\numberwithin{equation}{subsection}
\theoremstyle{plain}
\newtheorem*{theorem}{Theorem}
\newtheorem*{lemma}{Lemma}
\newtheorem*{proposition}{Proposition}
\newtheorem*{corollary}{Corollary}
\newcommand*{\inth}{\textstyle \int}
\theoremstyle{definition}
\newtheorem*{definition}{Definition}
\newtheorem*{example}{Example}
\newtheorem*{examples}{Examples}
\newtheorem*{remark}{Remark}
\newtheorem*{hypotheses}{Hypotheses}
\newtheorem*{remarks}{Remarks}
\newtheorem*{remarksA}{Remarks A}
\newtheorem*{remarksB}{Remarks B}
\newtheorem*{question}{Question}
\newcommand{\rank}{\operatorname{rank}}
 \DeclareMathOperator{\gldim}{gldim}
\DeclareMathOperator{\Ext}{Ext} 
\DeclareMathOperator{\prdim}{projdim}
 \DeclareMathOperator{\ann}{ann}
 \DeclareMathOperator{\Aut}{Aut}
\DeclareMathOperator{\injdim}{injdim}
\DeclareMathOperator{\GKdim}{GKdim} \DeclareMathOperator{\End}{End}
\DeclareMathOperator{\Hom}{Hom}
\newcommand{\C}{\mathbb{C}}
\DeclareMathOperator{\PIdeg}{{\mathrm{PI}-\mathrm{deg}}}
\begin{document}

\title[Hopf algebras of GK-dimension One]
{Prime regular Hopf Algebras of GK-dimension One}

\author{K.A. Brown and J.J. Zhang}

\address{Brown: Department of Mathematics,
University of Glasgow, Glasgow G12 8QW, UK}

\email{kab@maths.gla.ac.uk}

\address{zhang: Department of Mathematics, Box 354350,
University of Washington, Seattle, Washington 98195, USA}

\email{zhang@math.washington.edu}

\begin{abstract}
This paper constitutes the first part of a program to classify all
affine prime regular Hopf algebras $H$ of Gelfand-Kirillov dimension
one over an algebraically closed field of characteristic zero. We
prove a number of properties of such an algebra, list some classes
of examples, and then prove that - when the PI-degree of $H$ is
prime - our list contains all such algebras.
\end{abstract}

\subjclass[2000]{Primary 16E65, 16W30, 16P40; Secondary 16S30,
16S34, 16W35, 16R99 }


\keywords{Hopf algebra, Gelfand-Kirillov dimension, Artin-Schelter
regular, homological integral}


\maketitle


\setcounter{section}{-1}
\section{Introduction}
\label{yysec0}
\subsection{}
\label{yysec0.1}
Recent years have seen substantial progress in our
understanding of (infinite dimensional) noetherian Hopf algebras
\cite{BG1,LWZ,WZ1,BZ}. One noteworthy aspect of this development has
been the increasing role of homological algebra - for example, it
was shown in \cite{WZ1} that every affine noetherian Hopf algebra
satisfying a polynomial identity is Auslander-Gorenstein and
Cohen-Macaulay, and hence, by \cite[Lemma 6.1]{BZ}, is
Artin-Schelter (AS) Gorenstein; see (\ref{yysec1.2}) for the
definition of the latter condition.

Recall that, by fundamental results of Small, Stafford and Warfield
\cite{SSW}, a semiprime affine algebra of GK-dimension one is a
finite module over its center, which is affine and hence noetherian.
Given the homological developments mentioned above, it seems to us a
feasible target to determine \emph{all} prime affine Hopf algebras
of GK-dimension one. In this paper we begin the campaign by
considering the case where
\begin{quote} $H$ \textit{ is an affine noetherian Hopf algebra over
an algebraically closed field } $k$ \textit{ of characteristic 0,
and } $H$ \textit{ has GK-dimension one and is prime and regular.}
\end{quote}
Throughout, we'll say that $H$ satisfies hypotheses
($\mathbf{H}$) when the displayed conditions are in force. Here,
\textit{regular } means that $H$ has finite global
dimension, say $d$. In fact the Cohen-Macaulay property of $H$
forces $d=1$ here, so the objects of study are hereditary noetherian
prime rings.

\subsection{Examples}
\label{yysec0.2}
By a basic result in the theory of
algebraic groups \cite[Theorem 20.5]{Hu}, there are two
\emph{commutative} Hopf $k-$algebras which are domains of
GK-dimension one, namely $k[x]$ and $k[x^{\pm 1}]$, the coordinate
rings of $(k,+)$ and $(k\setminus \{0\},\times)$ respectively. Since
these groups are abelian, their coordinate rings are cocommutative;
if one retains the cocommutativity hypothesis, but drops
commutativity, then the only additional algebras satisfying
($\mathbf{H}$) which enter the arena are the group algebras
$k{\mathbb D}$ of the infinite dihedral group (\ref{yysec3.2}), and
the algebras which we call here the \emph{cocommutative Taft algebras}.
These, discussed in (\ref{yysec3.3}), are skew group algebras of a finite
cyclic group $\langle g \rangle$ of order $n$, acting on $k[x]$,
with $gxg^{-1} = \xi x,$ where $\xi$ is a primitive $n$th root of
one in $k.$

For each $n > 1$ there is a finite collection of
\emph{noncocommutative} Taft algebras $H(n,t,\xi)$, also defined in
(\ref{yysec3.3}), for $\xi$ a primitive $n$th root of one in $k.$
(Here, $k$ can be any field containing a primitive $n$th root of
unity.) The algebras $H(n,t,\xi)$ are well known, being factor
algebras of a Borel subalgebra of $U_{\xi}(\mathfrak{sl}(2,k)).$

The above are \emph{not}, however, the only examples of algebras
satisfying ($\mathbf{H}$). A further class of examples was
discovered by Liu \cite{Liu} in 2006: these are built by letting a
skew primitive element $y$ ``of finite index of nilpotency $n$'' act
on $k[x^{\pm 1}]$ by setting $yx = \xi xy$, and forming a
``non-split'' extension. Liu's algebras are recovered as a special
case of a larger class of algebras in (\ref{yysec3.4}), where we show
that his construction can be extended to include Hopf algebras built
over the group algebra $k(C_{\infty} \times C_b),$ where $b$ is an
arbitrary divisor of $n.$

In all the examples listed in this subsection, the PI-degree of
the algebra is $n$, (where, of course, $n=1$ for the commutative
algebras and $n=2$ for $k{\mathbb D}$).

\subsection{Prime PI-degree}
\label{yysec0.3}
It may be that the only algebras $H$
satisfying ($\mathbf{H}$) are those listed in (\ref{yysec0.2}),
but we cannot prove this at present. However it is a corollary of our
main result that
$$\begin{aligned}  \textit{if } H \textit{ satisfies }
(\mathbf{H}) \textit{ and the PI-degree }
 n \textit{ of } H \textit{ is prime, then } H \textit{ occurs in
(\ref{yysec0.2}).}
\end{aligned}$$
To give a more precise statement of our main result and some idea
of its proof, we must introduce some homological algebra.

\subsection{Integral order and integral minor}
\label{yysec0.4}
A fundamental tool for the study of algebras satisfying ($\mathbf{H}$)
is the \emph{homological integral} $\int^l_H$ defined in \cite{LWZ}.
This is the one-dimensional $k-$vector space and $H-$bimodule
$\Ext^1_H (k,H),$ where $k$ here denotes the trivial left
$H-$module. Let $\pi : H \rightarrow k$ be the algebra map given by
the \emph{right} structure of $\int^l_H$. As we recall in
(\ref{yysec2.1}), $\pi$ determines groups $G^l_{\pi}$ and
$G^r_{\pi}$ of left and right \emph{winding automorphisms} of $H$.
In our setting, and indeed for any affine Hopf algebra which is a
finite module over its center,
$$ n := |G^l_{\pi}| = |G^r_{\pi}| < \infty, $$
(Theorem \ref{yysec2.3}(c)); the integer $n$ is called the \emph{
integral order} of $H$, denoted $io(H).$

When $H$ is commutative, $\pi$ is - clearly - the counit $\epsilon,$
and so $io(H)=1$. So, speaking crudely, $io(H)$ is a measure of the
noncommutativity of $H$; and indeed it will become apparent from our
results that, for $H$ satisfying ($\mathbf{H}$), $io(H)=1$ only if
$H$ is commutative. Notice also that, when $H$ is cocommutative,
$G^l_{\pi}=G^r_{\pi}$; so we can encode the ``degree of
cocommutativity'' of $H$ by means of the index \begin{eqnarray}
im(H) :=|G^l_{\pi} : G^l_{\pi} \cap G^r_{\pi}|,
\label{greet}\end{eqnarray} which we call the \emph{integral minor}
of $H$, (\ref{yysec2.4}). Thus, when $H$ is cocommutative $im(H)=1,$
and, paralleling the corresponding statement for $io(H),$ it will
follow from results in \S 4 that, when $H$ satisfies ($\mathbf{H}$),
$im(H)=1$ only if $H$ is cocommutative.

The connection between the integral order and noncommutativity is in
fact more far-reaching than indicated above. By results of
\cite{LWZ} recalled in \S 2, for \emph{any} prime regular affine PI
Hopf $k-$algebra, the integral order is a lower bound for the
PI-degree; and
\begin{eqnarray} \textit{when } H \textit{ satisfies
}(\mathbf{H}), \quad io(H) =
\PIdeg(H).\label{ole}\end{eqnarray} In other words, for
algebras $H$ satisfying ($\mathbf{H}$) ``the non-commutativity of
the algebra is entirely encoded by the integral $\int^l_H$.''

\subsection{Main theorem}
\label{yysec0.5}
We can now state the main result of this paper:

\begin{theorem} Let $H$ be a Hopf algebra satisfying ($\mathbf{H}$), and
suppose that the integral minor $im(H)$ is either 1 or $io(H)$. Then
$H$ is one of the algebras listed in (\ref{yysec0.2}).
\end{theorem}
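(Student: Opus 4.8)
The plan is to classify $H$ satisfying ($\mathbf{H}$) by using the integral order $n=io(H)$ and the integral minor $im(H)$ as the principal invariants, and to build up the structure of $H$ from the interplay between the winding automorphism groups $G^l_\pi$ and $G^r_\pi$. Since the equality $io(H)=\PIdeg(H)$ holds here (equation (\ref{ole})), the case $n=1$ should force $H$ commutative, and hence — by the algebraic-groups input cited in (\ref{yysec0.2}) — $H$ is $k[x]$ or $k[x^{\pm 1}]$. So I would dispose of that base case first and then assume $n>1$.

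For $n>1$, the plan is to split according to the hypothesis on $im(H)$, since by assumption $im(H)\in\{1,n\}$. First I would treat the \emph{cocommutative-type} case $im(H)=1$, meaning $G^l_\pi=G^r_\pi$. Here the winding automorphisms are simultaneously left and right, which should pin down enough of the coalgebra structure to show $H$ is cocommutative (using the promised converse from \S4 that $im(H)=1$ forces cocommutativity). Once cocommutativity is in hand, $H$ is — up to the structure theory of cocommutative Hopf algebras over an algebraically closed field of characteristic zero — essentially a smash/skew-group construction over a group algebra and an enveloping algebra; combined with GK-dimension one and regularity, this should narrow the possibilities to $k{\mathbb D}$ and the cocommutative Taft algebras of (\ref{yysec3.2})--(\ref{yysec3.3}). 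The opposite extreme $im(H)=n=io(H)$ means $G^l_\pi\cap G^r_\pi$ is trivial, so the left and right winding groups meet only in the identity; this maximal ``non-cocommutativity'' should be exactly the hypothesis that produces the noncocommutative Taft algebras $H(n,t,\xi)$ and Liu's algebras (and their generalizations over $k(C_\infty\times C_b)$) of (\ref{yysec3.3})--(\ref{yysec3.4}). The strategy in each branch is: identify a central grouplike or skew-primitive generator coming from the winding-automorphism data, use the finite-module-over-center structure from \cite{SSW} to control the center, and then match the resulting presentation against the explicit list.

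Concretely, within each branch I would proceed as follows. Let $G=G^l_\pi$ act on $H$ by winding automorphisms; since $|G|=n$ and $H$ is a finite module over its center $Z$, I expect the invariant ring $H^{G}$ (or an analogous subalgebra) to be commutative of GK-dimension one, hence $k[x]$ or $k[x^{\pm1}]$ by the commutative classification. Then $H$ is recovered from this commutative ``base'' by an extension encoding the $G$-action and the skew-primitive/grouplike generators, and the hypotheses ($\mathbf{H}$) — primeness, regularity (forcing global dimension one), and GK-dimension one — restrict the admissible extensions to finitely many, matching the examples. The Hopf structure (counit, comultiplication on the skew-primitive element, the root-of-unity parameter $\xi$) is then forced by compatibility with $\pi$ and the requirement that the antipode be bijective.

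The hard part will be the case $im(H)=1$: showing that $G^l_\pi=G^r_\pi$ actually forces $H$ to be cocommutative, and then extracting the full structural classification from cocommutativity together with the numerical constraints. Cocommutativity is a statement about the comultiplication, whereas the winding-group data is extracted from the one-dimensional integral $\int^l_H$ and its algebra map $\pi$; bridging these requires the detailed analysis of \S4 relating $im(H)$ to the coalgebra structure, and then an honest invocation of the classification of cocommutative Hopf algebras specialized to GK-dimension one. The intermediate case, where $im(H)$ is neither $1$ nor $n$, is precisely what the present theorem excludes — which is why the hypothesis on $im(H)$ is imposed — and handling it is presumably the obstruction that prevents a full classification at this stage.
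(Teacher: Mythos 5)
Your skeleton (dispose of $io(H)=1$ via $io(H)=\PIdeg(H)$, split on $im(H)\in\{1,io(H)\}$, reduce each branch to the dichotomy $k[x]$ versus $k[x^{\pm 1}]$ for a commutative base subalgebra, then match generators and relations against the list) is indeed the paper's architecture, but at the two places where the real work happens the proposal substitutes a hope for an argument, and in the first branch your intended logic runs backwards. In the case $im(H)=1$ you plan to prove \emph{first} that $G^l_\pi=G^r_\pi$ forces cocommutativity and \emph{then} invoke the structure theory of cocommutative Hopf algebras; but you supply no mechanism for that implication, explicitly deferring it to ``the detailed analysis of \S 4'' -- which is exactly what has to be constructed. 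In the paper the order is reversed: one shows the common invariant ring $H_0=H^{G^l_\pi}=H^{G^r_\pi}$ is a Hopf subalgebra (Proposition \ref{yysec2.1}(h)), hence $k[x]$ or $k[x^{\pm1}]$; the strong $\widehat{G^l_\pi}$-grading (Proposition \ref{yysec2.2}) makes each component $H^l_i$ an invertible $H_0$-module, from which one extracts a unit $g$ with $H^l_i=g^iH_0$; the coideal property then forces $\Delta(g)=(g\otimes g)q$ with $q$ a unit of $H_0\otimes H_0$, whence $q=1$ and $g$ is group-like, and the presentation follows. Cocommutativity is a \emph{corollary} of the resulting classification, not a tool used to obtain it; as stated, your first branch has no proof.

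The second gap concerns your repeated step ``commutative of GK-dimension one, hence $k[x]$ or $k[x^{\pm1}]$ by the commutative classification.'' That classification is of commutative \emph{Hopf} algebras, so you must know the subalgebra in question is a Hopf subalgebra, not merely a commutative affine domain of GK-dimension one (otherwise, e.g., the coordinate ring of an elliptic curve is a counterexample to the dichotomy). When $im(H)=1$ this is Proposition \ref{yysec2.1}(h) and is cheap; but when $im(H)=io(H)$ the fixed rings $H^{G^l_\pi}$ and $H^{G^r_\pi}$ are genuinely \emph{not} Hopf subalgebras (see the Taft algebras, where $H^l_0=k[x]$ with $\Delta(x)=x\otimes g^t+1\otimes x$), and the needed statement -- that $H_0=H^{G^l_\pi}\cap H^{G^r_\pi}$ equals $Z(H)$ and is a Hopf subalgebra -- is Theorem \ref{yysec5.2}, whose proof consumes all of \S 5, including a nonzerodivisor argument in $Q(H)\otimes Q(H)$ resting on the fact that the tensor product of two field extensions of an algebraically closed field is a domain. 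Moreover, even granting that, your $im(H)=io(H)$ branch asserts without argument that the winding data produces group-like and skew-primitive generators of $H$; the paper obtains these by analyzing the finite-dimensional quotient $H_{tw}=H/(\ker\epsilon\cap H_0)H$, determining its coalgebra structure coefficient by coefficient, and then lifting group-likeness of $u_{11}$ and skew-primitivity of $u_{01}$ from $H_{tw}$ back to $H$ (\S\ref{yysec6.4}--\ref{yysec6.7}) -- this lifting is the technical core of the whole classification and is entirely absent from the proposal.
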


In view of (\ref{ole}) and (\ref{greet}), the corollary stated in
(\ref{yysec0.3}) follows.

It's worth noting that all the algebras appearing in
(\ref{yysec0.2}) are \emph{pointed}. We do not know whether this is
true for \emph{all} prime affine noetherian Hopf $k$-algebras of
GK-dimension one, nor whether there is a direct way to prove
pointedness, without proceeding first to list all the algebras.

The first steps in the proof of the theorem are valid in a much more
general setting: we show in \S 2, building mainly on work from
\cite{LWZ}, that if a Hopf algebra $A$ has finite integral order
$n$, where $n$ is a unit in $k,$ then $A$ is strongly graded by the
cyclic group of order $n$, in two different ways, with identity
components the invariant algebras $A^{G^l_{\pi}}$ and
$A^{G^r_{\pi}}.$ When $A$ is in addition regular affine semiprime
and has GK-dimension one, these two invariant algebras, and their
intersection, are affine Dedekind domains (Theorem \ref{yysec2.5}).

\S 4 covers the easy case of the main theorem, namely the case where
the Hopf algebra $H$ satisfies ($\mathbf{H}$) and $im(H)=1.$ Under
these hypotheses, $H_0 := H^{G^l_{\pi}}=H^{G^r_{\pi}}$ and this
Dedekind domain is a Hopf subalgebra of $H$. At this point a pivotal
dichotomy comes into play: $H_0$ must either be $k[x]$ - the
\emph{primitive case}; or $H_0$ is $k[x^{\pm 1}]$ - the
\emph{group-like case}. When $im(H)=1$ it's not hard to show that in
the primitive case $H$ is a cocommutative Taft algebra, and in the
group-like case it's the group algebra of the dihedral group.

The proof of the case $im(H)=io(H)$ of the main theorem occupies \S5
and \S 6. In \S 5 an analysis of algebras strongly graded by a
finite abelian group, with commutative component of degree 0,
culminates in the conclusion that, if $H$ satisfies ($\mathbf{H}$)
and $io(H) = im(H)$, then the Dedekind domain $H_0 := H^{G^l_{\pi}}
\cap H^{G^r_{\pi}}$ is the center of $H$ and is a Hopf subalgebra,
whence it is either $k[x]$ or $k[x^{\pm 1}].$ The dichotomy
mentioned in the previous paragraph therefore features again, and in
\S 6 this is exploited to show that only the algebras of \S 3 can
occur. This is achieved by first studying the finite dimensional
Hopf algebra $H/ (\ker \epsilon \cap H_0)H$, which we call the
\emph{twistor} of $H$, and then lifting its structure to describe
$H$ itself.

Finally, in \S 7, we discuss a number of questions and conjectures
arising from this work.

\bigskip

\section{General preparations}
\label{yysec1}

\subsection{Standard notation}
\label{yysec1.1} Throughout, $k$ will denote an algebraically closed
field. Unless stated otherwise all vector spaces are over $k$, and
an algebra or a ring $A$ always means a $k$-algebra with associative
multiplication $m_A$ and with unit 1. All $A$-modules will be by
default left modules. Let $A^{\sf op}$ denote the opposite algebra
of $A$.

When $A$ is a Hopf algebra we shall use the symbols $\Delta$,
$\epsilon$ and $S$ respectively for its coproduct, counit and
antipode. The coproduct of $a \in A$ will be denoted by
$\Delta(a)=\sum a_1 \otimes a_2$. The dual Hopf algebra of $A$ will
be denoted by $A^{\circ}$. For background regarding Hopf algebras,
see for example \cite{Mo}.

\subsection{Artin-Schelter condition}
\label{yysec1.2} While our main interest is in Hopf algebras, the
following two key definitions apply more generally to any noetherian
\emph{augmented $k$-algebra} - that is, a noetherian $k-$algebra $A$
with a fixed augmentation $\epsilon : A \longrightarrow k$.

\begin{definition}
Let $(A, \epsilon)$ be an augmented noetherian algebra.
\begin{enumerate}
\item
We shall say $A$ has {\it finite injective dimension} if the
injective dimensions of $_AA$ and $A_A$, $\injdim _AA$ and
$\injdim A_A$, are both finite. In this case these integers
are equal by \cite{Za}, and we write $d$ for the common value.
We say $A$ is {\it regular} if it has finite global dimension,
$\gldim_AA < \infty$. Right global dimension always equals
left global dimension \cite[Exercise 4.1.1]{We}; and, when
finite, the global dimension equals the injective dimension.
\item
Write $k$ for the trivial $A$-module $A/\ker \epsilon$. Then
$A$ is {\it Artin-Schelter Gorenstein}, which we usually
abbreviate to {\it AS-Gorenstein}, if
\begin{enumerate}
\item[(AS1)]
$\injdim {_AA}=d<\infty$,
\item[(AS2)]
$\dim_k \Ext^d_A({_Ak},{_AA})=1$ and $\Ext^i_A({_Ak},{_AA})=0$
for all $i\neq d$,
\item[(AS3)]
the right $A$-module versions of (AS1,AS2) hold.
\end{enumerate}
\item
If, further, $\gldim A=d$, then $A$ is called {\it
Artin-Schelter regular}, usually shortened to {\it AS-regular}.
\end{enumerate}
\end{definition}

\subsection{Homological integrals}
\label{yysec1.3}
Here is the natural extension to augmented algebras of a
definition recently given in \cite{LWZ} for Hopf algebras. This
definition generalises a familiar concept from the case of a
finite dimensional Hopf algebra \cite[Definition 2.1.1]{Mo}).

\begin{definition}
Let $(A, \epsilon)$ be a noetherian augmented algebra, and suppose
$A$ is AS-Gorenstein of injective dimension $d$. Any nonzero
element of the one-dimensional $A-$bimodule $\Ext^d_A({_Ak},{_AA})$
is called a {\it left homological integral} of $A$. We denote
$\Ext^d_A({_Ak},{_AA})$ by $\inth^l_A$. Any nonzero element in
$\Ext^d_{A^{\sf op}}({k_A},{A_A})$ is called a {\it right
homological integral} of $A$. We write $\inth^{r}_A=
\Ext^d_{A^{\sf op}}({k_A},{A_A})$. Abusing language slightly, we
shall also call $\inth^l_A$ and $\inth^r_A$ the left and the right
homological integrals of $A$ respectively. When no confusion as
to the algebra in question seems likely, we'll simply write
$\inth^l$ and $\inth^r$ respectively.
\end{definition}

\section{Classical Components}
\label{yysec2}
Although our primary goal is to classify affine prime
regular Hopf $k$-algebras of GK-dimension one, we give in this
section some preliminary results in a more general setting than is
strictly needed for this classification, in the hope that some of
these ideas will prove useful in future work. Thus, throughout \S
\ref{yysec2}, $A$ will be an arbitrary Hopf $k$-algebra.

\subsection{Hopf subalgebras associated to winding automorphisms}
\label{yysec2.1}
Given an algebra homomorphism $\pi: A\to k$, we write $\Xi^l_\pi$
for the left winding automorphism of $A$ associated to $\pi$, namely,
$$\Xi^l_{\pi}(a)=\sum \pi(a_1)a_2$$
for all $a\in A;$ and we use $\Xi^r_{\pi}$ for the right winding
automorphism of $A$ associated to $\pi,$ defined for $a \in A$ by
$$\Xi^r_{\pi}(a)=\sum a_1\pi(a_2).$$
Fix now a set $\Pi$ of algebra homomorphisms $\pi$ from $A$ to $k$.
Let $G^l_{\Pi}$ [resp. $G^r_{\Pi}$] be the subgroup of
$\Aut_{k-\mathrm{alg}}(A)$ generated by $\{\Xi^l_{\pi}: \pi \in \Pi \}$
[resp. $\{\Xi^r_{\pi}: \pi \in \Pi \}$]. Set
$G_{\Pi} := \langle G^l_{\Pi}, G^r_{\Pi} \rangle \subseteq
\Aut_{k-\mathrm{alg}}(A)$. (In applications, $A$ will be
AS-Gorenstein and we will take $\Pi=\{\pi\}$ with $\pi$ the map
$A\to A/\mathrm{r}.\ann(\int^l)$.)

Let $A_{\Pi}$ [resp. $A^l_{\Pi}$, $A^r_{\Pi}$] be the subalgebra of
invariants $A^{G_{\Pi}}$ [resp. $A^{G^l_{\Pi}}$, $A^{G^r_{\Pi}}$].
When $\Pi$ consists of a single map, $\Pi = \{ \pi \},$ we'll write
$A_{\pi}$, $A^l_{\pi}$, $A^r_{\pi}$,  and $G_{\pi}$, $G^l_{\pi}$,
$G^r_{\pi}$, for these subalgebras of $A$ and subgroups of
$\mathrm{Aut}_{k-\mathrm{alg}}(A)$.

The action of $G^l_{\Pi}$ is the restriction to the group-like
elements $\langle \Pi \rangle \subseteq A^\circ$ of the \emph{right}
hit action of $A^\circ$ on $A$; and correspondingly for $G^r_{\Pi}$
(see e.g. \cite[1.6.5 and 1.6.6]{Mo}).

\begin{proposition}
Retain the above notation, and let $\pi,\pi'\in \Pi$.
\begin{enumerate}
\item
$A^l_{\Pi}$, $A^r_{\Pi}$ and $A_{\Pi}$ are subalgebras of
$A$ and $A_{\Pi}=A^l_{\Pi}\cap A^r_{\Pi}$.
\item
If $A$ is cocommutative, then $A^l_{\Pi}=A^r_{\Pi}=A_{\Pi}$.
\item
$(\Xi^l_{\pi}\otimes Id_A)\Delta=\Delta \Xi^l_{\pi}$ and
$(Id_A\otimes \Xi^r_{\pi})\Delta=\Delta \Xi^r_{\pi}$. Thus
$A^l_{\Pi}$ is a right coideal subalgebra of $A$ and $A^r_{\Pi}$
is a left coideal subalgebra of $A$.
\item
$\Xi^l_{\pi}\; \Xi^r_{\pi'}=\Xi^r_{\pi'}\; \Xi^l_{\pi}$. In
particular, $G_{\Pi}$ is abelian if and only if $G^l_{\Pi}$ and
$G^r_{\Pi}$ are abelian.
\item
$\Xi^r_{\pi}\; S=S\; (\Xi^l_{\pi})^{-1}$. Consequently, $S(A^l_{\Pi})
\subseteq A^r_{\Pi}$ and $S(A^r_{\Pi})\subseteq A^l_{\Pi}$.
\item
$(Id_A\otimes \Xi^l_{\pi})\Delta=(\Xi^r_{\pi}\otimes Id_A) \Delta$.
Thus $\Delta(a)\in A\otimes A^l_{\Pi}$ if and only if
$\Delta(a)\in A^r_{\Pi}\otimes A$ if and only if $\Delta(a)\in
A^r_{\Pi}\otimes A^l_{\Pi}$.
\item
Let $\widetilde{A_{\Pi}}:=\{a\in A\;|\; \Delta(a)\in A_{\Pi}
\otimes A_{\Pi}\}$. Then $\widetilde{A_{\Pi}}$ is also equal to
$$
\{a\in A\;|\; \Delta(a)\in A^l_{\Pi}\otimes A^l_{\Pi}\},$$
$$\{a\in A\;|\; \Delta(a)\in A^r_{\Pi}\otimes A^r_{\Pi}\},$$
and
$$\{a\in A\;|\; \Delta(a)\in A^r_{\Pi}\otimes A^l_{\Pi}\}.$$
\item
$\widetilde{A_{\Pi}}$ is a Hopf subalgebra of $A$. In fact,
$\widetilde{A_{\Pi}}$ is the largest Hopf subalgebra of
$A^l_{\Pi}$, of $A^r_{\Pi}$ and of $A_{\Pi}$. If $A^l_{\Pi}=
A^r_{\Pi}$ (and so in particular if $A$ is cocommutative), then
$\widetilde{A_{\Pi}}=A_{\Pi}$.
\end{enumerate}
\end{proposition}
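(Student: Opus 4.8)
The plan is to check directly that $\widetilde{A_{\Pi}}$ satisfies the four closure conditions defining a Hopf subalgebra — that it is a unital subalgebra, a subcoalgebra, and stable under $S$ — and then to read off the maximality and collapse assertions as formal consequences of part (g). The subalgebra part is routine: $\Delta(1)=1\otimes 1\in A_{\Pi}\otimes A_{\Pi}$, and since $\Delta$ is an algebra map and $A_{\Pi}$ is a subalgebra by part (a), $\Delta(ab)=\Delta(a)\Delta(b)\in A_{\Pi}\otimes A_{\Pi}$ for $a,b\in\widetilde{A_{\Pi}}$. Applying $\epsilon\otimes\id$ to $\Delta(a)\in A_{\Pi}\otimes A_{\Pi}$ recovers $a$ and places it in $A_{\Pi}$, so $\widetilde{A_{\Pi}}\subseteq A_{\Pi}\subseteq A^{l}_{\Pi},A^{r}_{\Pi}$; this containment will be needed at the end.

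The substantive step is the subcoalgebra property $\Delta(\widetilde{A_{\Pi}})\subseteq\widetilde{A_{\Pi}}\otimes\widetilde{A_{\Pi}}$, and I expect it to be the main obstacle, since it is the only place where the precise interplay of the coideal structures from (c) and the characterisations in (g) is essential. For $a\in\widetilde{A_{\Pi}}$ I would use part (g) to write $\Delta(a)=\sum_i b_i\otimes c_i$ with $b_i\in A^{r}_{\Pi}$, $c_i\in A^{l}_{\Pi}$, and $\{b_i\}$ linearly independent, and pick functionals $g_j\in A^{*}$ with $g_j(b_i)=\delta_{ij}$. Applying $g_j\otimes\id\otimes\id$ to the coassociativity identity $\sum_i\Delta(b_i)\otimes c_i=\sum_i b_i\otimes\Delta(c_i)$ yields $\Delta(c_j)\in A\otimes A^{l}_{\Pi}$; combined with the right-coideal property $\Delta(A^{l}_{\Pi})\subseteq A^{l}_{\Pi}\otimes A$ from part (c) and the elementary identity $(V\otimes A)\cap(A\otimes V)=V\otimes V$ valid for any subspace $V\subseteq A$, this forces $\Delta(c_j)\in A^{l}_{\Pi}\otimes A^{l}_{\Pi}$, so $c_j\in\widetilde{A_{\Pi}}$ by (g). The symmetric computation, taking $\{c_i\}$ independent and using the left-coideal property $\Delta(A^{r}_{\Pi})\subseteq A\otimes A^{r}_{\Pi}$, gives $\Delta(b_j)\in A^{r}_{\Pi}\otimes A^{r}_{\Pi}$, whence $b_j\in\widetilde{A_{\Pi}}$. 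Thus $\Delta(a)\in\widetilde{A_{\Pi}}\otimes\widetilde{A_{\Pi}}$.

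For antipode-stability I would write $\Delta(a)=\sum_i p_i\otimes q_i$ with $p_i,q_i\in A_{\Pi}$ and use that $S$ is an anti-coalgebra map, $\Delta(S(a))=\sum_i S(q_i)\otimes S(p_i)$. Since $q_i\in A_{\Pi}\subseteq A^{l}_{\Pi}$ and $p_i\in A_{\Pi}\subseteq A^{r}_{\Pi}$, part (e) gives $S(q_i)\in A^{r}_{\Pi}$ and $S(p_i)\in A^{l}_{\Pi}$, so $\Delta(S(a))\in A^{r}_{\Pi}\otimes A^{l}_{\Pi}$, and (g) yields $S(a)\in\widetilde{A_{\Pi}}$. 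Together with the previous two steps this establishes that $\widetilde{A_{\Pi}}$ is a Hopf subalgebra of $A$.

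Finally, the maximality and collapse claims are formal. If $K$ is any Hopf subalgebra with $K\subseteq A^{l}_{\Pi}$ (respectively $A^{r}_{\Pi}$, $A_{\Pi}$), then $\Delta(K)\subseteq K\otimes K\subseteq A^{l}_{\Pi}\otimes A^{l}_{\Pi}$ (respectively the corresponding product), so (g) forces $K\subseteq\widetilde{A_{\Pi}}$; combined with $\widetilde{A_{\Pi}}\subseteq A_{\Pi}$ from the first paragraph, this identifies $\widetilde{A_{\Pi}}$ as the largest Hopf subalgebra inside each of the three invariant algebras. When $A^{l}_{\Pi}=A^{r}_{\Pi}$ we have $A_{\Pi}=A^{l}_{\Pi}=A^{r}_{\Pi}$, and for $a\in A_{\Pi}$ part (c) gives $\Delta(a)\in(A^{l}_{\Pi}\otimes A)\cap(A\otimes A^{r}_{\Pi})=A_{\Pi}\otimes A_{\Pi}$ by the same intersection identity; hence $A_{\Pi}\subseteq\widetilde{A_{\Pi}}$ and therefore $A_{\Pi}=\widetilde{A_{\Pi}}$. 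The cocommutative case is then immediate, since part (b) guarantees $A^{l}_{\Pi}=A^{r}_{\Pi}$.
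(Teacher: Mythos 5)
Your proposal addresses only part (h) of the proposition, and that is the genuine gap: the statement to be proved comprises parts (a)--(h), and every nontrivial earlier part is invoked in your argument as a known fact rather than proved. You cite (a) for $A_{\Pi}$ being a subalgebra, (b) for the cocommutative case, (c) for the coideal inclusions $\Delta(A^l_{\Pi})\subseteq A^l_{\Pi}\otimes A$ and $\Delta(A^r_{\Pi})\subseteq A\otimes A^r_{\Pi}$, (e) for $S(A^l_{\Pi})\subseteq A^r_{\Pi}$ and $S(A^r_{\Pi})\subseteq A^l_{\Pi}$, and (g), repeatedly, for the equivalence of the four descriptions of $\widetilde{A_{\Pi}}$. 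None of these is free of content: (c) needs the computation $(\Xi^l_{\pi}\otimes \id)\Delta=\Delta\Xi^l_{\pi}$ coming from coassociativity and $\Xi^l_{\pi}=m_A(\pi\otimes \id)\Delta$; (e) needs the anti-coalgebra property of $S$; (g) needs part (f) (itself a separate computation) together with the counit argument showing that $\Delta(a)\in A^r_{\Pi}\otimes A^l_{\Pi}$ already forces $a\in A^r_{\Pi}\cap A^l_{\Pi}=A_{\Pi}$. Using (a)--(g) inside the proof of (h) is of course legitimate --- the paper's own proof of (h) does exactly that --- but a complete proof of the statement must also supply proofs of those parts, and yours does not.

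For part (h) itself your argument is correct and is essentially the paper's: the counit trick gives $\widetilde{A_{\Pi}}\subseteq A_{\Pi}$; coassociativity plus the coideal inclusions of (c) plus the subspace identity $(V\otimes A)\cap(A\otimes W)=V\otimes W$ give the subcoalgebra property; (e) gives stability under $S$; and maximality and the collapse when $A^l_{\Pi}=A^r_{\Pi}$ are read off from (g) and (c). The only difference is bookkeeping: you run the coassociativity argument on the presentation $\Delta(a)\in A^r_{\Pi}\otimes A^l_{\Pi}$ with explicit dual functionals, while the paper works with $\Delta(a)\in A_{\Pi}\otimes A_{\Pi}$ and the three-fold intersection $(A^l_{\Pi}\otimes A^r_{\Pi}\otimes A_{\Pi})\cap(A_{\Pi}\otimes A^l_{\Pi}\otimes A^r_{\Pi})=A_{\Pi}\otimes A_{\Pi}\otimes A_{\Pi}$; these are the same device. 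One small repair is needed at the end of that step: to conclude $\Delta(a)\in\widetilde{A_{\Pi}}\otimes\widetilde{A_{\Pi}}$ you should either fix a single decomposition of minimal length, in which both the $b_i$ and the $c_i$ are automatically linearly independent, or combine your two one-sided conclusions $\Delta(a)\in A\otimes\widetilde{A_{\Pi}}$ and $\Delta(a)\in\widetilde{A_{\Pi}}\otimes A$ via the same intersection identity; as written, you appeal to two possibly different decompositions of $\Delta(a)$.
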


\begin{proof} (a,b) Clear.

(c) Let $\mu$ denote the multiplication map $A \otimes A
\longrightarrow A.$ Since $\Xi_{\pi}^l=\mu(\pi\otimes Id_A)\Delta$,
$$
\begin{aligned}
(\Xi_{\pi}^l\otimes Id_A)\Delta
&=(\mu \otimes Id_A)(\pi\otimes Id_A\otimes Id_A)(\Delta\otimes Id_A)\Delta\\
&=(\mu \otimes Id_A)(\pi\otimes Id_A\otimes Id_A)(Id_A\otimes \Delta)\Delta\\
&=(\mu \otimes Id_A)(Id_k\otimes \Delta) (\pi\otimes Id_A)\Delta\\
&=\Delta \Xi_{\pi}^l.
\end{aligned}
$$
Similarly, $(Id_A\otimes \Xi_{\pi}^r)\Delta=\Delta\Xi_{\pi}^r$.
Let $a\in A^l_{\Pi}$. Then, for all $\pi \in \Pi,$
$\Xi_{\pi}^l(a)=a,$ and hence
$$(\Xi_{\pi}^l\otimes Id_A)\Delta(a)=\Delta \Xi_{\pi}^l(a)
=\Delta(a).$$ Thus $\Delta(a)\in A^l_{\Pi}\otimes A$ and hence
$\Delta(A^l_{\Pi})\subset  A^l_{\Pi}\otimes A$. Similarly,
$\Delta(A^r_{\Pi})\subset  A\otimes A^r_{\Pi}$.

(d) Clear.

(e) Note that $(\Xi_{\pi}^l)^{-1}(a)=\sum \pi S(a_1)a_2$ for all
$a\in A$. Hence, using \cite[1.5.10]{Mo},
$$\Xi_{\pi}^r S(a)=\sum S(a_2)\pi S(a_1)=S(\sum \pi S(a_1)a_2)=
S(\Xi_{\pi}^l)^{-1}(a).$$ Thus $S(A^l_{\Pi})\subseteq A^r_{\Pi}$.
Similarly, $S(A^r_{\Pi})\subseteq A^l_{\Pi}$.

(f) For $a\in A$,
$$(Id_A\otimes \Xi^l_{\pi})\Delta(a)=
\sum a_1 \otimes \pi(a_2)a_3= \sum a_1\pi(a_2)\otimes a_3=
=(\Xi^r_{\pi}\otimes Id_A) \Delta(a),$$ proving the first
statement. Now $\Delta(a) \in A\otimes A^l_{\Pi}$ if and only if
$(Id_A\otimes \Xi^l_{\pi})\Delta(a)=\Delta(a)$ for all $\pi \in
\Pi$. By the first assertion, this happens if and only if
$(\Xi^r_{\pi}\otimes Id_A) \Delta(a)=\Delta(a)$, or, equivalently,
$\Delta(a)\in A^r_{\Pi}\otimes A$. Using the elementary remark
that, for subspaces $A$ and $B$ of a vector space $C$, the
intersection of $A \otimes C$ and $C \otimes B$ in $C \otimes C$
is $A \otimes B$ to combine these equivalent statements, one sees
that they occur precisely when $\Delta(a)\in A^r_{\Pi}\otimes
A^l_{\Pi}$.

(g) Let $B_1=\{a\in A\;|\; \Delta(a)\in A^l_{\Pi}\otimes
A^l_{\Pi}\}$, $B_2=\{a\in A\;|\; \Delta(a)\in A^r_{\Pi}\otimes
A^r_{\Pi}\}$ and $B_3=\{a\in A\;|\; \Delta(a)\in A^r_{\Pi}\otimes
A^l_{\Pi}\}$.

Clearly $\widetilde{A_{\Pi}}$ is a subset of $B_i$ for $i=1,2,3$.
By (f), $B_1$ and $B_2$ are subsets of $B_3$. It remains to show
that $B_3\subset \widetilde{A_{\Pi}}$.

Suppose that $a\in B_3.$ Then $\Delta(a)=\sum a_1\otimes a_2$ for
some $a_1\in A^r_{\Pi},a_2\in A^l_{\Pi}$. Consequently,
\begin{eqnarray}a=\sum \epsilon(a_1)a_2=\sum a_1\epsilon(a_2)\in
A^r_{\Pi}\cap A^l_{\Pi}=A_{\Pi}.\label{wow}\end{eqnarray} By (c),
$$\Delta(a)\in (A\otimes A^r_{\Pi})\cap
(A^l_{\Pi}\otimes A)\cap (A^r_{\Pi}\otimes A^l_{\Pi})=
A_{\Pi}\otimes A_{\Pi}.$$ Thus $a\in \widetilde{A_{\Pi}}$.

(h) If $b \in \widetilde{A_{\Pi}}$ then $b \in A_{\Pi}$ by
(\ref{wow}).

Since $A_{\Pi}$ is a subalgebra of $A$, $\widetilde{A_{\Pi}}$ is
also a subalgebra of $A$. Since $S(A_{\Pi})\subseteq A_{\Pi}$,
$S(\widetilde{A_{\Pi}})\subseteq \widetilde{A_{\Pi}}$. To prove
$\widetilde{A_{\Pi}}$ is a Hopf subalgebra it remains to show
that $\widetilde{A_{\Pi}}$ is a subcoalgebra.

Let $a\in \widetilde{A_{\Pi}},$ so that $\Delta(a)=\sum a_1\otimes
a_2 \in A_{\Pi}\otimes A_{\Pi}$. Since $\Delta(A_{\Pi})\subset
A^l_{\Pi}\otimes A^r_{\Pi}$ and $(Id_A\otimes
\Delta)\Delta(a)=(\Delta\otimes Id_A)\Delta(a)$,
$$(Id_A\otimes \Delta)\Delta(a) \in
(A^l_{\Pi}\otimes A^r_{\Pi}\otimes A_{\Pi})\cap
(A_{\Pi}\otimes A^l_{\Pi}\otimes A^r_{\Pi})
=A_{\Pi}\otimes A_{\Pi}\otimes A_{\Pi}.$$
Thus $\Delta(a_1),\Delta(a_2)\in A_{\Pi}\otimes A_{\Pi}$.
Therefore, $a_1,a_2\in \widetilde{A_{\Pi}}$ and
$\widetilde{A_{\Pi}}$ is a coalgebra.

The claims in the second sentence follow from the first part
together with (g).

Suppose finally that $A^l_{\Pi} = A^r_{\Pi}.$ Then by part (c)
$A_{\Pi}=A^l_{\Pi}=A^r_{\Pi}$ is a sub-coalgebra of $A$. Also by
part (e), $S(A_{\Pi})\subseteq A_{\Pi}$. Hence $A_{\Pi}$ is a
Hopf subalgebra of $A$. Thus $\widetilde{A_{\Pi}}=A_{\Pi}$.
\end{proof}

Write $G(A^\circ)$ for the group of all group-like elements in
$A^\circ$. So $G(A^\circ)$ is the character group of $A;$ that is,
by definition, it is $\Hom_{k{\text{-}}\mathrm{alg}}(A,k),$
furnished with the convolution product $*,$ \cite[I.9.24]{BG2}.
We record the following obvious facts:

\begin{lemma}
Suppose $\langle \Pi \rangle$ is a finite abelian subgroup of $G(A^\circ)$.
\begin{enumerate}
\item
$G^r_{\Pi}$ and $G^l_{\Pi}$ are finite abelian groups, isomorphic
to $\langle \Pi \rangle$.
\item
$G_{\Pi}$ is a factor of the direct product of $G^l_{\Pi}$
and $G^r_{\Pi}$, and in particular is abelian of order at
most $|G^l_{\Pi}|^2 = |\langle \Pi \rangle|^2.$
\item
If the order $|\langle \Pi \rangle|$ is coprime to the
characteristic $p$ of $k$ when $p > 0$, then the character
group $\widehat{G^l_{\Pi}}$ is isomorphic to $G^l_{\Pi}$.
\end{enumerate}
\end{lemma}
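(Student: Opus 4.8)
The plan is to realise both $G^l_{\Pi}$ and $G^r_{\Pi}$ as homomorphic images of the finite abelian group $\langle \Pi \rangle$ under the assignments $\pi \mapsto \Xi^l_{\pi}$ and $\pi \mapsto \Xi^r_{\pi}$, and then to read off (a)--(c) from elementary group theory. The one computational step is to record how winding automorphisms compose. Writing $\Xi^l_{\pi} = \mu(\pi \otimes \id)\Delta$ as in the proof of part (c) of the Proposition and using coassociativity, a direct calculation gives $\Xi^l_{\pi'}\Xi^l_{\pi} = \Xi^l_{\pi * \pi'}$, where $*$ is the convolution product on $G(A^\circ)$; similarly $\Xi^r_{\pi'}\Xi^r_{\pi} = \Xi^r_{\pi' * \pi}$. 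Thus $\pi \mapsto \Xi^l_{\pi}$ is an anti-homomorphism and $\pi \mapsto \Xi^r_{\pi}$ a homomorphism from $(\langle \Pi\rangle, *)$ to $(\Aut_{k-\mathrm{alg}}(A), \circ)$; since $\langle \Pi \rangle$ is abelian, both are genuine group homomorphisms, and their images are precisely $G^l_{\Pi}$ and $G^r_{\Pi}$.

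For (a) it remains to check injectivity. If $\Xi^l_{\pi} = \id$, then applying $\epsilon$ to $\Xi^l_{\pi}(a) = \sum \pi(a_1)a_2$ and using the counit axiom yields $\pi(a) = \epsilon(a)$ for every $a$, i.e. $\pi = \epsilon$, which is the identity of $G(A^\circ)$; the same argument works on the right. Hence both assignments are injective, so $G^l_{\Pi} \cong \langle \Pi\rangle \cong G^r_{\Pi}$, and in particular both are finite abelian.

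Part (b) is immediate from part (d) of the Proposition, which tells us that $\Xi^l_{\pi}$ and $\Xi^r_{\pi'}$ commute for all $\pi, \pi' \in \Pi$; consequently every element of $G^l_{\Pi}$ commutes with every element of $G^r_{\Pi}$. The multiplication map $G^l_{\Pi} \times G^r_{\Pi} \to G_{\Pi}$, $(g,h) \mapsto gh$, is therefore a group homomorphism, and it is surjective because $G_{\Pi} = \langle G^l_{\Pi}, G^r_{\Pi}\rangle$. Thus $G_{\Pi}$ is a factor of $G^l_{\Pi} \times G^r_{\Pi}$, hence abelian of order at most $|G^l_{\Pi}|\,|G^r_{\Pi}| = |\langle \Pi\rangle|^2$.

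Finally, (c) is pure character theory: for a finite abelian group $G$ whose order is invertible in $k$, the dual $\widehat{G} = \Hom(G, k^{\times})$ is (non-canonically) isomorphic to $G$, since $k$ is algebraically closed and hence contains a full set of $|G|$-th roots of unity. Applying this with $G = G^l_{\Pi}$ gives the claim. The only point needing care throughout is the bookkeeping in the composition identity of the first paragraph --- in particular, keeping straight which of the two assignments is a homomorphism and which an anti-homomorphism --- but the abelianness of $\langle\Pi\rangle$ renders that distinction harmless, so there is no genuine obstacle.
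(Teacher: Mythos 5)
Your proof is correct and follows essentially the same route as the paper: the paper's proof also rests on the observation that $\pi \mapsto \Xi^r_{\pi}$ is a group monomorphism and $\pi \mapsto \Xi^l_{\pi}$ a group anti-monomorphism from $G(A^\circ)$ to $\Aut_{k\text{-}\mathrm{alg}}(A)$ (stated there as "easy to check"), deduces (b) from the commutation in Proposition \ref{yysec2.1}(d), and cites (c) as standard. You have merely supplied the convolution computation and the injectivity argument that the paper leaves to the reader, and both are carried out correctly.
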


\begin{proof} (a) It is easy to check that the maps from
$G(A^\circ)$ to $\Aut_{k-\mathrm{alg}}(A)$ given by
$\pi \mapsto \Xi^r_{\pi}$ and $\pi \mapsto \Xi^l_{\pi}$
are, respectively, a group monomorphism and a group
anti-monomorphism. Thus the assertion follows.

(b) This is immediate from Proposition \ref{yysec2.1}(d).

(c) This is standard.
\end{proof}

\subsection{Strongly graded property}
\label{yysec2.2}
Recall our standing hypotheses for $\S$ 2. In addition we assume
in this paragraph and for the remainder of this paper that $S$ is
bijective; since this holds when $A$ is semiprime or satisfies a
polynomial identity \cite{Sk}, it will be the case in the
applications we have in mind. Moreover, we assume in this paragraph
that
\begin{eqnarray}
\langle \Pi \rangle  \textit{ is a finite abelian subgroup of }
G(A^\circ),
\label{yyE2.2.1}
\end{eqnarray}
and that
\begin{eqnarray}
|\langle \Pi \rangle|
\textit{ is coprime to the characteristic } p \textit{ of } k,
\textit{ when } p > 0.
\label{yyE2.2.2}
 \end{eqnarray}

It will be convenient to write  $\widehat{G^l_{\Pi}}$ additively.
For each $\chi\in \widehat{G^l_{\Pi}}$, put
$$A^l_{\Pi,\chi}:=\{a\in A\;|\; g(a)=\chi(g)a, \forall\; g
\in G^l_{\Pi}\}.$$
Then $A$ is a $\widehat{G^l_{\Pi}}$-graded algebra:
$A=\oplus_{\chi\in\widehat{G^l_{\Pi}}} A^l_{\Pi,\chi}$. In
particular $A^l_{\Pi,0}$ consists of the $G^l_{\Pi}$-invariants
of $A$, which we denoted by $A^l_{\Pi}$ in \S \ref{yysec2.1}.
Similarly, $A$ is $G^r_{\Pi}-$graded, with components
$A^r_{\Pi,\chi}$ for $\chi \in \widehat{G^r_{\Pi}}$. The following
is well-known:

\begin{lemma}
Let $G$ be a finite abelian group acting faithfully and semisimply
on an algebra $A$, so $A$ is $\widehat{G}-$graded,
$A=\oplus_{\chi\in \widehat{G}}A_{\chi}$.  If, for any $A_\chi\neq 0$
and $A_{\chi'}\neq 0$, $A_\chi A_{\chi'}\neq 0$ (e.g., $A$ is a
$\widehat{G}$-graded domain), then $A_{\chi} \neq 0$ for all
$\chi \in \widehat{G}$.
\end{lemma}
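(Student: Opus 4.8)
The plan is to study the support $S := \{\chi \in \widehat{G} : A_\chi \neq 0\}$ of the grading and to show that it is all of $\widehat{G}$. First I would record two elementary facts. Since the unit $1$ is $G$-invariant, it lies in $A_0$, so $0 \in S$ and $S$ is nonempty. Next, because each $g \in G$ acts as an algebra automorphism, the grading is multiplicative: for $a \in A_\chi$ and $b \in A_{\chi'}$ one has $g(ab) = g(a)g(b) = \chi(g)\chi'(g)\,ab$, so $A_\chi A_{\chi'} \subseteq A_{\chi+\chi'}$ (writing $\widehat{G}$ additively). The hypothesis now enters exactly here: if $\chi, \chi' \in S$ then $A_\chi A_{\chi'} \neq 0$, whence $A_{\chi+\chi'} \neq 0$, i.e. $\chi + \chi' \in S$. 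Thus $S$ is closed under addition, and so is a submonoid of $\widehat{G}$.

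The second step is to upgrade the submonoid $S$ to a subgroup, which is immediate from finiteness: any $\chi \in S$ has finite order $m$ in $\widehat{G}$, and closure under addition gives $-\chi = (m-1)\chi \in S$ (this being $0 \in S$ in the trivial case $m=1$). Hence $S$ is a subgroup of $\widehat{G}$.

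It remains to rule out $S$ being a proper subgroup, and this is where faithfulness is used — the step I expect to be the crux. Suppose $S \subsetneq \widehat{G}$. Since $k$ is algebraically closed and, by the semisimplicity hypothesis together with the standing coprimality assumption, $|G|$ is invertible in $k$, the pairing $G \times \widehat{G} \to k^{\times}$ is perfect; consequently the annihilator $S^{\perp} := \{g \in G : \chi(g) = 1 \text{ for all } \chi \in S\}$ has order $|G|/|S| > 1$ and so contains some $g \neq 1$. But $A = \bigoplus_{\chi \in S} A_\chi$, since $A_\chi = 0$ for $\chi \notin S$, and on each summand $g$ acts by the scalar $\chi(g) = 1$; thus $g$ acts as the identity on all of $A$, contradicting faithfulness of the action. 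Therefore $S = \widehat{G}$, which is precisely the assertion. The only delicate point is this duality input — one must be sure the character pairing is perfect — but that is exactly what the semisimplicity hypothesis, combined with $k$ being algebraically closed of the appropriate characteristic, guarantees.
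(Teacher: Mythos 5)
Your proof is correct and follows essentially the same route as the paper's: both identify the support of the grading, show it is a subgroup of $\widehat{G}$ using closure under multiplication plus finiteness, and then derive a contradiction with faithfulness via the annihilator of the support. Your write-up merely makes explicit the duality and subgroup steps that the paper leaves terse.
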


\begin{proof}
Let $K=\{\chi\in \widehat{G}\;|\; A_{\chi}\neq 0\}$. Since, for any
$A_\chi\neq 0$ and $A_{\chi'}\neq 0$, $A_\chi A_{\chi'}\neq 0$,
$K$ is a finite subsemigroup of $\widehat{G}$, whence $K$ is a
subgroup of $\widehat{G}$. Since $G$ acts faithfully on $A$,
$L:=\{g\in G\;|\; \chi(g)=1, \; \forall \chi\in K\}$ is trivial.
If $K\neq \widehat{G}$, then $L\neq \{1\}$, yielding a contradiction.
\end{proof}

The next result is fundamental to our analysis. We have chosen to
give a direct proof, but see Remark (b) below for an alternative
approach.

\begin{proposition}
Suppose \eqref{yyE2.2.1} and \eqref{yyE2.2.2}.
\begin{enumerate}
\item
$\Delta (A^l_{\Pi,\chi}) \subseteq A^l_{\Pi,\chi} \otimes A$ for
all $\chi\in \widehat{G^l_{\Pi}}$.
\item
$A=\oplus_{\chi\in \widehat{G^l_{\Pi}}}A^l_{\Pi,\chi}$ is
strongly $\widehat{G^l_{\Pi}}$-graded.
\end{enumerate}
\end{proposition}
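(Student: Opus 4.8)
The plan is to read (a) off the intertwining relations of Proposition~\ref{yysec2.1}, and to prove the strong grading in (b) by producing, for each character $\psi$, an explicit element of $A^l_{\Pi,\psi}A^l_{\Pi,-\psi}$ equal to $1$. The delicate point throughout is that the antipode interchanges the left and right gradings, so the argument must shuttle between them.

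First, for (a): by Proposition~\ref{yysec2.1}(c) every $g=\Xi^l_\pi\in G^l_\Pi$ satisfies $(g\otimes \mathrm{Id}_A)\Delta=\Delta g$, so for $a\in A^l_{\Pi,\chi}$ one gets $(g\otimes \mathrm{Id}_A)\Delta(a)=\Delta(g(a))=\chi(g)\Delta(a)$ for all $g\in G^l_\Pi$. Since $|\langle\Pi\rangle|$ is coprime to $\ch k$ and $k$ is algebraically closed, the first-factor action $g\mapsto g\otimes \mathrm{Id}_A$ is semisimple with simultaneous eigenspace decomposition $A\otimes A=\bigoplus_\chi A^l_{\Pi,\chi}\otimes A$; as $\Delta(a)$ is a simultaneous $\chi$-eigenvector it lands in $A^l_{\Pi,\chi}\otimes A$, which is (a). The same computation applied to the second relation of Proposition~\ref{yysec2.1}(c) gives the right-handed analogue $\Delta(A^r_{\Pi,\psi})\subseteq A\otimes A^r_{\Pi,\psi}$, which I will need below.

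For (b) I would use the standard criterion that the $\widehat{G^l_\Pi}$-graded algebra $A$ is strongly graded as soon as $1\in A^l_{\Pi,\chi}A^l_{\Pi,-\chi}$ for every $\chi$ (this already forces $A^l_{\Pi,\chi}A^l_{\Pi,-\chi}=A^l_{\Pi,0}$, whence $A^l_{\Pi,\chi}A^l_{\Pi,\chi'}=A^l_{\Pi,\chi+\chi'}$). Fix $\psi$ and take any $b\in A^r_{\Pi,\psi}$. By the right-handed analogue of (a) I may write $\Delta(b)=\sum b_1\otimes b_2$ with $b_2\in A^r_{\Pi,\psi}$, and then Proposition~\ref{yysec2.1}(e) gives $S(b_2)\in A^l_{\Pi,-\psi}$. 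Feeding this into the antipode identity $\sum b_1 S(b_2)=\epsilon(b)\,1$ and expanding each $b_1=\sum_\eta b_1[\eta]$ into its left-homogeneous components $b_1[\eta]\in A^l_{\Pi,\eta}$, the summand $b_1[\eta]S(b_2)$ lies in $A^l_{\Pi,\eta-\psi}$. Comparing left-degree-$0$ parts of $\sum b_1 S(b_2)=\epsilon(b)1$ isolates the $\eta=\psi$ contribution and yields $\epsilon(b)\,1\in A^l_{\Pi,\psi}A^l_{\Pi,-\psi}$.

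The hard part will be the remaining step: exhibiting, for each $\psi$, some $b\in A^r_{\Pi,\psi}$ with $\epsilon(b)\neq0$, since a homogeneous component chosen carelessly may well lie in $\ker\epsilon$. I would resolve this with the Fourier idempotent $e_\psi:=\tfrac1n\sum_{\pi\in\langle\Pi\rangle}\psi(\pi)^{-1}\,\pi\in A^\circ$, where $n=|\langle\Pi\rangle|$ and $\psi(\pi)$ denotes the common value $\psi(\Xi^l_\pi)=\psi(\Xi^r_\pi)$ under $G^l_\Pi\cong G^r_\Pi\cong\langle\Pi\rangle$. As the distinct characters $\pi$ are linearly independent, $e_\psi\neq0$, so there is $b'\in A$ with $e_\psi(b')\neq0$. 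Setting $b:=\sum b'_1\,e_\psi(b'_2)$, the projection of $b'$ onto $A^r_{\Pi,\psi}$, one checks directly that $b\in A^r_{\Pi,\psi}$ and that $\epsilon(b)=e_\psi(b')\neq0$. Inserting this $b$ into the previous paragraph produces $1\in A^l_{\Pi,\psi}A^l_{\Pi,-\psi}$ for every $\psi$, which finishes (b).
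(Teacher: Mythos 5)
Your proposal is correct, and your part (a) is essentially the paper's argument. For part (b), however, you take a genuinely different route at the crucial point. Both proofs hinge on applying the antipode identity $\sum b_1S(b_2)=\epsilon(b)1$ to a suitable homogeneous element $b$ with $\epsilon(b)\neq 0$ and then extracting the degree-zero component. The paper obtains such a $b$ from a nonzero element $a$ of a given component via $a=\sum \epsilon(a_1)a_2$ together with part (a); since this presupposes that the component is nonzero, the paper at first only gets strong grading over the support $K=\{\chi\;|\;A^l_{\Pi,\chi}\neq 0\}$, and must then invoke Lemma \ref{yysec2.2} (a faithful semisimple action of a finite abelian group) to force $K=\widehat{G^l_{\Pi}}$. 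You instead manufacture, for \emph{every} character $\psi$, an element $b=(\mathrm{Id}\otimes e_\psi)\Delta(b')\in A^r_{\Pi,\psi}$ with $\epsilon(b)=e_\psi(b')\neq 0$, using the Fourier idempotent $e_\psi$ and Dedekind's linear independence of the distinct characters $\pi\in\langle\Pi\rangle$; this produces the nonvanishing of every component and the unit $1\in A^l_{\Pi,\psi}A^l_{\Pi,-\psi}$ in one stroke, so Lemma \ref{yysec2.2} is never needed. (Both mechanisms ultimately rest on the same fact, the injectivity of $\pi\mapsto \Xi^r_{\pi}$, i.e.\ the distinctness of the characters.) Two small remarks: the fact you need is really the mirror image of Proposition \ref{yysec2.1}(e), namely $\Xi^l_{\pi}S=S(\Xi^r_{\pi})^{-1}$, which is not literally the stated relation $\Xi^r_{\pi}S=S(\Xi^l_{\pi})^{-1}$ but follows by the identical computation; and your shuttle through the right grading is avoidable — projecting on the left, $b=(e_\psi\otimes \mathrm{Id})\Delta(b')\in A^l_{\Pi,\psi}$ still satisfies $\epsilon(b)=e_\psi(b')\neq 0$, after which the paper's own one-line ending $\epsilon(b)1=\sum b_1S(b_2)\in A^l_{\Pi,\psi}A$ finishes the proof without ever tracking the degree of $S(b_2)$.
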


\begin{proof} (a) The assertion is equivalent to the statement
that $(\Xi^l_{\pi}\otimes Id_A)\Delta (a) =\chi(\Xi^l_{\pi})
\Delta (a)$ for every $a\in A^l_{\chi}$, which follows from
Proposition \ref{yysec2.1}(c).

(b) For this, it suffices to show that
$A^l_{\Pi,\chi} A^l_{\Pi,-\chi}=A^l_{\Pi,0}$ for all
$\chi\in \widehat{G^l_{\Pi}}$. Since $A$ is
$\widehat{G^l_{\Pi}}$-graded, we only need to show that
$A^l_{\Pi,\chi} A$ contains $1$ for all $\chi\in
\widehat{G^l_{\Pi}}$.

Let $\chi\in \widehat{G^l_{\Pi}}$ and suppose there is
$0 \neq a\in A^l_{\Pi,\chi}$. By (a), $\Delta(a)=\sum a_1\otimes a_2$
with $a_1\in A^l_{\Pi,\chi}$. Since $a=\sum \epsilon(a_1)a_2$,
there is a $b\in A^l_{\Pi,\chi}$ such that $\epsilon(b)\neq 0,$
and after adjusting $a$ we may assume that $\epsilon(b)=1$. Then
$\Delta(b)=\sum b_1\otimes b_2$ with $b_1\in A^l_{\Pi,\chi}$.
Therefore $1=\epsilon(b)=\sum b_1S(b_2)\in A^l_{\Pi,\chi}A$.

In the above paragraph, we have shown that $A$ is strongly $K$-graded
where $K=\{\chi\in \widehat{G^l_{\Pi}}\;|\; A_{\Pi,\chi}\neq 0\}$.
As a consequence, if $A_{\Pi,\chi}\neq 0$ and $A_{\Pi,\chi'}\neq 0$,
then $A_{\Pi,\chi}A_{\Pi,\chi'}\neq 0$. By Lemma \ref{yysec2.1}(a),
$G^l_{\Pi}$ acts faithfully on $A$ and by \eqref{yyE2.2.2},
$G^l_{\Pi}$ acts semisimply on $A$. Lemma \ref{yysec2.2} above says
that $K=\widehat{G^l_{\Pi}}$.
\end{proof}

\begin{remarks} (a) Of course the right-hand version of the proposition also
holds.

(b) In fact the proposition can be deduced from standard results on
Hopf-Galois extensions, as follows. Suppose that $\pi : H
\rightarrow \overline{H}$ is an epimorphism of Hopf algebras, with
$\overline{H}$ finite dimensional. So $H$ is a right
$\overline{H}-$comodule algebra via the map $\phi := (1 \otimes \pi)
\circ \Delta : H \rightarrow H \otimes \overline{H}.$ Define $B :=
H^{co \overline{H}} = \{b \in H : \phi (b) = b \otimes 1 \}.$ Then
the inclusion of algebras $B \subseteq H$ is $\overline{H}-$Galois
by \cite[Example 3.6]{Sc} and \cite[Theorem 8.3.1]{Mo}. We apply
this with $A = H$ and $\overline{H} = (kG^l_{\Pi})^*$ Then
$\overline{H}$ is the group algebra $k \widehat{G^l_{\Pi}}$ by
(\ref{yyE2.2.1}) and (\ref{yyE2.2.2}), and so $A$ is strongly
$\widehat{G^l_{\Pi}}-$graded by \cite[Theorem 8.1.7]{Mo}.
\end{remarks}

\subsection{Classical components}
\label{yysec2.3}
At this point we need to recall and refine some definitions
introduced in \cite[\S 7]{LWZ}; at the same time we take the
opportunity to simplify our notation.

\begin{definition}
Let $A$ be an AS-Gorenstein Hopf algebra with bijective antipode,
and let $\pi$ be the algebra homomorphism from $A$ onto
$A/\mathrm{r}.\ann(\int^l_A),$ where $\int^l_A$ denotes the left
integral of Definition \ref{yysec1.3}.
\begin{enumerate}
\item
$A^l_{\pi}$, $A^r_{\pi}$ and $A_{\pi}$ are respectively called
the {\it left classical component}, the {\it right classical
component}, and the {\it classical component} of $A$. These
subalgebras of $A$ will respectively be denoted by $A^l_0$,
$A^r_0$ and $A_0.$
\item
The \emph{integral order} $io(A)$ of $A$ is the order of $\pi$
in the group $G(A^\circ)$.
\item
The \emph{integral annihilator} of $A$ is the maximal Hopf ideal
(see Remark (c) below) contained in $\mathrm{r.ann}(\int^l_A)$. It
is denoted by $J_{iq}$.
\item
The Hopf algebra $A/J_{iq}$ is called the \emph{integral quotient
of} $A$, denoted by $A_{iq}.$
\end{enumerate}
\end{definition}

\begin{remarksA} (a) In \cite[\S
7]{LWZ}, $A^r_{0}$ is called the classical component of $A$. But, as
we'll see in Examples \ref{yysec3.2} and \ref{yysec3.4}, $A^r_{0}$ is
not always a Hopf subalgebra of $A$. We'll provide evidence below in
Theorems \ref{yysec4.1} and \ref{yysec5.2} that $A_{0}$ may always
be a normal Hopf subalgebra of $A$ when $A$ is prime regular affine
of GK-dimension one, so the present definition seems more
appropriate.

(b) By Lemma \ref{yysec2.1}(a), $io(A) = |G^l_{\pi}|=|G^r_{\pi}|$.

(c) It is easy to see that there is a unique largest Hopf ideal
contained in any (two-sided) ideal of a Hopf algebra, so $J_{iq}$ is
well-defined. Since the antipode $S$ is bijective, $S(\int^l_A) =
\int^r_A$, and so the definitions of the integral annihilator (and
thus the integral quotient) are right-left symmetric: $J_{iq}$ is
the biggest Hopf ideal in $\mathrm{l.ann}(\int^r_A)$. In a similar
fashion one can check that the various classical components can all
equally well be defined using the left annihilator of the right
integral.

(d) It will be convenient later (see Hypotheses (\ref{yysec2.5})) to
extend the use of the notation $A^l_0, A^r_0, A_0$ in Definition
\ref{yysec2.3}(a) to cases where the algebra homomorphism $\pi$ is
not necessarily given by the left integral.
\end{remarksA}

Let us now review some basic facts about how the above concepts
behave in the case of a Hopf algebra which is a finite module over
its center. Recall that, if $Z$ is an Ore domain, then the
\emph{rank} of a $Z$-module $M$ is defined to be the
$Q(Z)-$dimension of $Q(Z)\otimes_Z M$, where $Q(Z)$ is the quotient
division ring of $Z$; the rank of a right $Z$-module is defined
analogously. Let $R$ be any algebra. The PI-degree of $R$ is defined
to be
$$\PIdeg(R)=\min\{n\;|\; R\hookrightarrow M_n(C)
\textrm{ for some commutative ring } C\}$$
(see \cite[Ch. 13]{MR}). If $R$ is a prime PI ring with center $Z$,
then the PI-degree of $R$ equals the square root of the rank of $R$
over $Z$.

Part (b) below was previously known only under the extra hypothesis
that $A$ is regular \cite[Lemma 5.3(g)]{LWZ}.

\begin{theorem} Let $A$ be an affine Hopf $k$-algebra which is a
finite module over its center.
\begin{enumerate}
\item
$A$ is noetherian, AS-Gorenstein and has a bijective antipode.
\item
The integral order $io(A):= n$ of $A$ is finite.
\item If $A$ is prime and $Z(A)$ is integrally closed, then $io(A)
\leq \PIdeg(A).$ In particular this holds if $A$ is regular
and prime.
\item
The integral quotient $A_{iq}$ is isomorphic as a Hopf algebra
to $(kC_n)^{\circ}$, the Hopf dual of the group algebra of the
cyclic group of order $n$.
\item
$J_{iq} = \cap_{i=0}^{n-1} \mathrm{ker}((\pi)^i: A \rightarrow k)$.
\item Suppose that $A$ is regular of global dimension $d$, and that
$n$ is a unit in $k.$
Then $A_0^l$ and $A_0^r$ are both regular of global dimension $d.$
\end{enumerate}
\end{theorem}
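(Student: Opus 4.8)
The plan is to run everything through the strongly graded structure supplied by Proposition \ref{yysec2.2}. Write $G=\widehat{G^l_{\pi}}$, a finite abelian group of order $n$, and set $R=A^l_0=A^l_{\pi,0}$; by Proposition \ref{yysec2.2}(b) the decomposition $A=\oplus_{\chi\in G}A^l_{\pi,\chi}$ is a strongly $G$-graded algebra with identity component $R$. Since the right-hand analogue of Proposition \ref{yysec2.2} holds (as noted in the Remarks following it), the case of $A^r_0$ is identical, so I will only treat $R=A^l_0$. The claim ``regular of global dimension $d$'' amounts to $\gldim R=d$. I would first record that $R$ is affine noetherian: the elements of $G^l_{\pi}$ are algebra automorphisms of $A$, hence preserve the affine centre $Z=Z(A)$, so $Z^{G^l_{\pi}}$ is affine by Noether's theorem and $Z$ is finite over it; as $A$ is finite over $Z$ it is finite over $Z^{G^l_{\pi}}$, and $R=A^{G^l_{\pi}}$ is a submodule of this noetherian module, so $R$ is affine noetherian. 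It then remains to prove $\gldim R=d$ via two inequalities.

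For $\gldim R\le d$ I would use only that $A$ is strongly graded (no invertibility needed). Strong gradedness makes $A$ finitely generated projective as a left and as a right $R$-module, so the restriction functor from left $A$-modules to left $R$-modules is exact and preserves projectivity, whence $\prdim_R(\mathrm{Res}\,M)\le \prdim_A M$ for every $A$-module $M$. Moreover, for any $R$-module $N$ the graded decomposition $A\otimes_R N=\oplus_{\chi}(A^l_{\pi,\chi}\otimes_R N)$ exhibits its degree-$0$ part $A^l_{\pi,0}\otimes_R N\cong N$ as an $R$-module direct summand of $\mathrm{Res}(A\otimes_R N)$. Chaining these, $\prdim_R N\le \prdim_R\mathrm{Res}(A\otimes_R N)\le \prdim_A(A\otimes_R N)\le \gldim A=d$, so $\gldim R\le d$.

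For the reverse inequality $\gldim A\le \gldim R$ I would invoke the hypothesis that $n$ is a unit in $k$, which makes the extension $R\subseteq A$ separable. Indeed, strong gradedness gives for each $\chi$ elements $x^{\chi}_i\in A^l_{\pi,\chi}$ and $y^{\chi}_i\in A^l_{\pi,-\chi}$ with $\sum_i x^{\chi}_i y^{\chi}_i=1$, and $e:=\tfrac1n\sum_{\chi}\sum_i x^{\chi}_i\otimes_R y^{\chi}_i\in A\otimes_R A$ is a separability idempotent: the multiplication map sends $e$ to $\tfrac1n\sum_{\chi}1=1$ (using $n^{-1}\in k$), and the bimodule identity $ae=ea$ follows by a reindexing over the grading group. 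Separability means the multiplication $A\otimes_R\mathrm{Res}(M)\to M$ splits as $A$-modules, so every $A$-module $M$ is an $A$-module direct summand of $A\otimes_R\mathrm{Res}(M)$. Since $A$ is projective as a right $R$-module, the functor $A\otimes_R-$ is exact and sends projective $R$-modules to projective $A$-modules, giving $\prdim_A(A\otimes_R\mathrm{Res}\,M)\le \prdim_R\mathrm{Res}\,M$. Hence $\prdim_A M\le \prdim_R\mathrm{Res}\,M\le \gldim R$, so $\gldim A\le \gldim R$. Combined with the previous paragraph this yields $\gldim R=d=\gldim A$.

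I expect the one genuinely delicate point to be the verification that $e$ is a separability idempotent, namely the identity $ae=ea$; this is exactly where invertibility of $n$ is indispensable, whereas the rest is the formal induction/restriction dictionary for strongly graded rings. A cleaner, citation-based alternative would be to quote directly the theorem that a ring strongly graded by a finite group whose order is invertible in the identity component has the same global dimension as that component (N\u{a}st\u{a}sescu--Van Oystaeyen), thereby bypassing the explicit idempotent entirely; I would likely present the self-contained argument above and remark on this alternative.
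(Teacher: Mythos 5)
Your proposal addresses only part (f) of this six-part theorem; parts (a)--(e) are never touched, and they do not follow from the graded machinery you set up. Part (a) needs the Artin--Tate lemma, the Gorenstein theorem of \cite{WZ1}, and Skryabin's bijectivity of the antipode \cite{Sk}; parts (d) and (e) are quoted from \cite{LWZ}. The serious omission is part (b), finiteness of $io(A)$, which the paper singles out as the new content (previously known only for regular $A$): its proof is localization-theoretic rather than graded --- by Muller's theorem, $\ker\epsilon$ and $\mathrm{r.ann}(\int^l_A)$ lie in a common clique of maximal ideals; the auto-equivalence $\int^l_A\otimes -$ of the category of right $A$-modules then places every tensor power $(\int^l_A)^{\otimes m}$ in the clique of $k$; and finiteness of that clique (which holds because $A$ is a finite $Z(A)$-module) forces $(\int^l_A)^{\otimes(r-s)}\cong k$ for some $r>s$. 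Part (c) further requires Braun's theorem \cite{Bra}, bounding the clique size by the PI-degree when the centre is integrally closed. Note also that your proof of (f) silently presupposes (b): Proposition \ref{yysec2.2} can only be invoked once one knows that $\langle\pi\rangle$ is finite, i.e.\ that $io(A)<\infty$.

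That said, your treatment of part (f) itself is correct, and its second half takes a genuinely different route from the paper's. Both proofs obtain $\gldim A^l_0\le d$ by the same restriction/induction argument from strong grading. For the reverse inequality the paper goes through dimension theory: $\GKdim A^l_0=\GKdim A=d$ by the Cohen--Macaulay property \cite{BG1}, and Krull dimension (which equals GK-dimension and is a lower bound for global dimension for noetherian rings finite over their centres \cite{GS}) finishes the job. You instead prove the stronger equality $\gldim A=\gldim A^l_0$ by a Maschke-type separability argument, which is exactly where $n^{-1}\in k$ enters. The point you flag as delicate does go through: for each $\chi$ the multiplication map $A^l_{\pi,\chi}\otimes_{A^l_0}A^l_{\pi,-\chi}\to A^l_0$ is an isomorphism, so the element $e_\chi$ mapping to $1$ is canonical, and comparing images under the injective multiplication map $A^l_{\pi,\chi'+\chi}\otimes_{A^l_0}A^l_{\pi,-\chi}\to A^l_{\pi,\chi'}$ gives $ae_\chi=e_{\chi'+\chi}a$ for $a$ homogeneous of degree $\chi'$; summing over $\chi$ and dividing by $n$ yields $ae=ea$. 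Your route is purely homological and needs no noetherian or PI dimension theory (in particular your opening paragraph establishing that $A^l_0$ is affine noetherian is not actually used by your argument), at the price of the separability verification; the paper's route gets the lower bound for free from machinery it has already assembled elsewhere.
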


\begin{proof} (a) The Artin-Tate Lemma \cite[13.9.10]{MR} shows
that $A$ is noetherian. That $A$ is AS-Gorenstein is a special
case of the main result of \cite{WZ1}. Bijectivity of the antipode
is proved in \cite{Sk}.

(b) We'll prove that $io(A)$ is bounded above by the number of
maximal ideals of $A$ whose intersection with $Z(A)$ equals $\ker
\epsilon \cap Z(A),$ which of course is finite because $A$ is a
finite $Z(A)-$module.

 By the definition of $\int^l_A$, $\ker
\epsilon \cap Z(A) \subseteq \mathrm{r.ann}(\int^l_A)$. Hence, by
Muller's theorem \cite[Theorem III.9.2]{BG2}, $\ker \epsilon$ and
$\mathrm{r.ann}(\int^l_A)$ belong to the same clique of
$\mathrm{maxspec} A$ (in the sense of \cite[I.16.2]{BG2}, for
example). This means that there is a positive integer $t$ and a set
$k= V_0, \ldots , V_t = \int^l_A$ of simple right $A-$modules such
that, for each $j = 0, \ldots , t-1$, either $\Ext^1_A(V_{j+1},V_j)
\neq 0$ or $\Ext^1_A(V_{j},V_{j+1}) \neq 0$. Now, since $\int^l_A$
is a one-dimensional (right) $A-$module, it has an inverse in
$G(A^{\circ})$, given in fact by $(S(\int^l_A))^{\ast}.$ Hence, the
functor $\int^l_A \otimes -$ is an auto-equivalence of the category
of right $A-$modules. In particular
$\Ext^i_A(M,N)\cong\Ext^i_A(\int^l_A \otimes M, \int^l_A \otimes N)$
for all non-negative integers $i$ and all right $A-$modules $M$ and
$N.$ Applying this to the above chain of $\Ext$-groups, we deduce
that (the annihilators of) $\int^l_A \otimes k \cong \int^l_A$ and
$(\int^l_A)^{\otimes 2}$ are in the same clique. That is,
$(\int^l_A)^{\otimes 2}$ is in the same clique as $k$. Repeating
this argument allows us to deduce that $(\int^l_A)^{\otimes m}$ is
in the clique of $k$ for all non-negative integers $m$. Now the
finiteness of clique($\ker \epsilon $) noted at the start of the
argument forces $(\int^l_A)^{\otimes r}\cong(\int^l_A)^{\otimes s}$
for non-negative integers $r,s$ with $r > s$. Thus
$(\int^l_A)^{\otimes (r-s)}\cong k$ with $r-s \leq
|\mathrm{clique}(\ker \epsilon )|,$ as required.

(c) The extra hypotheses ensure that $A$ equals its trace ring, so
that
$$|\mathrm{clique}(\ker \epsilon )| \leq \PIdeg(A)$$
by \cite[Theorem 8]{Bra}. When $A$ is regular
its center is integrally closed by \cite[Lemma 5.3(b)]{LWZ}.

(d),(e) These are special cases of \cite[Lemma 4.4(b),(c)]{LWZ}.

(f) By the strong grading property, Proposition \ref{yysec2.2}(b),
$A^l_0$ is a direct summand of $A$ as left $A^l_0-$module. So, if
$V$ is any left $A^l_0-$module, then $$\prdim_{A^l_0}(V) \leq
\prdim_{A^l_0}(A \otimes_{A^l_0} V) \leq \prdim_{A}(A
\otimes_{A^l_0} V),$$ where the second inequality holds because $A$
is a projective left $A^l_0 -$module. Therefore $\gldim(A^l_0) \leq
d.$ However, since $A$ is a finitely generated $A^l_0-$module, these
algebras have the same GK-dimension, and this is $d$ thanks to the
Cohen-Macaulay property satisfied by $A$, \cite[Theorem A]{BG1}.
Since Krull dimension equals GK-dimension and is a lower bound for
global dimension for noetherian rings which are finite over their
centers \cite{GS}, the result follows.
\end{proof}

\begin{proposition}
Let $A$ be an AS-Gorenstein Hopf algebra with a bijective antipode.
Assume \eqref{yyE2.2.2} for $\Pi = \{ \pi \}$, where $\pi : A
\rightarrow A/\mathrm{r.ann}(\int^l_A).$ Then
$$J_{iq} = (J_{iq}\cap A^l_0)A =  A(A^l_0 \cap J_{iq})
=(\ker{\epsilon} \cap A^l_0)A.$$
\end{proposition}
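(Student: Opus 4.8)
The plan is to reduce the whole chain of equalities to two facts: first, that the characters $\epsilon$ and $\pi^i$ all agree on the identity component, so that $J_{iq}\cap A^l_0 = \ker\epsilon\cap A^l_0$; and second, that $J_{iq}$ is recovered from this intersection via the strong grading, i.e. $J_{iq} = A(J_{iq}\cap A^l_0) = (J_{iq}\cap A^l_0)A$. Granting these, the displayed equalities follow at once, since $A^l_0\cap J_{iq} = J_{iq}\cap A^l_0 = \ker\epsilon\cap A^l_0$. The two external ingredients are Theorem \ref{yysec2.3}(e), which identifies $J_{iq}$ with $\bigcap_{i=0}^{n-1}\ker(\pi^i)$ where $n = io(A)$ and $\pi^0 = \epsilon$, and Proposition \ref{yysec2.2}(b), which (thanks to \eqref{yyE2.2.2}, ensuring $n$ is a unit in $k$) makes $A = \bigoplus_{\chi\in\widehat{G^l_{\pi}}}A^l_{\pi,\chi}$ strongly graded with identity component $A^l_0$.

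For the first fact one inclusion is immediate: since $\epsilon = \pi^0$ is among the intersected characters, $J_{iq}\subseteq\ker\epsilon$, whence $J_{iq}\cap A^l_0\subseteq\ker\epsilon\cap A^l_0$. For the reverse I would show that on $A^l_0$ every convolution power $\pi^i$ collapses to $\epsilon$. The engine is Proposition \ref{yysec2.1}(c), giving $\Delta(A^l_0)\subseteq A^l_0\otimes A$, together with the observation that $\pi$ and $\epsilon$ agree on $A^l_0$ (apply $\epsilon$ to the identity $\Xi^l_{\pi}(a)=a$ defining $A^l_0$). Writing $\pi^i = \pi * \pi^{i-1}$ and choosing $\Delta(a)=\sum a_1\otimes a_2$ with $a_1\in A^l_0$, one gets $\pi^i(a)=\sum\pi(a_1)\pi^{i-1}(a_2)=\sum\epsilon(a_1)\pi^{i-1}(a_2)=\pi^{i-1}(a)$ for $a\in A^l_0$, so by induction $\pi^i(a)=\epsilon(a)$ for all $i$. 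Hence every $a\in\ker\epsilon\cap A^l_0$ lies in all the $\ker(\pi^i)$, that is, in $J_{iq}$, giving $J_{iq}\cap A^l_0=\ker\epsilon\cap A^l_0$.

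For the second fact the key point, and the step I expect to require the most care, is that $J_{iq}$ is a \emph{graded} ideal for the $\widehat{G^l_{\pi}}$-grading, equivalently that it is stable under the winding automorphism $\Xi^l_{\pi}$ generating $G^l_{\pi}$. This I would deduce from the relation $\pi^j\circ\Xi^l_{\pi} = \pi^{j+1}$ (which is just $(\pi*\pi^j)(a)=\sum\pi(a_1)\pi^j(a_2)$ read through $\Xi^l_{\pi}(a)=\sum\pi(a_1)a_2$): it shows $\Xi^l_{\pi}(\ker\pi^{j+1})=\ker\pi^j$, so the automorphism $\Xi^l_{\pi}$ merely permutes the finite family $\{\ker\pi^j\}_{j\in\mathbb{Z}/n}$ cyclically and therefore fixes their intersection $J_{iq}$. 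Since $G^l_{\pi}$ acts semisimply by \eqref{yyE2.2.2}, a $G^l_{\pi}$-stable subspace of $A$ is the direct sum of its intersections with the homogeneous components, so $J_{iq}$ is graded with degree-zero part $J_{iq}\cap A^l_0$.

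Finally I would invoke the standard fact about strongly graded rings: if $A=\bigoplus_g A_g$ is strongly graded with identity component $A_e$ and $J$ is a graded two-sided ideal, then $J\cap A_g = A_g(J\cap A_e)=(J\cap A_e)A_g$ for every $g$, using $A_gA_{g^{-1}}=A_e$; summing over $g$ yields $J=A(J\cap A_e)=(J\cap A_e)A$. Applying this to $J=J_{iq}$ and $A_e=A^l_0$ gives $J_{iq}=A(J_{iq}\cap A^l_0)=(J_{iq}\cap A^l_0)A$, and substituting $J_{iq}\cap A^l_0=\ker\epsilon\cap A^l_0$ from the first fact closes the chain. The only genuinely delicate point is the gradedness of $J_{iq}$; once that is in hand the strong-grading identity is purely formal, and the rest is the bookkeeping of the first fact.
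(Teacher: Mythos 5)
Your proof is correct, and its first half coincides with the paper's: both establish that $J_{iq}$ is $\widehat{G^l_{\pi}}$-graded (via Theorem \ref{yysec2.3}(e) --- you spell out the cyclic permutation $\Xi^l_{\pi}(\ker\pi^{j+1})=\ker\pi^{j}$ and the semisimplicity of the action, where the paper simply cites the theorem) and then obtain $J_{iq}=(J_{iq}\cap A^l_0)A=A(A^l_0\cap J_{iq})$ from the strong grading of Proposition \ref{yysec2.2}(b). Where you genuinely diverge is the last equality. The paper notes that $J_{iq}\subseteq(\ker\epsilon\cap A^l_0)A$ and then compares $k$-codimensions: both right ideals have codimension $n$, by Proposition \ref{yysec2.2}(b) for one and by Theorem \ref{yysec2.3}(d) (the identification $A_{iq}\cong(kC_n)^{\circ}$) for the other. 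You instead prove the sharper degree-zero statement $J_{iq}\cap A^l_0=\ker\epsilon\cap A^l_0$ directly, by showing that every convolution power $\pi^{i}$ restricts to $\epsilon$ on $A^l_0$ --- using $\Delta(A^l_0)\subseteq A^l_0\otimes A$ from Proposition \ref{yysec2.1}(c) together with the observation that $\pi$ and $\epsilon$ agree on $A^l_0$ --- and then substitute this into the strong-grading identity. Your route is more elementary and self-contained: it never invokes Theorem \ref{yysec2.3}(d) and needs no dimension count, only the description $J_{iq}=\bigcap_{i}\ker(\pi^{i})$, and it isolates the clean intermediate fact that all powers of $\pi$ collapse to the counit on the left classical component. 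The paper's count, by contrast, is shorter given that the structure of the integral quotient is already in hand, and it records the additional information that both ideals have codimension exactly $n$ in $A$.
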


\begin{proof}
By Theorem \ref{yysec2.3}(e), $J_{iq}$ is stable under
the action of $G^l_{\pi}$ on $A$. Thus $J_{iq}$ is
$\widehat{G^l_{\pi}}$-graded. The first two equalities therefore
follow at once from strong grading, Proposition \ref{yysec2.2}(b).
Moreover, since $J_{iq} \subseteq \ker \epsilon$,  $J_{iq} \subseteq
(\ker{\epsilon} \cap A^l_0)A.$ But the factors by both these right
ideals have $k$-dimension $n$, by Proposition \ref{yysec2.2}(b) and
Theorem \ref{yysec2.3}(d), so equality follows.
\end{proof}

\begin{remarksB} (a) The hypothesis
\eqref{yyE2.2.2} that $p\nmid io(A)$ is necessary for the validity
of Proposition \ref{yysec2.2}. For, let $k$ have characteristic $p
> 0$, and let $A$ be the factor of the enveloping algebra of the
$k-$Lie algebra $\mathfrak{g} = \{kx \oplus ky : [y,x] = x \}$ by
the Hopf ideal $\langle y^p - y \rangle.$ 
Continue to write $x$ and $y$ for the images of these elements in
$A$. Then, by \cite[Lemma 6.3(c)]{BZ}, $\Xi_{\pi}(x)=x$ and
$\Xi_{\pi}(y)=y-1,$ so that $G^l_{\pi} = G^r_{\pi} =\langle
\Xi_{\pi} \rangle \cong C_p,$ and $A_0^r = A^l_0 = A_0 = k[x]$. Thus
$A = \bigoplus_{i=0}^{p-1} A_0 y^i,$ but this is not a strong group
grading. In fact $A$ has no non-trivial ${\mathbb Z}_p$-grading that
is a strong grading. Observe also that this example is regular (and
so hereditary): for by \cite{LL} it's enough to check that the
trivial $A-$module has finite projective dimension, and this follows
since it is a direct summand of the semisimple module $A/Ax,$ which
manifestly has projective dimension one. Notice finally that $J_{iq}
= \langle x \rangle,$ so that Proposition \ref{yysec2.3} remains
valid for this example. We do not know if (\ref{yyE2.2.2}) is
actually needed for Proposition \ref{yysec2.3}, or for Theorem
\ref{yysec2.3}(f).

(b) Given the known examples, we ask:

Question: Under the hypotheses of Proposition \ref{yysec2.3}, is the
center $Z(A) $ of $A$ contained in the classical component $A_0$?

We shall show in Lemma
\ref{yysec5.2} that this is the case when $A$ is prime regular of
GK-dimension 1, but it may be that it is true much more generally.

(c) Of course the version of Proposition \ref{yysec2.3} with $A_0^r$
replacing $A^l_0$ is also true.

(d) In fact (d) and (e) of Theorem \ref{yysec2.3} hold, with the
proofs from \cite{LWZ}, in any Hopf algebra with finite integral
order.

(e) We do not know if all the parts of Theorem \ref{yysec2.3} remain
valid if the hypotheses are weakened to include all affine
noetherian prime Hopf $k$-algebras which satisfy a polynomial
identity. We note here that the question is not vacuous, since not
all such algebras are finite modules over their centers. Thus the
algebra $\bar{U}$ constructed in \cite{GL} as the bosonisation of
the enveloping algebra of a Lie superalgebra is such an algebra, of
GK-dimension 2, which is \emph{not} a finite module over its center
(as can be most quickly confirmed by checking that the clique of the
augmentation ideal is infinite).
\end{remarksB}

\subsection{The integral minor}
\label{yysec2.4}
In addition to the integral order, our classification of Hopf
algebras of GK-dimension one will involve a second invariant,
which - crudely speaking - is a measure of the extent of
left-right asymmetry between the right and left hit actions
of $\pi$, the group-like element arising from the left
integral. More precisely, it measures the difference between
$G_{\pi}$ and $G^l_{\pi} \times G^r_{\pi}.$  The definition is
as follows.

\begin{definition}
Let $A$ be a Hopf algebra and keep the notation and hypotheses
of Definition \ref{yysec2.3}. Recall from \ref{yysec2.1} that
$G_{\pi}$ is an abelian subgroup of
$\mathrm{Aut}_{k-\mathrm{alg}}(A)$ generated by its subgroups
$G^l_{\pi}$ and $G^r_{\pi}.$ Suppose that the integral order
$io(A)$ is finite; or, equivalently, that $G_{\pi}$ is a finite
group. The {\it integral minor} of $A$ is
$$im(A):=|G^l_{\pi}/(G^l_{\pi}\cap G^r_{\pi})|,$$
the intersection being taken as subgroups of $G_{\pi}.$
\end{definition}

In particular, $im(A)=1$ if and only if $G^l_{\pi}=G^r_{\pi},$
and this holds in particular when $A$ is
cocommutative.

\subsection{Classical components in GK-dimension one}
\label{yysec2.5}
In preparation for the work on Hopf algebras of  GK-dimension
one which will occupy the rest of the paper, we state here as
our starting point what can immediately be written down about
such an algebra using the results in \cite{LWZ} and the concepts
introduced in this section.

\begin{theorem}
Let $A$ be a semiprime affine Hopf $k-$algebra of GK-dimension
one, with $\pi$ the homomorphism from $A$ onto
$A/\mathrm{r.ann}(\int^l_A)$.
\begin{enumerate}
\item
The integral order $io(A):=n$ of $A$ is finite.
\item
If \eqref{yyE2.2.2} holds for $\Pi=\{\pi\}$, then $A$ is
strongly graded in two ways by the cyclic group of order $n$,
namely
$$ A=\oplus_{\chi\in \widehat{G^l_{\pi}}}A^l_{\pi,\chi} \textit{ and }
A=\oplus_{\chi\in \widehat{G^r_{\pi}}}A^r_{\pi,\chi}.$$
\item
The injective dimension of $A$ is one.

\noindent Suppose in addition for (d), (e) and (f) that $A$ is
regular; for (e) and (f), assume also that it is prime, and
for (f) assume also \eqref{yyE2.2.2}.
\item
$A$ is hereditary.
\item
$\PIdeg(A) = n.$
\item
$A^l_0,$ $A^r_0$ and $A_0$ are affine commutative Dedekind domains,
with $A_0^l \cong A_0^r.$
\end{enumerate}
\end{theorem}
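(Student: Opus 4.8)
The plan is to assemble the six assertions from the general machinery of \S\ref{yysec2}, the starting observation being that by the theorem of Small, Stafford and Warfield \cite{SSW} a semiprime affine $k$-algebra of GK-dimension one is a finite module over its affine centre; hence Theorem \ref{yysec2.3} applies to $A$ throughout. Part (a) is then immediate from Theorem \ref{yysec2.3}(b). For (b), the finiteness of $io(A)=n$ makes $\langle\pi\rangle\cong C_n$ a finite (cyclic, hence abelian) subgroup of $G(A^\circ)$, so \eqref{yyE2.2.1} holds; under the extra hypothesis \eqref{yyE2.2.2}, Proposition \ref{yysec2.2}(b) and its right-hand analogue show $A$ is strongly $\widehat{G^l_\pi}$- and $\widehat{G^r_\pi}$-graded, and Lemma \ref{yysec2.1}(a),(c) identify each grading group with $C_n$. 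For (c) I would invoke that $A$ is AS-Gorenstein (Theorem \ref{yysec2.3}(a), via \cite{WZ1}) and Cohen--Macaulay \cite{WZ1,BG1}; since $A$ is module-finite over its centre, Krull, GK and injective dimensions coincide, forcing $\injdim A=\GKdim A=1$. Part (d) is then formal: when $A$ is regular its finite global dimension equals its injective dimension (Definition \ref{yysec1.2}(a)), which is $1$ by (c), so $A$ is hereditary.

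The substance lies in (f), from which (e) will follow. Assume now $A$ prime regular and \eqref{yyE2.2.2}. Theorem \ref{yysec2.3}(f) gives that $A_0^l$ and $A_0^r$ are regular of global dimension $1$; being fixed rings of the order-$n$ groups $G^l_\pi,G^r_\pi$ (with $n$ a unit in $k$) acting on the affine noetherian algebra $A$, they are themselves affine of GK-dimension one. The crucial point is that these classical components are \emph{commutative}: I would extract this from the structural analysis of classical components in \cite[\S 7]{LWZ} together with regularity, noting that it genuinely requires the Hopf/coalgebra input, since a general prime hereditary affine PI algebra of GK-dimension one (such as $A$ itself) need not be commutative. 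Granting commutativity, each of $A_0^l,A_0^r$ is a commutative affine hereditary noetherian ring, hence a finite product of Dedekind domains, and the right/left coideal structure of Proposition \ref{yysec2.1}(c) (compatibility of $\Delta$ with $\epsilon$) pins it down to a single Dedekind domain. The antipode then supplies the comparison: by Proposition \ref{yysec2.1}(e), $S(A_0^l)\subseteq A_0^r$ and $S(A_0^r)\subseteq A_0^l$, and since $S$ is bijective it restricts to an anti-isomorphism $A_0^l\to A_0^r$, which is an isomorphism as both rings are commutative, giving $A_0^l\cong A_0^r$. Finally $A_0=A_0^l\cap A_0^r=(A_0^l)^{G^r_\pi}$ (using that $G^r_\pi$ preserves $A_0^l$, by Proposition \ref{yysec2.1}(d)) is the fixed ring of a finite group on a Dedekind domain, hence again an affine Dedekind domain.

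For (e), Theorem \ref{yysec2.3}(c) already gives $n=io(A)\le\PIdeg(A)$, so only $\PIdeg(A)\le n$ remains. Here I would exploit the rank-$n$ strong grading from (b): with $A_0^l$ a commutative domain and each graded component an invertible $A_0^l$-module, $A$ is a finite $A_0^l$-module of rank $n$, and inverting the nonzero elements of $A_0^l$ realises $Q(A_0^l)$ as a commutative subfield of the central simple quotient ring $Q(A)$ over which $Q(A)$ has dimension $n$; the maximal-subfield inequality then forces $\PIdeg(A)\le n$, whence equality. This equality $io(A)=\PIdeg(A)$ for prime regular $A$ of GK-dimension one is precisely \eqref{ole}, and is available from \cite{LWZ}.

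The decisive and least formal step is the commutativity of the classical components underlying both (e) and (f); I expect this to be the main obstacle. Everything else is bookkeeping with the already-established strong grading of (b), the antipode and coideal identities of Proposition \ref{yysec2.1}, and the homological input of Theorem \ref{yysec2.3}.
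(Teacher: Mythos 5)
Your overall skeleton matches the paper's proof: parts (a)--(d) are obtained exactly as there (\cite{SSW} plus \cite{WZ1} to get noetherian, AS-Gorenstein, Cohen--Macaulay; then Theorem \ref{yysec2.3}(b), Proposition \ref{yysec2.2}, and injective dimension one giving hereditary when regular), and for the hard content of (e) and (f) both you and the paper lean on \cite[\S 7]{LWZ}. The genuine gap is in your reconstruction of (f). You extract only \emph{commutativity} of $A^l_0$ from \cite{LWZ} and then claim that it is ``a commutative affine hereditary noetherian ring, hence a finite product of Dedekind domains, and the right/left coideal structure of Proposition \ref{yysec2.1}(c) pins it down to a single Dedekind domain.'' That last step is unjustified, and the mechanism you invoke cannot work: a coideal subalgebra of a prime affine Hopf algebra of GK-dimension one can perfectly well contain nontrivial idempotents --- $k\langle g\rangle\cong kC_n$ inside the Taft algebra $H(n,t,\xi)$ is a commutative, affine, semisimple (hence hereditary) coideal, indeed Hopf, subalgebra which is a product of $n$ fields. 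Nor do primeness of $A$ and the strong grading help abstractly: the identity component of a strong $\mathbb{Z}_2$-grading of the prime ring $M_2(k)$ is the diagonal subalgebra $k\times k$. So the domain property of $A^l_0$ requires the same nontrivial Hopf-theoretic input as commutativity. The paper handles this by citing \cite[Theorem 7.1]{LWZ} for the full statement that $A^l_0$ is an affine commutative \emph{domain}; your argument is repaired simply by quoting that result in full, after which your remaining steps (Dedekind from Theorem \ref{yysec2.3}(f), the antipode giving $A^l_0\cong A^r_0$ via Proposition \ref{yysec2.1}(e), and $A_0=(A^l_0)^{G^r_{\pi}}$ Dedekind as the fixed ring of a finite group) agree with the paper's.

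A secondary defect concerns (e). The theorem asserts $\PIdeg(A)=n$ assuming only that $A$ is prime and regular, \emph{without} \eqref{yyE2.2.2}; your argument routes through the strong grading of part (b), which is available only under \eqref{yyE2.2.2}, so as written you prove (e) only under that extra hypothesis (the paper again just quotes \cite[Theorem 7.1]{LWZ}). Moreover your rank/maximal-subfield bound $\PIdeg(A)\le n$ is essentially Proposition \ref{yysec5.1}(a) of \S \ref{yysec5}, and it too presupposes that $A^l_0$ is a domain (so that $Q(A^l_0)$ is a field and $A^l_0\setminus\{0\}$ consists of regular elements); that is, it rests on exactly the input that is missing above. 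Granted \eqref{yyE2.2.2} and the LWZ citation taken in full, your derivation of (e) from Theorem \ref{yysec2.3}(c) together with the embedding $A\hookrightarrow M_n(Q(A^l_0))$ is correct, and is a somewhat more self-contained route than the paper's bare citation.
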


\begin{proof} By \cite{SSW}, $A$ is noetherian and is a finite
module over its center. Thus $A$ is AS-Gorenstein and Cohen-Macaulay,
by \cite[Theorems 01. and 0.2(1)]{WZ1}, so $A$ has a left integral
and $\pi$ is defined for $A$.

(a) follows from Theorem \ref{yysec2.3}(b). (b) is immediate from
Proposition \ref{yysec2.2}. (c) follows from \cite[Theorem
0.2(3)]{WZ1}.

Suppose henceforth that $A$ is regular. Then (c) immediately gives
(d).

(e) That the PI-degree of $A$ equals its integral order when $A$ is
prime and regular is part of \cite[Theorem 7.1]{LWZ}.

(f) That $A^l_0$ is an affine commutative domain is proved in
\cite[Theorem 7.1]{LWZ}; that it is Dedekind now follows from
Theorem \ref{yysec2.3}(f). An isomorphism of the right and left
classical components is provided by the antipode, thanks to
Proposition \ref{yysec2.1}(e). Finally, $A_0 = (A^l_0)^{G^r_{\pi}} =
(A^r_0)^{G^l_{\pi}}$ by Proposition \ref{yysec2.1}(d). Thus $A_0$ is
an affine commutative domain, and since the fixed ring of an
integrally closed commutative domain under the action of a finite
group is again integrally closed, it follows that $A_0$ is Dedekind.
\end{proof}

\begin{remark} The equality in (e) fails when any one of the hypotheses
that $A$ is prime, regular, or has GK-dimension one, is dropped. To
see the need for the prime hypothesis, one can take $A = \C G$ where
$G$ is any direct product of a finite group with an infinite cyclic
group. In this case $\int^l_A \cong k$ and so $io(A) = 1$. The
regular hypothesis fails in Case 2 of Example \ref{yysec3.2} in the
next section; here, the integral order is 1 but the PI-degree is 2.
Finally, (e) fails for prime regular affine PI Hopf algebras of
GK-dimension $\geq 2$, see \cite[Example 8.5]{LWZ}. Another example
is the enveloping algebra of $\mathfrak{sl}(2,k)$ over a field $k$
of positive characteristic $p$; in this case $H:=
U(\mathfrak{sl}(2,k))$ is unimodular, that is $\int^l_H \cong k$,
\cite[Proposition 6.3(e)]{BZ}. However it is possible that the
equality (e) holds for all regular affine PI Hopf domains of
GK-dimension 2.
\end{remark}

At this point we can list all the hypotheses that we will use for
our classification. By the above theorem, given (a), (d) and (e)
in the list below, the map $\pi: H\to H/\mathrm{r.ann}(\int^l_H)$
satisfies (b) and (c).

\begin{hypotheses}
\begin{enumerate}
\item $H$ is an affine, noetherian, prime, PI Hopf algebra; \item
there is an algebra homomorphism $\pi: H\to k$ such that the order
$n$ of $\pi$ in $G(H^{\circ})$ is equal to the PI-degree of $H$;
\item the
invariant subring $H^l_{0}$ under the action of the left winding
automorphism $\Xi^l_{\pi}$ is a Dedekind domain; \item when
$p=\text{char}\; k>0$, then $n$ is coprime to $p$; \item $\GKdim
H=\gldim H=1$.
\end{enumerate}
\end{hypotheses}

We remark for future use that the analogues of Theorems
\ref{yysec2.3} and
\ref{yysec2.5} remain valid (with the same proofs) for any map $\pi$
satisfying the above hypotheses.

\section{Examples}
\label{yysec3} In this section we assemble some examples of prime
affine Hopf $k$-algebras of GK-dimension one. Note that we've
already listed one such example in Remarks \ref{yysec2.3}B(a).

\subsection{Commutative examples}
\label{yysec3.1} Recall \cite[Theorem 20.5]{Hu} that, over an
algebraically closed field $k$,
there are precisely two connected algebraic groups of dimension one.
Therefore there are precisely two commutative $k$-affine domains of
GK-dimension one which admit a structure of Hopf algebra, namely
$A_1 = k[X]$ and $A_2 = k[X^{\pm 1}]$. For $A_1,$ $\epsilon (X) =
0,$ $S(X) = -X,$ and
$$\Delta (X) = X \otimes 1 + 1 \otimes X. $$
For $A_2,$ $\epsilon (X) = 1,$ $S(X) = X^{-1},$ and
$$ \Delta (X) = X \otimes X. $$
Commutativity implies that $ io(A_j) = im(A_j) = 1$ for $j = 1,2.$
Of course $A_1$ and $A_2$ are regular, in common with \emph{all}
commutative affine Hopf algebras which are domains, \cite[Theorem
11.6]{Wa}. The following result is \cite[Theorem 0.2(c)]{LWZ}, which
takes care of the case when $io(H)=1$.

\begin{proposition}
Suppose $H$ satisfies Hypotheses \ref{yysec2.5}. If $\pi: H\to
H/\mathrm{r.ann}(\int^l_H)$ has order 1 (or equivalently,
$\int^l_H\cong k$ as $H$-bimodule), then $H$ is isomorphic as a Hopf
algebra to $A_1$ or $A_2$.
\end{proposition}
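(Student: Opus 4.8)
The plan is to reduce at once to the commutative case and then appeal to the classification of one-dimensional algebraic groups. First I would observe that the hypothesis ``$\pi$ has order $1$'' says exactly that $io(H) = 1$: by Definition \ref{yysec2.3}(b) the integral order of $H$ is the order of $\pi$ in $G(H^{\circ})$, and the $\pi$ in the statement is precisely the map arising from $\mathrm{r.ann}(\int^l_H)$. By Hypotheses \ref{yysec2.5}(b) (equivalently by Theorem \ref{yysec2.5}(e), which gives $\PIdeg(H) = io(H)$ for $H$ satisfying these hypotheses), the order of $\pi$ coincides with $\PIdeg(H)$; hence $\PIdeg(H) = 1$.

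Next I would use that an algebra of PI-degree one is commutative. Indeed, straight from the definition of PI-degree recalled in \S\ref{yysec2.3}, $\PIdeg(H) = 1$ produces an embedding $H \hookrightarrow M_1(C) = C$ into a commutative ring, so $H$ is itself commutative. Being prime and commutative, $H$ is a domain, and it remains an affine Hopf $k$-algebra with $\GKdim H = 1$.

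Finally I would invoke the structure theory recalled in \S\ref{yysec3.1}. Since $H$ is an affine commutative domain, $\Spec H$ is an irreducible --- hence connected --- affine algebraic group over the algebraically closed field $k$, of dimension equal to $\GKdim H = 1$. By \cite[Theorem 20.5]{Hu} the only two connected one-dimensional algebraic groups are $(k,+)$ and $(k\setminus\{0\},\times)$, whose coordinate Hopf algebras are $A_1 = k[X]$ and $A_2 = k[X^{\pm 1}]$. Therefore $H$ is isomorphic as a Hopf algebra to $A_1$ or $A_2$, as required.

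I expect no serious obstacle in this argument, since the two substantive ingredients --- the equality $\PIdeg(H) = io(H)$ (Theorem \ref{yysec2.5}(e), ultimately from \cite{LWZ}) and the classification of one-dimensional algebraic groups --- may both be assumed. The only points needing care are bookkeeping: verifying that the $\pi$ in the statement is precisely the map whose order computes $io(H)$, and confirming the parenthetical reformulation that $\pi$ has order one exactly when $\int^l_H \cong k$ as an $H$-bimodule. The latter holds because order one forces $\pi = \epsilon$, which is equivalent to the right $H$-action on the one-dimensional space $\int^l_H$ being trivial, i.e.\ to triviality of both module structures.
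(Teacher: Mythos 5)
Your proof is correct, and it is essentially the intended argument: the paper itself gives no proof of this proposition, quoting it from \cite[Theorem 0.2(c)]{LWZ}, but your route---using Theorem \ref{yysec2.5}(e) (valid here since Hypotheses \ref{yysec2.5}(a),(e) supply primeness and regularity) to turn $io(H)=1$ into $\PIdeg(H)=1$, then deducing commutativity directly from the definition of PI-degree recalled in \S\ref{yysec2.3}, and finally invoking the classification of connected one-dimensional algebraic groups recalled in \S\ref{yysec3.1}---is exactly the derivation the surrounding text is set up to support. Your handling of the two bookkeeping points (that the $\pi$ of the statement is precisely the integral-derived map whose order is $io(H)$, and that order one is equivalent to $\int^l_H\cong k$ as a bimodule because it forces $\pi=\epsilon$) is also sound.
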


\subsection{Dihedral group algebra}
\label{yysec3.2}
Let ${\mathbb D}$ denote the infinite dihedral group
$\langle g,x | g^2=1, gxg=x^{-1}\rangle$. The group algebra
$k{\mathbb D}$ is an affine cocommutative Hopf
algebra, which is prime by \cite[Theorem 4.2.10]{Pa}. Being
a skew group algebra of a finite group over the
Laurent polynomial algebra $k[x^{\pm 1}],$ $k{\mathbb D}$ has
GK-dimension one, \cite[Proposition 8.2.9(iv)]{MR}.
By \cite[Theorem 10.3.13]{Pa}, $k{\mathbb D}$ is regular if
and only if the characteristic of $k$ is different
from 2. By cocommutativity,
$$ im(k{\mathbb D}) = 1.$$

Since it is a free module of rank 2 over $k[x^{\pm 1}]$,
$k{\mathbb D} \subseteq M_2(k[x^{\pm 1}])$, so that
$\PIdeg(k{\mathbb D}) \leq 2$. As the algebra
is not commutative,
$$\PIdeg(k{\mathbb D}) = 2.$$
Either by direct calculation, or using \cite[Lemma 6.6]{BZ}, one
sees that, as a \emph{right} module,
\begin{eqnarray}
\int^l_{k\mathbb{D}} \cong k\mathbb{D}/\langle x-1, g+1 \rangle.
\label{yyE3.2.1}
\end{eqnarray}

\noindent
\textbf{Case 1:}
Suppose that the characteristic of $k$ is not 2. It follows from
\eqref{yyE3.2.1} that $G^l_{\pi}= G^r_{\pi} = \{1,\tau \} \cong C_2$,
where $\tau(x) = x$ and $\tau(g) = -g.$ Thus the classical components
(necessarily equal on account of cocommutativity), are the elements
fixed by $\tau,$ namely
$$ (k\mathbb{D})_0 =(k\mathbb{D})^l_0 =(k\mathbb{D})^r_0=
k[x^{\pm 1}]. $$ Finally, $io(k\mathbb{D})=2,$ and $J_{iq} =
(x-1)k\mathbb{D}$, so that $(k\mathbb{D})_{iq} = k\langle g
\rangle$, the group algebra of the cyclic group of order 2.

\medskip
\noindent \textbf{Case 2:} Now let $k$ have characteristic 2. Then
\eqref{yyE3.2.1} shows that $k\mathbb{D}$ is unimodular, so
$G^r_{\pi} = G^l_{\pi} = \{1\}$. Thus $io(k\mathbb{D})=1,$ and the
classical components all equal $k\mathbb{D},$ with $J_{iq}$ equal to
the augmentation ideal and $(k\mathbb{D})_{iq} = k.$

\subsection{Taft algebras}
\label{yysec3.3} \cite[Examples 2.7, 7.3]{LWZ} Let $n$ and $t$ be
integers with $n$ greater than 1, and coprime to the characteristic
of $k$ if the latter is positive, and with $0 \leq t \leq n-1.$ Fix
a primitive root $n$th root $\xi$ of $1$ in $k$. Let $H:=H(n,t,\xi)$
be the $k$-algebra generated by $x$ and $g$ subject to the relations
$$g^n=1, \quad\text{and}\quad xg=\xi gx.$$
Then $H$ is a Hopf algebra with coalgebra structure given by
$$\Delta(g)=g\otimes g, \; \epsilon(g)=1,
\quad\text{and}\quad \Delta(x)=x\otimes g^t+1\otimes x, \;
\epsilon(x)=0,$$ and with $$S(g) = g^{-1} \textit{ and } S(x) = -x
g^{-t}.$$ Thus one can think of $H(n,t,\xi)$ either as a (slightly
generalised) factor of the quantum Borel of $\mathfrak{sl}(2,k)$ at
an $n$th root of unity \cite[I.3.1]{BG2}, or as a limit of a family
of finite dimensional Taft algebras, \cite[1.5.6]{Mo}.

Viewing $H(n,t,\xi)$ as the skew group algebra $k[x] \ast \langle g
\rangle$ lets us see easily that it is affine noetherian, prime,
hereditary, and has GK-dimension one \cite{LWZ}. An easy calculation
shows that $Z(H) = k[x^n],$ so that
$$ \PIdeg(H(n,t,\xi)) = n.$$ We leave to the reader the
routine proof of the

\begin{proposition}
Let $n,n',t,t'$ be integers with $n,n'$ greater than 1, and coprime
to the characteristic of $k$ if the latter is positive, and with $0
\leq t,t' \leq n-1.$ Let $\xi$ and $\eta$ be two primitive $n$th
roots of 1 and $\xi'$ a primitive $n'$th root of 1.
\begin{enumerate}
\item
As algebras, $H(n,t,\xi) \cong H(n',t',\xi')$ if and only if $n = n'.$
\item
As bialgebras, $H(n,t,\xi) \cong H(n',t',\xi)$ if and only if $n = n'$
and $t = t'.$
\item
If $\xi=\eta^v$ for some $1\leq v\leq n-1$,
then $H(n,t,\xi)\cong H(n,vt,\eta)$ as Hopf algebras.
\end{enumerate}
\end{proposition}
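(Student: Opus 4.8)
The plan is to treat the three parts separately, disposing of the algebra and Hopf isomorphism statements first and reserving the bialgebra statement, which is the real content, for last. For part (a), the forward implication is immediate from the fact that the PI-degree is an isomorphism invariant of an algebra: since $\PIdeg(H(n,t,\xi))=n$ (as recorded above, $Z(H)=k[x^n]$), an algebra isomorphism $H(n,t,\xi)\cong H(n',t',\xi')$ forces $n=n'$. For the converse I note that the algebra structure does not involve $t$ at all, so it suffices to produce an algebra isomorphism $H(n,t,\xi)\to H(n,t',\xi')$ for arbitrary $t,t'$. As $\xi$ and $\xi'$ are both primitive $n$th roots of unity, there is a unit $a$ modulo $n$ with $(\xi')^a=\xi$; I would send $x\mapsto x'$ and $g\mapsto (g')^a$. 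The relations $g^n=1$ and $xg=\xi gx$ are preserved (the second because $x'(g')^a=(\xi')^a(g')^ax'=\xi(g')^ax'$), and the monomials $x^ig^j$ go to $(x')^i(g')^{aj}$, a $k$-basis of the target since $\gcd(a,n)=1$; hence the map is a bijective algebra homomorphism.

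Part (c) is handled by an explicit construction. Writing $x',g'$ for the generators of $H(n,vt,\eta)$, I would define $\phi$ by $x\mapsto x'$ and $g\mapsto (g')^v$; here $\gcd(v,n)=1$ since $\xi=\eta^v$ is a primitive $n$th root. The relation $xg=\xi gx$ is preserved because $x'(g')^v=\eta^v(g')^vx'=\xi(g')^vx'$, and $(g')^{vn}=1$, so $\phi$ is an algebra map. It is also a coalgebra map: $\Delta\phi(g)=\phi(g)\otimes\phi(g)$, while $\Delta\phi(x)=x'\otimes(g')^{vt}+1\otimes x'=\phi(x)\otimes\phi(g)^t+1\otimes\phi(x)$ matches $\Delta(x)=x\otimes g^t+1\otimes x$. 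Thus $\phi$ is a bialgebra map, hence automatically commutes with the antipode; and exactly as in part (a) it carries a $k$-basis to a $k$-basis, so it is an isomorphism.

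The substance is part (b). Here $\xi$ is a common primitive $n$th and $n'$th root of unity, which already forces $n=n'=\operatorname{ord}(\xi)$, and the \textbf{if} direction is trivial, so the task is to show a bialgebra isomorphism $\phi:H(n,t,\xi)\to H(n,t',\xi)$ forces $t=t'$. First, $\phi$ preserves group-like elements, and in each Taft algebra these are exactly the powers of $g$; hence $\phi$ restricts to a group isomorphism $\langle g\rangle\to\langle g'\rangle$, giving $\phi(g)=(g')^a$ with $\gcd(a,n)=1$. Applying $\phi$ to $\Delta(x)=x\otimes g^t+1\otimes x$ shows $\phi(x)$ is a nontrivial $(1,(g')^{at})$-skew primitive, meaning $\Delta\phi(x)=\phi(x)\otimes(g')^{at}+1\otimes\phi(x)$. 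The key step is then to classify the skew primitives of $H(n,t',\xi)$: using the $\mathbb{N}$-grading by powers of $x'$, which is respected by $\Delta$, skew primitives decompose into homogeneous pieces, and a direct computation of $\Delta((x')^d(g')^j)$ shows that for $h\neq 1$ the nontrivial $(1,h)$-skew primitives are precisely the scalar multiples of $x'$ (so $h=(g')^{t'}$), while every $(1,\cdot)$-skew primitive of degree $\geq 2$ is in fact genuinely primitive, of type $(1,1)$. Combining the two descriptions of $\phi(x)$ with the conjugation relation $\phi(g)\phi(x)\phi(g)^{-1}=\xi^{-1}\phi(x)$ coming from $gxg^{-1}=\xi^{-1}x$ — which forces each degree-$d$ homogeneous component of $\phi(x)$ to satisfy $ad\equiv 1\pmod n$ — then pins down $a\equiv 1$ and $at\equiv t'\pmod n$, whence $t=t'$. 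The degenerate case $t=0$, where $\phi(x)$ is genuinely primitive, I would dispatch separately: a nonzero primitive lives only in degrees divisible by $\ell=\operatorname{ord}(\xi^{t'})$, and $ad\equiv 1$ with $\ell\mid d$ and $\gcd(a,n)=1$ is impossible unless $\ell=1$, i.e.\ $t'=0$.

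I expect the main obstacle to be exactly this classification of skew-primitive elements. The subtlety is that over a field containing roots of unity the relevant quantum binomial coefficients $\binom{d}{p}_{q}$, with $q=\xi^{\pm t'}$, vanish whenever $\ell\mid d$, so genuinely primitive elements such as $(x')^{\ell}$ appear in degrees $\geq 2$ and must be accounted for; this persists even in characteristic zero and is precisely why one cannot simply read off $t$ from ``smallest degree of a skew primitive.'' The saving grace is that these extra primitives all carry the \emph{trivial} group-like type $(1,1)$, so they never interfere with the type $(1,(g')^{t'})$ that is forced when $t\neq 0$. Keeping careful track of which group-like types occur in each $x'$-degree is the only genuinely delicate point; the remaining verifications are routine.
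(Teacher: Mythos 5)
The paper offers no proof of this proposition at all --- it is explicitly ``left to the reader'' as routine --- so there is nothing of the authors' to compare your argument against; the only question is whether it stands on its own, and it does. Parts (a) and (c) are fine: $\PIdeg$ is a $k$-algebra isomorphism invariant equal to $n$ (since $Z=k[x^n]$ and the rank over the center is $n^2$), and your maps $g\mapsto (g')^a$ and $g\mapsto (g')^v$ are well defined and carry the standard basis to a basis because $a$ and $v$ are units modulo $n$. The substance is indeed the ``only if'' of (b), and your skew-primitive analysis is correct, including the two delicate points. First, for $h=(g')^{t'}\neq 1$ the full space of $(1,h)$-skew primitives is $kx'+k\bigl((g')^{t'}-1\bigr)$, i.e.\ multiples of $x'$ only modulo the trivial skew primitives; your conjugation step absorbs this correctly, since the degree-zero component is fixed by conjugation by $(g')^a$ while the relation $\phi(g)\phi(x)\phi(g)^{-1}=\xi^{-1}\phi(x)$ requires it to scale by $\xi^{-1}\neq 1$ (your condition $ad\equiv 1\pmod n$ at $d=0$), forcing it to vanish; then $a\equiv 1$ from the degree-one component gives $t\equiv at\equiv t'\pmod n$. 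Second, the case $t=0$: genuine primitives sit only in degrees divisible by $\ell=\operatorname{ord}(\xi^{t'})$ (in characteristic $p>0$ in degrees $\ell p^j$, still divisible by $\ell$, which is the only place positive characteristic intrudes), and $ad\equiv 1\pmod n$ forces $\gcd(d,n)=1$, which together with $\ell\mid d$ and $\ell\mid n$ gives $\ell=1$, i.e.\ $t'=0$. The remaining ingredient --- that the group-likes of a Taft algebra are exactly $\langle g\rangle$, so $\phi(g)=(g')^a$ with $\gcd(a,n)=1$ --- is standard via the $x$-grading and linear independence of group-likes. In short: a complete and correct filling-in of the proof the paper declined to write.
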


Recall that $H=H(n,t,\xi)$. Using \cite[Lemma 6.6]{BZ} we find that
$$ \int^l_H \cong H/\langle
x, g-\xi^{-1} \rangle. $$The corresponding homomorphism $\pi$ yields
left and right winding automorphisms \begin{eqnarray*}
 \Xi^r_{\pi}: \begin{cases}
 x \mapsto \xi^{-t}x \\ g \mapsto \xi^{-1}g
 \end{cases} \textit{ and } \qquad
\Xi^l_{\pi}: \begin{cases}
 x \mapsto x \\ g \mapsto \xi^{-1}g,
 \end{cases}
\end{eqnarray*}
so that $G^l_{\pi} = \langle \Xi^l_{\pi} \rangle$ and $G^r_{\pi} =
\langle \Xi^r_{\pi} \rangle$ have order $n$. Hence, as predicted by
Theorem \ref{yysec2.5}(e), $$io(H(n,t,\xi)) = n.$$ The integral
annihilator $J_{iq}$ is $\cap\{\langle x, g-\xi^i \rangle  : 0 \leq
i \leq n-1 \},$ which equals $xH.$ Thus $H_{iq} = H/J_{iq} \cong k
\langle g \rangle.$ As for the right and left classical components,
$$H^r_0 = H^{G^r_{\pi}} = k[xg^{-t}], \textit{ and } H^l_0 =
H^{G^l_{\pi}} = k[x],$$ so one checks that, as predicted by
Proposition \ref{yysec2.1}(c), $H^r_0$ is a left coideal subalgebra
of $H$ and $H^l_0$ is a right coideal subalgebra.

Let $m:=n/\gcd(n,t)$. Then $G^l_{\pi} \cap G^r_{\pi} = \langle
(\Xi^l_{\pi})^m \rangle ,$ so that the integral minor of
$H(n,t,\xi)$ is
\begin{eqnarray}
 im(H(n,t,\xi)) = m.
\label{yyE3.3.1}
\end{eqnarray}
This shows that $G_{\pi} \cong C_n \times C_m$. Now the classical
component
$$H_0 = H^{G_{\pi}} = H^l_0 \cap H^r_0 = k[x^m],$$
and since this is a Hopf subalgebra of $H$ we find that
$\widetilde{H_0}$, the largest Hopf subalgebra of the classical
component, is $H_0=k[x^m]$. Note that
$$Z(H) = k[x^n] \subseteq k[x^m] = \widetilde{H_0},$$
so that $\widetilde{H_0}$ is central if and only if the integral
minor is $n$ - that is, if and only if $\gcd (n,t)= 1$.
Nevertheless, $\widetilde{H_0}$ can easily be checked to be
\emph{normal} in $H$, and to be a maximal commutative Hopf
subalgebra of $H.$

\subsection{Generalised Liu algebras}
\label{yysec3.4} We start with a presentation of generalised Liu
algebras that is convenient for the proof of some algebraic
properties and then convert it into another form in which the
generators of the algebra are more rigid. Both presentations are
helpful in understanding these algebras.

Let $n$ be an integer greater than 1, with $n$ coprime to the
characteristic $p$ of $k$ if $p > 0$. Fix a primitive $n$th root
$\theta$ of one in $k$. Let $w$ be a positive integer with $\gcd
(n,w) := b \geq 1.$ Write $ n = n'b,$ $w = w'b.$ Define a
$k-$algebra $B$ with generators $h^{\pm 1}, f$ and $y,$ and
relations
$$ hh^{-1} = h^{-1}h = 1, \qquad f^b = 1, \qquad hf = fh;$$
$$\begin{aligned}
yh &= \theta hy;\\
yf &= \theta^{n'}fy; \\
y^n - 1 + h^{nw'} &= 0.
\end{aligned}$$

We shall show that $B$ admits a structure of Hopf algebra, but first
we develop some of its properties as a $k-$algebra. Start with the
group algebra
$$A := k\langle h^{\pm 1},\,f \rangle \cong k(C_{\infty} \times
C_b).$$ Let $\sigma$ be the $k-$algebra automorphism of $A$ defined
by $\sigma (h) = \theta h,$ $\sigma (f) = \theta^{n'} f,$ and form the
skew polynomial algebra
$$ C := A[y; \sigma].$$
Since $|\sigma | = n$ and $A$ is commutative, $C$ has PI-degree at
most $n.$ One easily confirms that $A^{\sigma}$
is the group algebra $k\langle x \rangle$, where
\begin{eqnarray}
x := h^{n'}f^{-1},\label{yyE3.4.1}
\end{eqnarray}
so that $$Z(C) = k[x^{\pm 1}, y^n].$$
Thus $y^n - 1 + x^w \in Z(C),$ so that we can form $$B := C/\langle
y^n - 1 + x^w  \rangle.$$ Abusing notation, we continue to write
$h,f,y,x$ for the images of these elements in $B$. Thus, in $B,$
\begin{eqnarray}
y^n = 1- x^w = 1 - h^{n'w} = 1 - h^{nw'}.\label{yyE3.4.2}
\end{eqnarray}
So $B$ has $k-$basis
\begin{eqnarray}
\{h^if^jy^l : i \in \mathbb{Z}, 0 \leq j
\leq b-1, 0 \leq l \leq n-1 \}.\label{yyE3.4.3}
\end{eqnarray} This shows
that \begin{eqnarray} B \textit{ is a finitely generated free (left and
right) }
k\langle h^{\pm 1}\rangle \textit{-module}, \label{yyE3.4.4}
\end{eqnarray} which permits us to confirm firstly that
\begin{eqnarray}
k \langle h^{\pm 1} \rangle \setminus \{0 \} \textit{
consists of regular elements of } B ,
\label{yyE3.4.5}
\end{eqnarray}
and hence, in view of \eqref{yyE3.4.2},
\begin{eqnarray} y \textit{ is not a zero divisor in } B.
\label{yyE3.4.6}
\end{eqnarray} Finally, it follows from \eqref{yyE3.4.2}
and \eqref{yyE3.4.3} that
\begin{eqnarray}
Z(B) = k[x^{\pm 1}].
\label{yyE3.4.7}
\end{eqnarray}
It's clear from its construction that $B$ is a noetherian algebra
with $\GKdim(B) = 1.$

We turn now to the coalgebra structure of $B.$ For the counit,
define
$$\epsilon(f)= \epsilon(h)=1,\quad \epsilon(y)=0;$$
trivially, this is an algebra homomorphism. For the coproduct, we
propose to define$$
\begin{aligned} \Delta(h)&=h\otimes h,\quad
\Delta(f)=f\otimes f,\\
\Delta(y)&=y\otimes g + 1\otimes y,\end{aligned}$$ for a suitable
choice of  a group-like element $g \in A = \langle h,f \rangle.$
Since $g$ is group-like, $g=h^a f^b$ for some integers $a,b$. Now we
require
$$\Delta (y)^n = \Delta (y^n) = \Delta(1 - x^w) = (1- x^w) \otimes x^w + 1
\otimes (1-x^w).$$
Thus, by the $q-$binomial theorem
\cite[I.6.1]{BG2}, we require
\begin{eqnarray} \quad \quad g^n = x^w = h^{w'n} \quad
\textit{ and } \quad \sigma (g) = \xi g
{\text{ for a primitive $n$th root $\xi$ of 1.}}
\label{yyE3.4.8}
\end{eqnarray}
The first of these constraints implies that $g = h^{w'}f^i$ for some
$i$, $0 \leq i \leq b-1.$ Then $\sigma (g) = \theta^{w' + n'i}g$ so
the second part of \eqref{yyE3.4.8} requires that
\begin{eqnarray}
\gcd (n, w' + n'i) = 1. \label{yyE3.4.9}
\end{eqnarray}
By the elementary lemma at the end of this subsection, such a
solution $i = i_0$ to \eqref{yyE3.4.9} exists, and accordingly we
set
\begin{eqnarray}
g := h^{w'}f^{i_0}\label{yyE3.4.10}
\end{eqnarray}
in the definition of $\Delta (y).$ With these definitions it is
routine to check that $\Delta$ is an algebra homomorphism, that
it satisfies the coassociativity axioms, and that $\epsilon$ is a
counit. Finally, we define
$$ S(h)=h^{-1},\quad S(f)=f^{-1},\quad S(y)=-yg^{-1}
$$
and check easily that this extends to an anti-automorphism of $B$
satisfying the antipode axiom.

We show next that the algebra $B$ is also generated by $x^{\pm
1},g,y$. Since $\gcd(w'+n'i_0,n)=1$, there are integers $u$ and $v$
such that $u(w'+n'i_0)+v n=1$. Then we calculate that
\begin{eqnarray}
\quad \quad \quad g^{u+vn}x^{(u i_0 - vb(w' - 1))}=
(h^{w'} f^{i_0})^{u+vn}(h^{n'}f^{-1})^{(u i_0 - vb(w' - 1))}
=h.\label{yyE3.4.11}
\end{eqnarray}
Hence $h$ is generated by $g$ and $x^{\pm
1}$, noting that $g^{-1} = g^{n-1}x^{-w}$. Since $f=x^{-1}h^{n'}$,
$B$ is generated by $y,g$ and $x^{\pm 1}$. Combining the above
assertions, $B$ is generated by $x^{\pm 1},g,y$, subject to the
relations
\begin{eqnarray}
\begin{cases}
yg = \xi gy, & \quad \quad y^n = 1-x^w = 1 -
g^n \\
xy=yx, & \quad \quad xg=gx \\ x x^{-1} = 1
\end{cases}
\label{yyE3.4.12}
\end{eqnarray}
where $\xi=\theta^{w' + n' i_0}$ is a primitive $n$th root of 1.
The coalgebra structure of $B$ is determined by the fact that $x$
and $g$ are group-like and $y$ is $(g,1)$-primitive. The Hopf
algebra structure of $B$ is uniquely determined by these data and
therefore $B$ is denoted by $B(n,w,\xi)$.

Note that two different solutions $i_0$ to \eqref{yyE3.4.9} give
distinct $\xi:=\theta^{w'+n'i_0}$ and different formulas of $g$ in
terms of $h$ and $f$ \eqref{yyE3.4.10}. And, replacing $h$ by $hf^j$
for some integer $j$ satisfying $\gcd(1+jn',n)=1$ changes one $i_0$
to another. This means that two different choices of the pair 
$(\theta,i_0)$ may yield the same Hopf algebra $B$.

One advantage of the second presentation of $B$ (see
\eqref{yyE3.4.12}) is that the ordered set $\{n,w,\xi,x,g\}$ can be
recovered uniquely by the Hopf algebra structure of $B$ (see (j) of
the next theorem). In this first presentation, however, $h$ (and
$\theta$) can not be determined uniquely by the Hopf structure of
$B$.

We are now ready to list the main properties of the algebras $B =
B(n,w,\xi)$:

\begin{theorem}
Let the $k-$algebra $B = B(n,w,\xi)$ be defined as above.
Assume that the characteristic of $k$ does not divide $n$.
\begin{enumerate}
\item
As a $k-$algebra, $B$ is generated by $y,h^{\pm 1},f,$ with relations
as stated at the beginning of this section; and it is also generated
by $y,g$ and $x^{\pm 1}$, with relations in \eqref{yyE3.4.12}.
\item
$B$ is an affine noetherian Hopf
algebra of Gel'fand-Kirillov dimension $1,$ with center
$k\langle x^{\pm 1}\rangle.$
\item
$B$ is prime.
\item
$\PIdeg( B) = n.$
\item
$B$
has finite global dimension if and only if $w$ is a unit in
$k$. In this case, $\gldim B = 1$.
\item
$io(B)= im(B)=n.$
\item
The fixed subring $B^{\Xi_{\pi}^l}$ is $k\langle y, x^{\pm 1}
\rangle;$ and $B^{\Xi_{\pi}^r}= k\langle yg^{-1}, x^{\pm 1}
\rangle.$ Thus $$B_0 := B^{\Xi_{\pi}^l} \cap B^{\Xi_{\pi}^r} =
k[x^{\pm 1}] = Z(B).$$
\item
The group of group-like elements of $B$ is $\langle
h,f \rangle,$ isomorphic to $C_{\infty} \times C_b.$
\item
If $z\in B$ is a nonzero $(a,1)$-primitive element for some
group-like element $a\neq 1$, then $a=g$ and $z\in ky+k(1-g)$.
\item
Let $B(\bar{n},\bar{w},\bar{\xi})$ be another Hopf algebra
of the same kind with generators $\bar{x}^{\pm 1},\bar{g}$,
and $\bar{y}$.
If $\phi: B(\bar{n},\bar{w},\bar{\xi})\to B(n,w,\xi)$
is a Hopf algebra isomorphism, then $n=\bar{n},w= \bar{w},
\xi=\bar{\xi}$ and $\phi(\bar{x})=x$,
$\phi(\bar{g})=g$, $\phi(\bar{y})=\eta y$ for some
$n$th root $\eta$ of $1$.
\end{enumerate}
\end{theorem}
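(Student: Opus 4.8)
My plan is to prove the theorem part by part, leaning on the construction already carried out before the statement. Parts (a) and (b) are essentially bookkeeping: equation \eqref{yyE3.4.11} expresses $h$ through $g$ and $x^{\pm 1}$ and \eqref{yyE3.4.12} records the second presentation, giving (a), while \eqref{yyE3.4.7}, the basis \eqref{yyE3.4.3}, and the coalgebra verification preceding the theorem already yield that $B$ is an affine noetherian Hopf algebra of GK-dimension one with center $k\langle x^{\pm 1}\rangle$, which is (b). For (c) and (d) I would localize at the central domain $Z(B)=k[x^{\pm 1}]$. From \eqref{yyE3.4.3} together with (a), $B$ is a free $Z(B)$-module with basis $\{g^jy^l:0\le j,l\le n-1\}$, so $B$ is prime if and only if $Q:=B\otimes_{Z(B)}k(x)$ is simple. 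Using $g^n=x^w$, $y^n=1-x^w$ and $yg=\xi gy$ from \eqref{yyE3.4.12}, $Q$ is precisely the symbol (cyclic) algebra $(x^w,\,1-x^w)_\xi$ over $k(x)$; since $x^w$ and $1-x^w$ are nonzero and $k(x)$ contains the primitive $n$th root $\xi$, this is central simple of degree $n$. Hence $B$ is prime (c), and $\PIdeg(B)=\sqrt{\rank_{Z(B)}B}=\sqrt{n^2}=n$ (d).

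Part (e) is where the real work lies, and its harder half is the main obstacle. By \eqref{yyE3.4.6} the element $y$ is a nonzerodivisor, and the relations show $yB=By$, so $y$ is a \emph{normal} nonzerodivisor and $\bar B:=B/yB$ is a ring; setting $y=0$ in \eqref{yyE3.4.12} forces $x^w=1=g^n$, whence $\bar B\cong k(C_w\times C_n)$, the group algebra of a finite abelian group of order $wn$. I would combine this with the criterion of \cite{LL}, that a Hopf algebra has finite global dimension if and only if the trivial module $k$ has finite projective dimension, and with the change-of-rings identity for the normal nonzerodivisor $y$, giving $\prdim_B k=1+\prdim_{\bar B}k$. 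Since $k$ is a $\bar B$-module, $\prdim_{\bar B}k<\infty$ if and only if $\bar B$ is semisimple, i.e. $\operatorname{char}k\nmid wn$; as $\operatorname{char}k\nmid n$ throughout, this means $\operatorname{char}k\nmid w$, i.e. $w$ is a unit. When finite, the global dimension must be $1$ because $\GKdim B=1$ and, by \cite{WZ1} and the Cohen--Macaulay property \cite[Theorem A]{BG1}, Krull dimension (a lower bound for global dimension) coincides with GK-dimension. The delicate point is the ``only if'' direction: I must ensure the normalizing change-of-rings equality genuinely holds for the non-central $y$, so that the infinite global dimension of the modular group algebra $kC_w$ when $p\mid w$ propagates to $B$.

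For (f) and (g) I would compute the left integral $\int^l_B$ explicitly via \cite[Lemma 6.6]{BZ}, exactly as for the Taft algebras, obtaining $\pi$ with $\pi(x)=1$, $\pi(y)=0$, and $\pi(g)$ a primitive $n$th root of unity. Substituting into $\Xi^l_\pi(a)=\sum\pi(a_1)a_2$ and $\Xi^r_\pi(a)=\sum a_1\pi(a_2)$ shows $\Xi^l_\pi$ fixes $x$ and $y$ and scales $g$, while $\Xi^r_\pi$ scales $g$ and $y$ but fixes $x$ and $yg^{-1}$. Reading these actions on the monomial basis $\{x^ig^jy^l\}$ gives the fixed rings $B^{\Xi^l_\pi}=k\langle y,x^{\pm 1}\rangle$ and $B^{\Xi^r_\pi}=k\langle yg^{-1},x^{\pm 1}\rangle$ with intersection $k[x^{\pm 1}]=Z(B)$, proving (g); and since $\Xi^l_\pi$ has order $n$ while $G^l_\pi\cap G^r_\pi=1$ (the two agree on $x,g$ but disagree on $y$), we get $io(B)=im(B)=n$, which is (f). For (h) I would filter $B$ by $y$-degree, $F_m:=\operatorname{span}\{h^if^jy^l:l\le m\}$; the formulas $\Delta(h)=h\otimes h$, $\Delta(f)=f\otimes f$ and $\Delta(y^l)=(y\otimes g+1\otimes y)^l$ show this is a coalgebra filtration with $F_0=k\langle h,f\rangle$, so the coradical lies in $k\langle h,f\rangle$; as the latter is cosemisimple, $B$ is pointed with $G(B)=\langle h,f\rangle\cong C_\infty\times C_b$.

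Parts (i) and (j) are linked. For (i), a nonzero $(a,1)$-primitive lies in the first coradical term, hence in $F_1=\operatorname{span}\{h^if^j,\,h^if^jy\}$; imposing $\Delta z=z\otimes a+1\otimes z$ and comparing coefficients against $\Delta(h^if^jy)=h^if^jy\otimes h^if^jg+h^if^j\otimes h^if^jy$ forces the $y$-carrying part to be a scalar multiple of $y$ with $a=g$, and the group-like part to be a multiple of the trivial skew-primitive $1-g$; thus $a=g$ and $z\in ky+k(1-g)$. For (j), $n=\bar n$ since $\PIdeg$ is an isomorphism invariant by (d); $\phi$ carries group-likes to group-likes, so by (h) it restricts to an isomorphism $\langle\bar h,\bar f\rangle\to\langle h,f\rangle$, giving $b=\bar b$, and a central group-like of infinite order must map to $x^{\pm 1}$. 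By (i) the unique group-like supporting nontrivial skew-primitives is $g$, so $\phi(\bar g)=g$ and $\phi(\bar y)=\eta y+c(1-g)$; substituting into $\bar y\bar g=\bar\xi\bar g\bar y$ forces $\xi=\bar\xi$ and $c=0$, whence $\phi(\bar y)=\eta y$. Applying $\phi$ to $\bar y^{\bar n}=1-\bar x^{\bar w}$ then yields $\eta^n(1-x^w)=1-x^{\pm\bar w}$; comparing the constant term gives $\eta^n=1$, and comparing the remaining term forces the exponent sign to be positive and $w=\bar w$, so $\phi(\bar x)=x$. This establishes the rigidity statement and completes the theorem.
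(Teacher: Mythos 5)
Your treatment of most parts is sound, and in two places you genuinely improve on the paper's route: for (c) and (d) you localize at $Z(B)=k[x^{\pm 1}]$ and identify $B\otimes_{Z(B)}k(x)$ with the cyclic (symbol) algebra $(x^w,\,1-x^w)_{\xi}$ over $k(x)$, which is central simple of degree $n$; this gives primeness and the PI-degree in one stroke, whereas the paper proves (c) by a hands-on argument with $y$-lengths of ideal elements and the idempotents of $k\langle f\rangle$, and then computes the rank for (d). Your coradical-filtration proof of (h) is also a clean substitute for the paper's terse appeal to linear independence of group-likes. However, there are two genuine gaps.

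First, the ``only if'' half of (e). You rely on the identity $\prdim_B k=1+\prdim_{\bar B}k$ for $\bar B=B/yB$, used in the direction ``$\prdim_{\bar B}k=\infty \Rightarrow \prdim_B k=\infty$''. That implication is false, and not because $y$ fails to be central: the first change-of-rings theorem asserts the equality only under the hypothesis that $\prdim_{\bar B}M$ is \emph{finite}. For a counterexample to the unrestricted identity take $B=k[t]$, the central regular element $t^2$, and $M=k$: then $\prdim_{k[t]/(t^2)}k=\infty$ while $\prdim_{k[t]}k=1$. So the infinite global dimension of the modular group algebra $\bar B$ when $p\mid w$ cannot be propagated to $B$ by this route. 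The paper argues differently: by \eqref{yyE3.4.2}, $B$ is a free module over the \emph{commutative} subalgebra $R=k\langle h^{\pm n},y\rangle\cong k[U^{\pm 1},V]/(U^{w'}-1+V^{n})$, which the Jacobian criterion shows to be singular when $p\mid w'$; and finiteness of $\gldim B$ would descend to $R$ (restrict a finite projective $B$-resolution of $B\otimes_R M$ to $R$, noting that projective $B$-modules restrict to projective $R$-modules and that $M$ is an $R$-direct summand of $B\otimes_R M$), a contradiction. You need an argument of this kind.

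Second, the reduction step in (i). You place a nonzero $(a,1)$-primitive $z$ in $F_1$ on the grounds that it ``lies in the first coradical term, hence in $F_1$''. Skew primitives do lie in the term $C_1$ of the coradical filtration, but the inclusion $C_1\subseteq F_1$ is \emph{not} a formal consequence of $\{F_m\}$ being a coalgebra filtration, nor even a coalgebra grading: regrade $k[t]$ (with $t$ primitive) so that $t$ sits in degree $2$; this is a perfectly good coalgebra grading whose induced filtration has $F_1=k$, yet the primitive $t$ lies outside $F_1$. Showing that skew primitives of $B$ have $y$-degree at most $1$ is precisely the nontrivial content here, and it is what the paper proves directly: write $z=\sum_{i=0}^{r}z_iy^i$ via \eqref{yyE3.4.3} with $z_r\neq 0$, suppose $r\geq 2$, and compare the components of $y$-bidegree $(1,r-1)$ in the two expressions for $\Delta(z)$; this yields $\binom{r}{1}_{\xi}\,\Delta(z_r)\,(y\otimes gy^{r-1})=0$, forcing $z_r=0$, a contradiction. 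Once $z\in F_1$ is secured by such an argument, your coefficient comparison in (i), and its use in (j), go through and agree with the paper's.
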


\begin{proof} (a) and (b) are clear from the discussion above.

(c) In view of \eqref{yyE3.4.5}, to show that $B$ is prime it suffices to
prove that, if $I$ is a non-zero ideal of $B$, then
\begin{eqnarray}
I \cap k \langle h^{\pm 1} \rangle \neq 0.
\label{yyE3.4.13}
\end{eqnarray}
So let $\alpha$ be a non-zero element of $I$. Using \eqref{yyE3.4.3} we
can uniquely write $$ \alpha = \sum_{l=0}^{n-1}\alpha_ly^l, $$ with
$\alpha_l \in k\langle h^{\pm 1},f \rangle$ and choose $\alpha$ so
that the number of non-zero $\alpha_l$ occurring is as small as
possible for non-zero elements of $I$. We assume for a contradiction
that this number is greater than one. Since $y^n = 1 -
h^{nw'},$ we can multiply $\alpha$ on the right by a suitable
power of $y$ to ensure that $\alpha_0 \neq 0.$ Now $\alpha h - h
\alpha \in I$ is non-zero and has strictly smaller $y$-length than
$\alpha$, contradicting our choice of $\alpha$. Therefore $$ I \cap
k \langle h^{\pm 1}, f \rangle \neq 0.$$

Now $\theta^{n'}$ is a primitive $b$th root of 1 in $k$, so the $b$
primitive idempotents $$ e_j :=
\frac{1}{b}\sum_{i=0}^{b-1}\theta^{n'ji}f^i $$ form a $k$-basis of
$k\langle f \rangle.$ Fix $$0 \neq \beta = \sum_{j=0}^{b-1} \gamma_j
e_j \in I \cap k \langle h^{\pm 1}, f \rangle. $$ There exists $j$
such that $0 \neq \beta e_j = \gamma_j e_j.$ That is, relabelling,
there exists $j$ with $0 \leq j \leq b-1$ and $0 \neq \gamma \in k
\langle h^{\pm 1} \rangle$ such that $\gamma e_j \in I.$ Notice now
that $e_j y = y e_{j-1}.$ It follows that, for all $t = 0, \ldots ,
b-1,$
$$ I \ni y^{n-t} \gamma e_j y^t = y^n \gamma_t e_{j-t} =
(1 - h^{nw'})\gamma_t e_{j-t}, $$
where $0 \neq \gamma_t \in k \langle h^{\pm 1} \rangle.$ Multiplying
these $b$ non-zero elements of $I$ on the left by suitable elements
of the commutative domain $k \langle h^{\pm 1} \rangle,$ we find
that there exists $0 \neq c \in k \langle h^{\pm 1} \rangle$ such
that $ce_{j-t} \in I$ for all $t = 0, \ldots , b-1.$ Adding these
elements, \eqref{yyE3.4.13} follows.

(d) Since $B$ is prime by (c) we can work in the simple artinian
quotient ring $Q(B)$ of $B$. Now $Q(Z(B)) = Z(Q(B)) = k(x)$, and we
see from the definition \eqref{yyE3.4.1} of $x$ and the $k$-basis
\eqref{yyE3.4.3} that
$$\mathrm{dim}_{Q(Z)}(Q(B)) = n'bn = n^2.$$
This proves (d).

(e) Suppose first that $w$ is a unit in $k$. by \cite[Corollary
2.4]{LL} it suffices to prove that the trivial $B$-module $k$ has
projective dimension 1. Noting \eqref{yyE3.4.6} that $y$ is a normal
regular element and setting $D := B/yB,$ $D$ has $k-$basis $\{h^if^j
: 0 \leq i \leq nw' - 1, 0 \leq j \leq b-1\},$ so that
\begin{eqnarray} D \cong k(C_{n'w} \times C_b). \label{resolve}
\end{eqnarray}
it follows that the group algebra $D$ of (\ref{resolve}) is
semisimple Artinian, so we have that $\mathrm{prdim}_{B}(k) = 1$
as required.

Now suppose that $p|w,$ so that $p|w '$. Suppose for a
contradiction that $B$ has finite global dimension. It is clear from
\eqref{yyE3.4.4} that $B$ is a free module over the commutative
algebra $R := k \langle h^{\pm n}, y \rangle,$ and therefore, by
restricting $B$-resolutions to $R$-resolutions we deduce that $R$
has finite global dimension. But from \eqref{yyE3.4.2} we see that, as a
commutative $k$-algebra, $$ R \cong k \langle U^{\pm 1},V :
U^{w'} = 1 - V^n \rangle, $$ and the Jacobian Criterion shows
that $R$ is singular. Thus $B$ also has infinite global dimension.

(f) By \cite[Lemma 2.6]{LWZ}, the left homological integral
$\int^l_B$ of $B$ is the the 1-dimensional right module $B/\langle
y, h-\theta^{-1},f-\theta^{-n'} \rangle.$ Hence the left and right
winding automorphisms associated to $\int^l_B$ are
$$
\Xi_{\pi}^l: \begin{cases} y\mapsto y,\\
h\mapsto \theta^{-1}h ,\\ f\mapsto \theta^{-n'}f
\end{cases}
\quad {\text{and}}\quad
\Xi_{\pi}^r: \begin{cases} y\mapsto \theta^{-(w' + n'i_0)}y,\\
h\mapsto \theta^{-1}h ,\\ f\mapsto \theta^{-n'}f.
\end{cases}
$$
Clearly these automorphisms have order $n,$  whence $io(B)=n$.
Moreover our choice of $i_0$ as a solution of \eqref{yyE3.4.9} shows
that $ G^l_{\pi} \cap G^r_{\pi} = \{1\}$. This proves
(f).

(g) This follows easily from the descriptions in (f) of the
automorphisms.

(h) It's clear that $k\langle h,f \rangle$ is the biggest sub-Hopf
algebra of $B$ which is a group algebra, so this follows from the
linear independence of distinct group-like elements
\cite[3.2.1]{Sw}.

(i) By \eqref{yyE3.4.3}, $z=\sum_{i=0}^{n-1} z_i y^i$ where $z_i\in
A:=k \langle x^{\pm 1},g\rangle=k\langle h^{\pm 1},f\rangle$. Pick
$r$ such that $z_r\neq 0$ and $z_i=0$ for all $r<i\leq n-1$. Hence
$$\begin{aligned}
\Delta(z)&=\sum_{i=0}^r \Delta(z_i) \Delta(y^i)=
\sum_{i=0}^r \Delta(z_i) \Delta(y)^i\\
&=\sum_{i=0}^r \Delta(z_i) (y\otimes g+1\otimes y)^i\\
&=\sum_{i=0}^r \Delta(z_i) (\sum_{s=0}^i y^s\otimes {i\choose s}_{\xi}
g^s y^{i-s}).
\end{aligned}
$$
Since $z$ is a $(a,1)$-primitive,
$$
\Delta(z)=z\otimes a+1\otimes z=(\sum_{i=0}^{r} z_i y^i)
\otimes a+1\otimes (\sum_{i=0}^{r} z_i y^i).$$
If $r\geq 2$, by comparing the terms of $y$-degree $(1,r-1)$ in
two expressions of $\Delta(z)$ and using \eqref{yyE3.4.3}, we have
$$\Delta(z_r)(y\otimes {r\choose 1}_{\xi} g^1 y^{r-1})=0,$$
which implies that $\Delta(z_r)=0$, and hence $z_r=0$ after applying
$\epsilon\otimes 1$. This contradicts the choice of $r$.
Therefore $z=z_0+z_1 y$. By $(a,1)$-primitiveness of $z$,
$$\Delta(z_0)+\Delta(z_1)(y\otimes g+1\otimes y)=
(z_0+z_1y)\otimes a+1\otimes (z_0+z_1y),$$
which implies that
$$\Delta(z_0)=z_0\otimes a+1\otimes z_0,
\quad \Delta(z_1)(1\otimes g)=z_1\otimes a,\quad
\Delta(z_1)=1\otimes z_1.$$
By using the coalgebra axioms several times, we obtain
$$z_1=c_1, \quad a=g, \quad z_0=c_2(1-g)$$
for some $c_1,c_2\in k$, as desired.

(j) By (d) $n=\bar{n}$. By (b), $\phi(\bar{x})$ is either $x$ or
$x^{-1}$. Let $z=\phi(\bar{y})$ and $a=\phi(\bar{g})$. Since $\bar{y}$
is $(\bar{g},1)$-primitive, $z$ is $(a,1)$-primitive. By (i),
$a=g$ and $z =c_1 y+c_2(1-g)$. Since $\{\bar{y},\bar{g}\}$ are
skew-commutative, so is the pair $\{z,a=g\}$. This implies
that $c_2=0$. The relation $y^n=1-g^n$ implies that $c_1=\eta$
is an $n$th root of $1$ and hence $\phi(\bar{y})=\eta y$.
By the relations in \eqref{yyE3.4.12}, we have
$$g^n=\phi(\bar{g}^n)=\phi(1-\bar{x}^{\bar{w}})=1-x^{\pm \bar{w}}$$
and
$$g^n=1-x^w.$$
This forces $w=\bar{w}$ and $\phi(\bar{x})=x$. Finally,
$\xi=\bar{\xi}$ follows from the first relation in
\eqref{yyE3.4.12} and the facts that $\eta y=\phi(\bar{y})$ and
$g=\phi(\bar{g})$.
\end{proof}

\begin{remarks} Suppose that $\gcd (n,w) = b = 1$ in the above construction. Then
the group of group-like elements of $B$ is $\langle h \rangle,$
infinite cyclic, so $B = k \langle h,y \rangle$ is a Hopf algebra
of the class constructed by Liu \cite{Liu}. Indeed it's clear that
all of Liu's examples arise in this way, precisely as those algebras
$B(n,w,\xi)$ with $\gcd (n,w) = 1.$
\end{remarks}

Here is the lemma needed to find a solution $i$ to \eqref{yyE3.4.9}.

\begin{lemma} Let $n$ and $w$ be positive integers with $\gcd (n,w)
= b,$ and write $n = bn', w = bw'.$ There exists $i,$ $0 \leq i \leq
b-1,$ such that $\gcd (n, w' + n'i) = 1.$
\end{lemma}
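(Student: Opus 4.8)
The plan is to reduce the existence of a suitable $i$ to a finite family of congruence conditions, one for each prime divisor of $n$, and then to solve these simultaneously by the Chinese Remainder Theorem while keeping the solution inside the prescribed range $\{0,\dots,b-1\}$. First I would record the standard fact that, since $b=\gcd(n,w)$ with $n=bn'$ and $w=bw'$, one has $\gcd(n',w')=1$. The requirement $\gcd(n,w'+n'i)=1$ is equivalent to asking that $p\nmid w'+n'i$ for every prime $p$ dividing $n$, so it suffices to analyse this divisibility prime by prime, splitting the primes $p\mid n$ according to whether or not $p\mid n'$.

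For a prime $p\mid n$ with $p\mid n'$, the coprimality of $n'$ and $w'$ forces $p\nmid w'$, so $w'+n'i\equiv w'\not\equiv 0 \pmod p$ for every $i$; such primes impose no condition at all. For a prime $p\mid n$ with $p\nmid n'$, the factorization $n=bn'$ shows that every power of $p$ dividing $n$ in fact divides $b$, so $p\mid b$ and $n'$ is invertible modulo $p$; hence $w'+n'i\equiv 0\pmod p$ has the single solution $i_p\equiv -w'(n')^{-1}\pmod p$, and the only condition I must enforce for this prime is $i\not\equiv i_p\pmod p$. Thus all constraints come from the distinct primes $p_1,\dots,p_r$ dividing $n$ but not $n'$, each of which divides $b$, and each of which merely excludes one residue class modulo $p_j$.

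Finally I would assemble the solution. Since each $p_j\geq 2$ leaves at least $p_j-1\geq 1$ admissible residues modulo $p_j$, the Chinese Remainder Theorem produces a whole residue class modulo $P:=p_1\cdots p_r$ consisting entirely of admissible $i$. Because the $p_j$ are distinct primes each dividing $b$, their product $P$ divides $b$, so this residue class meets $\{0,\dots,b-1\}$ in exactly $b/P\geq 1$ integers; any one of them is the required $i$.

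The one point needing care — indeed the only place where the statement could conceivably fail — is the range restriction $0\leq i\leq b-1$. This is precisely what is secured by the observation that every constraining prime divides $b$, so that $P\mid b$ and the congruence solution can be taken within a complete residue system modulo $b$. The elimination of the primes dividing $n'$ by means of $\gcd(n',w')=1$ is exactly what keeps the set of constraining primes confined to divisors of $b$, and hence makes the range restriction automatic.
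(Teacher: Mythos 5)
Your proof is correct, and it reaches the witness by a genuinely different mechanism than the paper, even though both arguments start from the same two reductions: since $\gcd(n',w')=1$, any prime dividing both $n$ and $n'$ cannot divide $w'+n'i$ for any $i$, so such primes impose no condition; and every remaining prime divisor of $n$ must divide $b$. From that point the paper is constructive: it exhibits the single explicit candidate $i_0=\prod\{p:\ p \mid b,\ p\nmid w'\}$ (an empty product being $1$) and verifies $\gcd(n,w'+n'i_0)=1$ by the same prime-by-prime case analysis. You instead observe that each constraining prime $p_j$ forbids exactly one residue class modulo $p_j$, apply the Chinese Remainder Theorem to obtain a whole admissible class modulo $P=p_1\cdots p_r$, and then use $P\mid b$ to locate a representative in $\{0,\dots,b-1\}$. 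The trade-off: the paper's route buys an explicit formula for a witness with no appeal to CRT, while yours buys the range restriction $0\le i\le b-1$ automatically --- and this is a real advantage, because the paper is actually careless on exactly that point. For $n=w=6$ one has $b=6$, $n'=w'=1$, and the paper's recipe gives $i_0=2\cdot 3=6\notin\{0,\dots,5\}$; its conclusion survives only because admissibility of $i$ depends on $i$ modulo $P$, which divides $b$, so $i_0$ may be reduced modulo $b$ --- a step the paper omits and your argument renders unnecessary. The one sentence worth adding to your write-up is the degenerate case in which no constraining primes exist (e.g.\ $b=1$): then $P$ is the empty product $1$ and $i=0$ already works; your argument covers this, but only via the convention on empty products.
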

\begin{proof} Define $i_0 := \prod \{ p : p \textit{ prime, } p|b, p
\nmid w' \},$ with $i_0 = 1$ if there are no primes in the indicated
set. We claim that $\gcd (n, w' + n'i_0) = 1.$ Let $q$ be a prime
divisor of $n.$ If (i) $q|n'$ then $q \nmid w'$, so $q \nmid(w' + n'
i_0).$ Suppose (ii) that $q \nmid n',$ so that $q|b.$ If $q|w'$,
then $q \nmid i_0$ and so $q \nmid n' i_0.$ Therefore $q \nmid (w' +
n' i_0).$ If $q \nmid n',$ so that $q|b,$ but $q \nmid w',$ then $q
| i_0,$ and hence $q \nmid (w' + n' i_0).$ Combining (i) and (ii)
yields the result.
\end{proof}

\section{Classification I: $im(H) = 1$}
\label{yysec4}
\subsection{Primitive versus group-like}
\label{yysec4.1} Having dealt in Proposition \ref{yysec3.1} with
the case where $io (H) =1,$ henceforth we shall always assume that
$$n:=io(H)>1.$$

We begin the classification proof by dealing first with the subclass
of prime affine regular Hopf algebras $H$ of GK-dimension one for
which
\begin{eqnarray} im(H) = 1.
\label{yyE4.1.1}
\end{eqnarray}
In fact, in a small (and, as we shall see eventually, purely
formal) weakening of this assumption, it will be convenient to
assume that \begin{quote} \textit{there exists an algebra map}
$\pi: H\to k$ \textit{satisfying both Hypotheses \ref{yysec2.5}
and the condition}
\begin{eqnarray} |G^l_{\pi}/(G^l_{\pi}\cap G^r_{\pi})|= 1.
\label{yyE4.1.2}
\end{eqnarray}\end{quote}
Notice that \eqref{yyE4.1.2} implies that $G^l_{\pi}=G^r_{\pi}$ and
hence $H^l_\pi=H^r_\pi=H_\pi= \widetilde{H_\pi}$, the latter being a
subHopf algebra of $H$, by Proposition \ref{yysec2.1}(h). In
particular, cocommutative Hopf algebras satisfy \eqref{yyE4.1.1} and
\eqref{yyE4.1.2}; conversely, it follows from the theorem we prove
below that every Hopf algebra $H$ satisfying Hypotheses
\ref{yysec2.5} and \eqref{yyE4.1.2} is cocommutative.


More precisely, we shall prove

\begin{theorem}
Suppose $(H,\pi)$ satisfies Hypotheses \ref{yysec2.5} and
\eqref{yyE4.1.2}. Then $H$ is isomorphic as a Hopf algebra either to
the Taft algebra $H(n,0,\xi)$ of Example \ref{yysec3.3}, or to the
dihedral group algebra $k\mathbb{D}$ of Example \ref{yysec3.2}. As a
consequence, $H$ is cocommutative and satisfies \eqref{yyE4.1.1}.
\end{theorem}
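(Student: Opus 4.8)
The plan is to exploit \eqref{yyE4.1.2} to reduce $H$ to a strongly graded extension of a rank-one commutative Hopf algebra, and then to manufacture a grouplike generator of the grading. First I would record the consequences of $G^l_{\pi}=G^r_{\pi}$: the left and right $\widehat{G^l_{\pi}}$-gradings of Theorem \ref{yysec2.5}(b) coincide literally (the two groups are \emph{equal} as subgroups of $\Aut_{k-\mathrm{alg}}(H)$, not merely isomorphic), so writing $H=\bigoplus_{\chi}H_{\chi}$ for this single $C_n$-grading, Proposition \ref{yysec2.1}(h) identifies $H_0:=H^l_0=H^r_0=\widetilde{H_0}$ as a Hopf subalgebra, which by Theorem \ref{yysec2.5}(f) is an affine commutative Dedekind domain; since $H$ is a finite $H_0$-module it has GK-dimension one. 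The pivotal dichotomy now enters: by the classification of one-dimensional commutative affine Hopf domains recalled in \ref{yysec3.1}, $H_0\cong k[x]$ (the \emph{primitive case}, $x$ primitive) or $H_0\cong k[x^{\pm1}]$ (the \emph{group-like case}, $x$ grouplike). Because the grading is strong (Proposition \ref{yysec2.2}) and $\Pic(H_0)=0$ in either case, each $H_{\chi}$ is free of rank one over $H_0$; fixing a generator $t$ of the degree-one component, $t$ is a unit, $t^n\in H_0^{\times}$, and $H=\bigoplus_{i=0}^{n-1}H_0t^i$.

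The key coalgebra input is that $\Delta(H_{\chi})\subseteq H_{\chi}\otimes H_{\chi}$: this follows by intersecting Proposition \ref{yysec2.2}(a), which gives $\Delta(H_{\chi})\subseteq H_{\chi}\otimes H$, with its right-hand analogue $\Delta(H_{\chi})\subseteq H\otimes H_{\chi}$ (valid precisely because the two gradings coincide). Granting this, the heart of the argument---and the step I expect to be the main obstacle---is to replace $t$ by a grouplike generator. I would set $W:=\Delta(t)(t^{-1}\otimes t^{-1})$ and check, using $H_{\chi}H_{\chi'}=H_{\chi+\chi'}$, that $W$ and its inverse lie in $H_0\otimes H_0$, so $W$ is invertible there; coassociativity of $\Delta$ then forces $W$ to satisfy the $2$-cocycle identity $(\Delta\otimes\id)(W)\,(W\otimes1)=(\id\otimes\Delta)(W)\,(1\otimes W)$ in the commutative ring $H_0^{\otimes3}$. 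The units of $k[x]\otimes k[x]$ are the nonzero constants and those of $k[x^{\pm1}]\otimes k[x^{\pm1}]$ are the monomials, and in each case the cocycle identity leaves only the constant solutions; hence $W$ is a coboundary of a unit of $H_0$, and after rescaling $t$ by that unit I obtain a grouplike element $g$ generating the degree-one component, with $g^n$ a grouplike unit and therefore $g^n=1$.

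With $g$ in hand, $H=H_0\ast\langle g\rangle$ is determined by the automorphism $a\mapsto gag^{-1}$ of $H_0$ together with the coalgebra structure, and I would finish by a primality argument in each case. In the primitive case, applying $\Delta$ shows $gxg^{-1}$ is again primitive, hence $gxg^{-1}=\lambda x$ for a root of unity $\lambda$; were $\lambda$ a primitive $d$th root with $d<n$, the central element $g^d$ (of order $n/d>1$) would produce a nontrivial central idempotent, contradicting primality, so $\lambda$ is a primitive $n$th root $\xi$ and $H\cong H(n,0,\xi)$. In the group-like case $gxg^{-1}$ is grouplike, so equals $x^{\pm1}$; the exponent $+1$ would make $H$ commutative (impossible since $n>1$), so $gxg^{-1}=x^{-1}$, whence $g^2$ is central and the same idempotent obstruction forces $n=2$, giving $H\cong k\mathbb{D}$.

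Both resulting algebras are cocommutative ($x$ is primitive or grouplike and $g$ is grouplike in each), so the final assertion that $H$ is cocommutative and satisfies \eqref{yyE4.1.1} is immediate. The routine ingredients are the commutative classification of $H_0$, the freeness of the graded pieces, and the two primality endgames; the genuine content is the cocycle computation producing the grouplike generator $g$, which is exactly what converts the strongly graded crossed product into an honest skew group algebra over $H_0$.
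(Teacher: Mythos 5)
Your skeleton is the same as the paper's (strong grading, free rank-one components over $H_0$, a unit generator of the degree-one component normalized to a group-like $g$, then the primitive/group-like dichotomy), but one step genuinely fails as written: the claim that ``$g^n$ is a grouplike unit and therefore $g^n=1$.'' This is fine in the primitive case, where the only grouplike unit of $k[x]$ is $1$, but in the group-like case \emph{every} power $x^j$ is a grouplike unit of $k[x^{\pm 1}]$, so at this stage you can only conclude $g^n=x^j$ for some $j\in\mathbb{Z}$ --- which is exactly what the paper records as \eqref{late}. The gap is load-bearing: your dihedral endgame rests on the ``idempotent obstruction,'' which needs $g^2$ to be central \emph{of finite order}; if $g^n=x^j$ with $j\neq 0$, then $g$ has infinite order, $k[g^2]$ is a central polynomial subalgebra containing no nontrivial idempotents, and no contradiction with primeness appears. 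Fortunately the repair is one line, using facts you already establish: noncommutativity (forced by $n=\PIdeg(H)>1$) gives $gxg^{-1}=x^{-1}$, and conjugating the relation $g^n=x^j$ by $g$ gives $x^j=x^{-j}$, hence $j=0$ and $g^n=1$; after that, your argument that centrality of $g^2$ forces $n=2$ and $H\cong k\mathbb{D}$ is correct. (The paper sidesteps the issue: it keeps $g^n=x^j$, observes that $H$ is then a group algebra generated by the grouplikes $x$ and $g$, and invokes \cite[Proposition 8.2(b)]{LWZ}.)

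Two lesser remarks. Your cocycle computation for $W=\Delta(t)(t^{-1}\otimes t^{-1})$ is correct but heavier than necessary: the counit axiom applied to $\Delta(t)=W(t\otimes t)$ already forces the monomial unit $W$ to be trivial in both cases (this is how the paper gets $q=1$ in $\Delta(g)=(g\otimes g)q$), so coassociativity never has to be invoked. And in the primitive case, the inference ``$gxg^{-1}$ is primitive, hence equals $\lambda x$'' is valid only in characteristic zero (in characteristic $p$ the primitive elements of $k[x]$ include all $x^{p^i}$); since Hypotheses \ref{yysec2.5} allow $p>0$ with $p\nmid n$, it is safer to argue, as the paper does, that conjugation by $g$ is an algebra automorphism of $k[x]$, hence sends $x$ to $a+bx$, and then $a=\epsilon(gxg^{-1})=0$.
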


The conclusion that cocommutativity follows from \eqref{yyE4.1.2} is
\emph{not} valid when the GK-dimension is bigger than one - for
example $\mathcal{U}:=\mathcal{U}_{\epsilon}(\mathfrak{sl}_2)$ is an
affine regular domain and a Hopf algebra, which is PI with
GK-dimension 3. In this case, $io(\mathcal{U}) = im(\mathcal{U}) =
1,$ but $\mathcal{U}$ is not cocommutative. But we don't seem to
know what happens in GK-dimension 2.

The dichotomy appearing in the following definition will feature
heavily throughout the rest of the paper.

\begin{definition}
Let $H$ be a prime affine regular Hopf algebra of GK-dimension one.
Assume Hypotheses \ref{yysec2.5} and \eqref{yyE4.1.2}. By
Proposition \ref{yysec2.1}(h), together with Theorem
\ref{yysec2.5}(f) and Examples \ref{yysec3.1}, $H_\pi$ is isomorphic
as a Hopf algebra either to $k[x]$ or to $k[x^{\pm 1}]$.
\begin{enumerate}
\item
$H$ is called {\it primitive} if $H_\pi=k[x]$.
\item
$H$ is called {\it group-like} if $H_\pi=k[x^{\pm 1}]$.
\end{enumerate}
\end{definition}

We shall see from our final result that this definition is
independent of the choices of $\pi$ under Hypotheses \ref{yysec2.5}.

\subsection{Primitive case}
\label{yysec4.2}

\begin{proposition}
Suppose $(H,\pi)$ satisfies Hypotheses \ref{yysec2.5} and
\eqref{yyE4.1.2}. If $H$ is primitive, then $H$ is isomorphic as a
Hopf algebra to the Taft algebra $H(n,0,\xi)$ of Example \ref{yysec3.3}.
\end{proposition}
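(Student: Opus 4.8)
The plan is to realize $H$ as a skew group algebra $k[x]\rtimes\langle g\rangle$ by producing, in reverse from the analysis of Example~\ref{yysec3.3}, a group-like element $g$ of order $n$ with $xg=\xi gx$. First I would record the structure already available. By Theorem~\ref{yysec2.5}(b),(f), the primitivity hypothesis gives $H_0:=H^l_\pi=k[x]$ with $x$ primitive, and $H=\bigoplus_{j=0}^{n-1}H_j$ is strongly graded by $\widehat{G^l_\pi}\cong C_n$, where $H_j:=H^l_{\pi,j\chi}$ for a fixed generator $\chi$. Since $k[x]$ is a principal ideal domain and each $H_j$ is an invertible $H_0$-bimodule (strong grading, Proposition~\ref{yysec2.2}(b)), each $H_j$ is free of rank one over $k[x]$ on each side. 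Because $im(H)=1$ we have $G^l_\pi=G^r_\pi$, hence $H^l_{\pi,j\chi}=H^r_{\pi,j\chi}$; combining Proposition~\ref{yysec2.2}(a) with its right-hand analogue and the tensor-intersection remark from the proof of Proposition~\ref{yysec2.1}(f) yields the refined relation $\Delta(H_j)\subseteq H_j\otimes H_j$, so each $H_j$ is a subcoalgebra. Finally, Proposition~\ref{yysec2.3} gives $J_{iq}=(\ker\epsilon\cap H_0)H=xH$, and Theorem~\ref{yysec2.3}(d) identifies $\bar H:=H/xH\cong(kC_n)^\circ\cong kC_n$ (the last step since $k$ is algebraically closed with $\ch k\nmid n$); let $\bar g$ generate its group-likes.

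Next I would construct $g$. Choose a free generator $u$ of $H_1$ lifting $\bar g$; since $\epsilon(u)=\epsilon(\bar g)=1\ne0$ this normalization is available, and because group-likes are homogeneous units the \emph{only} possible group-like in $H_1$ is $u$ itself. Using $\Delta(H_1)\subseteq H_1\otimes H_1$ together with the counit axioms one finds $\Delta(u)=F\cdot(u\otimes u)$ for a unique $F=1\otimes1+\sum_{i,j\ge1}c_{ij}\,x^i\otimes x^j\in k[x]\otimes k[x]$ normalized by $(\epsilon\otimes\id)F=(\id\otimes\epsilon)F=1$; coassociativity of $\Delta$ on $u$, after cancelling the invertible factor $u\otimes u\otimes u$, becomes the normalized $2$-cocycle identity $(\Delta\otimes\id)(F)\,(F\otimes1)=(\id\otimes\Delta)(F)\,(1\otimes F)$ in $k[x]^{\otimes3}$. \emph{The crux of the proposition is to show $F=1\otimes1$, i.e. that $u$ is group-like.} I would prove this by induction on total $x$-degree; the content is that $\mathcal{O}(\mathbb{G}_a)=k[x]$ admits no nontrivial such cocycle in characteristic zero (equivalently $\Ext(\mathbb{G}_a,\mathbb{G}_m)=0$). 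This is precisely where $\ch k=0$ (or $p\nmid n$) is indispensable, as the failure recorded in Remarks~\ref{yysec2.3}B(a) illustrates. This cocycle-vanishing is the step I expect to be the main obstacle; everything else is bookkeeping. Granting it, set $g:=u$.

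To finish, $g^n\in H_0=k[x]$ is then a group-like element of $\mathcal{O}(\mathbb{G}_a)$, hence $g^n=1$ and $\langle g\rangle\cong C_n$. From $\Delta(gxg^{-1})=gxg^{-1}\otimes1+1\otimes gxg^{-1}$ the element $gxg^{-1}$ is primitive; a degree argument shows the primitives of $H$ lie in $H_0$, where they equal $kx$, so $gxg^{-1}=\lambda x$ with $\lambda\in k^\times$, $\lambda^n=1$. If $\lambda$ were not primitive, some $g^d$ with $d<n$ would be a nontrivial central group-like, yielding a nontrivial central idempotent in $H$ and contradicting primeness; so $\lambda$ is a primitive $n$th root, and with $\xi:=\lambda^{-1}$ we get $xg=\xi gx$. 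As $g$ is a homogeneous unit, $H_j=k[x]g^j$ and $H=\bigoplus_{j=0}^{n-1}k[x]g^j$. Thus $H$ is generated by $x,g$ subject to $g^n=1$, $xg=\xi gx$, with $x$ primitive and $g$ group-like, so the assignment $x\mapsto x$, $g\mapsto g$ defines a Hopf algebra homomorphism $H(n,0,\xi)\to H$ that is bijective on the bases $\{x^ig^j\}$, giving $H\cong H(n,0,\xi)$ as required.
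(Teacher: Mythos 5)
Your skeleton matches the paper's proof in Proposition \ref{yysec4.2} (strong grading over $H_0=k[x]$, a generator of $H_1$, group-likeness of that generator, the commutation relation, primeness forcing a primitive root of unity), but there is a genuine gap at exactly the point you yourself flag as the crux: you never prove $F=1\otimes 1$; you name it a cocycle-vanishing statement, propose an induction, and then proceed ``granting it''. Moreover, your reduction to the cocycle identity already presupposes part of what is missing: to ``cancel the invertible factor $u\otimes u\otimes u$'' you need $u$ to be invertible (or at least regular) in $H$, and this is nowhere established in your proposal --- the aside that ``group-likes are homogeneous units'' concerns group-like elements, which is precisely what you are trying to show $u$ is. (A smaller imprecision: $G^l_\pi=G^r_\pi$ identifies the two eigenspace decompositions only up to a permutation of the characters, $H^l_i=H^r_{i'}$, not with equal labels; your conclusion $\Delta(H_1)\subseteq H_1\otimes H_1$ survives this, as in the paper.)

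The single missing idea, which is how the paper closes both holes at once, is that the strong grading forces $u$ to be a \emph{unit} of $H$: one first checks that the one-sided generators of $H_1$ can be taken equal, $H_1=uk[x]=k[x]u$, and then $H_1H_{n-1}=H_0$ gives $uu'$ equal to a unit of $k[x]$, i.e.\ a nonzero scalar, for a suitable generator $u'$ of $H_{n-1}$; hence $u$ is invertible in $H$. Once this is known, $\Delta(u)=F\,(u\otimes u)$ with $F$ invertible in $k[x]\otimes k[x]$ (its inverse is again an element of $k[x]\otimes k[x]$, coming from $\Delta(u^{-1})\in H_{n-1}\otimes H_{n-1}$), and the units of the domain $k[x]\otimes k[x]$ are just $k^{\times}$; the counit normalization then gives $F=1\otimes 1$ at once. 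No cohomological input, no induction on degree, and no appeal to $\Ext(\mathbb{G}_a,\mathbb{G}_m)=0$ is needed. Your cocycle claim is in fact true and could be proved directly --- for instance, the cocycle identity forces a polynomial $F$ to be nowhere vanishing on $\mathbb{A}^2$, hence a nonzero constant, hence $1$ --- but some such argument must actually be supplied; as written, the central step of your proof is absent. Your endgame ($g^n=1$, $gxg^{-1}=\lambda x$ via the grading, primeness forcing $\lambda$ primitive, and the comparison of free $k[x]$-bases to get $H\cong H(n,0,\xi)$) is sound and agrees with the paper.
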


\begin{proof} Since $\widehat{G^l_{\pi}}={\mathbb Z}_n=\{0,1,
\cdots,n-1\}$, we write $H^l_{\pi,i}=H^l_i$ for all $i\in
\widehat{G^l_{\pi}}$. Similarly define $H^r_i$. By \eqref{yyE4.1.2},
we have
$$H^l_0=H^l_\pi=H^r_{\pi}=H^r_0$$
which is also equal to $H_0$ (and $\widetilde{H_0}$).

Since $H$ is primitive, $H_0 = k[x]$ and it is a Hopf subalgebra
by Proposition \ref{yysec2.1}(h), after a linear change of variable
we may assume
$$\Delta(x)=x\otimes 1 + 1\otimes x , \quad S(x) = -x
\textit{ and } \epsilon (x) = 0. $$ By Proposition \ref{yysec2.2},
$H=\oplus_{i\in {\mathbb Z}_n} H^l_i,$ with each $H^l_i$ an
invertible module over $H^l_0=k[x]$. Hence, for each $i$,
$H^l_i=u_ik[x]=k[x]v_i$ for some $u_i,v_i\in H^l_i$. Fix $\alpha,
\delta \in k[x]$ so that $u_i = \alpha v_i$ and $v_i = u_i \delta.$
Then $k[x]u_i \subseteq u_ik[x],$ so that $u_i = \alpha u_i \delta =
u_i \beta \delta$ for some $\beta \in k[x].$ Thus $\delta$ is a unit
of $k[x]$ and so $H^l_i = u_i k[x] = k[x]u_i.$ Now the strong
grading condition implies that $u_iu_{-i}$ is a unit element in
$H^l_0=k[x]$. Hence each $u_i$ is a unit element. Set $g=u_1$. Then
we may take $u_i:=g^i$ for all $i,$ $0\leq i<n,$ so that $H$ is
generated as a $k-$algebra by $x$ and $g.$ Now $g^n$ equals a
non-zero scalar in $H^l_0=k[x]$, so, since $k$ contains primitive
$n$th roots thanks to Hypotheses \ref{yysec2.5}(d), we can adjust
our choice of $g$ if necessary so that $g^n=1.$ This in turn forces
$\epsilon (g)$ to be an $n$th root of 1 in $k,$ so, possibly after
another adjustment, we can assume that
\begin{eqnarray}
\epsilon (g) = 1.
\label{yyE4.2.1}
\end{eqnarray}

Fix $i$, $1 \leq i \leq n-1$. Since, by \eqref{yyE4.1.2}, $G^l_\pi
=G^r_\pi$, $H^l_i=H^r_j$ for some $j$, $1 \leq j \leq n-1$.
That is, there is a permutation $i\mapsto i'$ of
$\{1, \ldots , n-1 \}$ such that $H^l_i=H^r_{i'}$.

In particular, by Proposition and Remark \ref{yysec2.2}(a),
$$\Delta(g)\in (H^l_1\otimes H)\cap (H \otimes H^r_{1'}) =
H^l_1\otimes H^r_{1'} = H^l_1\otimes H^l_1.$$ So we can write
$$\Delta(g)=(g\otimes g)q$$
for some $q\in H_0\otimes H_0 = k[x] \otimes k[x]$. Since $g$ is
invertible, so is $q$, and so $q \in k \setminus \{ 0 \}.$  Now
(\ref{yyE4.2.1}) and the fact that $m_A\circ (id \otimes
\epsilon)\circ \Delta = id$ force $q=1$. Therefore $g$ is
group-like, and so $S(g)=g^{-1}.$

Now $H^l_1 = k[x]g = gk[x],$ so conjugation by $g$ yields an
algebra automorphism of $k[x].$ Thus there are $a,b \in k$ with
$b \neq 0$ such that
$$xg=g(a+bx).$$
Applying $\epsilon$ to this shows that $a=0;$ and, since $g^n=1$,
we have $b^n=1$. Therefore $H$ is a factor of the algebra
$C:=k\langle x,g : xg=bgx,g^n=1 \rangle ,$ and both algebras are
free of rank $n$ over $k[x]$. Thus $H = C,$ and primeness of $H$
entails that $b$ is a primitive $n$th root of 1. This completes
the proof.
\end{proof}

\subsection{Group-like case}
\label{yysec4.3}

\begin{proposition}
Suppose $(H,\pi)$ satisfies Hypotheses \ref{yysec2.5} and
\eqref{yyE4.1.2}. If $H$ is group-like, then $H$ is isomorphic as a
Hopf algebra to the dihedral group algebra $k{\mathbb D}$ of Example
\ref{yysec3.2}.
\end{proposition}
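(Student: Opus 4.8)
The plan is to run the argument in close parallel with the primitive case (Proposition \ref{yysec4.2}), diverging only after the degree-one generator has been shown to be group-like. Since $H$ is group-like, $H_0 = H_\pi = k[x^{\pm 1}]$ is a Hopf subalgebra with $x$ group-like, $\epsilon(x)=1$ and $S(x)=x^{-1}$ (Examples \ref{yysec3.1}). As before, the strong $\widehat{G^l_\pi}$-grading of Proposition \ref{yysec2.2} gives $H=\oplus_{i\in\mathbb{Z}_n}H^l_i$ with each $H^l_i$ an invertible module over the Dedekind domain $H^l_0=k[x^{\pm 1}]$. Here I would use that $k[x^{\pm 1}]$ is a principal ideal domain, so its Picard group is trivial and each $H^l_i=u_ik[x^{\pm 1}]=k[x^{\pm 1}]u_i$ is free of rank one; the strong-grading relation that $u_iu_{-i}$ is a unit of $H^l_0$ then forces every $u_i$ to be a unit of $H$. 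Putting $g:=u_1$ I may take $u_i=g^i$, so $H$ is generated as a $k$-algebra by $x^{\pm 1}$ and $g$. As $g$ is a unit, $\epsilon(g)\neq 0$; replacing $g$ by a suitable scalar multiple of $gx^m$ (which leaves $H^l_1=gk[x^{\pm 1}]$ unchanged) I normalise $\epsilon(g)=1$.

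Next I would establish that $g$ is group-like, exactly as in the primitive case. Hypothesis \eqref{yyE4.1.2} gives $G^l_\pi=G^r_\pi$, hence a permutation $i\mapsto i'$ of $\{1,\dots,n-1\}$ with $H^l_i=H^r_{i'}$; combining the one-sided coideal inclusions of Propositions \ref{yysec2.1}(c) and \ref{yysec2.2}(a) yields $\Delta(g)\in(H^l_1\otimes H)\cap(H\otimes H^r_{1'})=H^l_1\otimes H^l_1$. Writing $\Delta(g)=(g\otimes g)q$ with $q\in k[x^{\pm 1}]\otimes k[x^{\pm 1}]$ invertible, the unit group of $k[x^{\pm 1}]\otimes k[x^{\pm 1}]$ consists of the elements $c\,x^a\otimes x^b$, and applying the counit axioms together with $\epsilon(g)=\epsilon(x)=1$ forces $c=1$ and $a=b=0$. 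Thus $\Delta(g)=g\otimes g$, so $g$ is group-like and $\langle x,g\rangle$ is a group of group-like elements of $H$.

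It then remains to identify $\langle x,g\rangle$ with $\mathbb{D}$. Since $H^l_1=gk[x^{\pm 1}]=k[x^{\pm 1}]g$, conjugation by $g$ is an algebra automorphism of $k[x^{\pm 1}]$, so $gxg^{-1}=\lambda x^{\pm 1}$ for some $\lambda\in k^\times$; but $gxg^{-1}$ is group-like, and $\lambda$ times a group-like is group-like only when $\lambda=1$ (compare coproducts), so $\lambda=1$. If $gxg^{-1}=x$ then $x$ and $g$ commute, $H=k\langle x^{\pm 1},g\rangle$ is commutative, whence $io(H)=1$, contradicting the standing assumption $n>1$. Therefore $gxg^{-1}=x^{-1}$. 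Moreover $g^n$ is a group-like element of $k[x^{\pm 1}]$, so $g^n=x^m$ for some $m$; conjugating, $x^m=g\,g^n g^{-1}=g\,x^m g^{-1}=x^{-m}$, and since $x$ has infinite order this gives $m=0$, i.e.\ $g^n=1$.

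The crux — the only genuinely new step relative to the primitive case — is that primeness now pins down $n=2$. From $g^n=1$ and $gxg^{-1}=x^{-1}$ we get $x=g^n x g^{-n}=x^{(-1)^n}$, so $n$ must be even (otherwise $x=x^{-1}$, impossible). For even $n>2$ the element $g^2\neq 1$ commutes with both $x$ and $g$, generating a nontrivial finite central, hence normal, subgroup of $\langle x,g\rangle$; by the standard primeness criterion for group algebras this contradicts the primeness of $H\cong k\langle x,g\rangle$. Hence $n=2$, $g^2=1$ and $gxg^{-1}=x^{-1}$, so $\langle x,g\rangle\cong\mathbb{D}$ and $H\cong k\mathbb{D}$. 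I expect this final primeness/group-theoretic step to be the main obstacle; everything before it is a routine transcription of the primitive-case computation to the Laurent setting, the only real adjustments being the larger unit group of $k[x^{\pm 1}]$ and the fact that $x$ is group-like rather than primitive.
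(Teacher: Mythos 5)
Your proof is correct and takes essentially the same route as the paper's: the paper likewise transports the first three paragraphs of the primitive case \emph{mutatis mutandis}, deduces $\Delta(g)=(g\otimes g)q$ with $q=1$ (so $g$ is group-like), and concludes that $H$ is a group algebra on the group-likes $x$ and $g$, which primeness then forces to be $k\mathbb{D}$. The only difference is that the paper delegates the final group-theoretic step to \cite[Proposition 8.2(b)]{LWZ}, whereas you execute it explicitly (via $g^n=1$, $gxg^{-1}=x^{-1}$, and Connell's criterion that a prime group algebra admits no nontrivial finite normal subgroup), which merely fills in the cited detail rather than changing the method.
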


\begin{proof} Since $H$ is group-like, $H_0=k[x^{\pm 1}]$.
The first three paragraphs of the proof of Proposition
\ref{yysec4.2} apply \emph{mutatis mutandis} to the group-like
case. We find that, after adjustments, there is an invertible
element $g \in H^l_1$ with $\varepsilon (g) = 1$ such that $H^l_i
= g^ik[x^{\pm 1}]$ for $i = 0, \dots , n-1.$ Moreover, there
exists $j \in \mathbb{Z}$ with \begin{eqnarray} g^n \, = \, x^j.
\label{late} \end{eqnarray} And there is a permutation $i\mapsto
i'$ of $\{1, \ldots , n-1\}$ such that $H^l_i=H^r_{i'}$. As in the
previous subsection,
$$\Delta(g)=(g\otimes g)q$$
for an invertible element $q$ of $H_0 \otimes H_0.$ Applying the
Hopf axioms as before, we deduce that $q = 1$, so that $g$ is a
group-like element. Therefore $H$ is generated as a $k-$algebra by
the two group-like elements $g$ and $x$, so $H$ is a group
algebra. Now it is easy to deduce that, if $n \neq 0$ then $H
\cong k\mathbb{D}$ is the only possibility, as in
\cite[Proposition 8.2(b)]{LWZ}.
\end{proof}

\section{The center}
\label{yysec5}
This section is preparatory for $\S 6,$ where we
deal with the case  $im(H)=io(H)$ of the classification.

\subsection{Abelian strong gradings}
\label{yysec5.1} We begin with some further analysis of an algebra
strongly graded by a finite abelian group, with commutative
identity component, a set-up we have been led to in \S
\ref{yysec2.5}. Recall that, if $Z$ is an Ore domain, then the
\emph{rank} of a $Z$-module $M$ is defined to be the
$Q(Z)-$dimension of $Q(Z)\otimes_Z M$; the rank of a right
$Z$-module is defined analogously. The PI-degree of an algebra $R$
has been recalled in (\ref{yysec2.3}).

\begin{proposition}
Let $D$ be a $k$-algebra and $G$ a finite abelian group acting
faithfully on $D$. Suppose that the order $n$ of $G$ is a unit
in $k$, so that $D$ is $\widehat{G}-$graded,
\begin{eqnarray}
D=\oplus_{\chi\in \widehat{G}} D_{\chi}.
\label{yyE5.1.1}
\end{eqnarray}
Assume that
\begin{enumerate}
\item[(i)]
\eqref{yyE5.1.1} is a strong grading, and
\item[(ii)]
$D_0$ is a commutative domain, with field of fractions $Q_0$.
\end{enumerate}
Then the following hold.
\begin{enumerate}
\item
$D$ is a semiprime Goldie PI-algebra, with
$\PIdeg(D) \leq n.$ Every nonzero homogeneous
element is a regular element of $D$.
\item
There is an action of $\widehat{G}$ on $Q_0$. Denote the
automorphism of $Q_0$ corresponding to $\chi \in
\widehat{G}$ by $\kappa_{\chi}.$ Then, for every
non-zero element $x$ of $D_{\chi}$,
\begin{eqnarray}
xa=\kappa_{\chi}(a) x
\label{yyE5.1.2}
\end{eqnarray}
for all $a\in Q_0$. For all $\chi \in \widehat{G},$
$\kappa_{\chi}(D_0) \subseteq D_0;$ that is, $\kappa_{\chi}$
restricts to an automorphism of $D_0$.
\item
Let $K:=\{\kappa_{\chi}\;|\; \chi\in \widehat{G}\} \subseteq
\mathrm{Aut}(D_0)$. If $L$ is a cyclic subgroup of the kernel of
the homomorphism $\kappa : \widehat{G}\to K : \chi \mapsto
\kappa_{\chi}$, then
$$ \PIdeg(D) \leq |\widehat{G}/L|.$$
\item
Consider the following statements:
\begin{enumerate}
\item[(1)] $\PIdeg(D) = n$. \item[(2)] $\widehat{G}$
acts faithfully on $Q_0.$ \item[(3)] $\widehat{G}$ acts faithfully
on $Q_0$ and the center $Z(D) \subseteq D_0.$ \item[(4)] $D$ is
prime.
\end{enumerate}
Then (1), (2) and (3) are equivalent, and they imply (4).
(4) does not imply the others. When (1,2,3) hold,
\begin{eqnarray}
Z(D) = \{a \in D_0 | \kappa_{\chi}(a) = a \; \forall \chi \in
\widehat{G} \}.
\label{yyE5.1.3}
\end{eqnarray}
\end{enumerate}
\end{proposition}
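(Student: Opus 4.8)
The plan is to localize $D$ at the Ore set $S := D_0 \setminus \{0\}$ and to recognize the resulting ring $\tilde D := S^{-1}D$ as a crossed product of the field $Q_0$ by $\widehat G$, after which every assertion reduces to a statement about that crossed product. First I would check that $S$ consists of regular elements and is a (two-sided) Ore set: strong grading makes each $D_\chi$ an invertible, hence rank-one, $D_0$-module, so $D$ is a finitely generated $D_0$-module, the localization is $\tilde D = \oplus_\chi S^{-1}D_\chi$ with each $S^{-1}D_\chi$ one-dimensional over $Q_0$, and $S^{-1}D_\chi \cdot S^{-1}D_{-\chi} = Q_0$ forces each homogeneous component to contain a unit $u_\chi$. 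Thus $\tilde D = \oplus_\chi Q_0 u_\chi$ is a crossed product $Q_0 \ast \widehat G$ of $Q_0$-dimension $n$, and since $n$ is a unit in $k$, Maschke's theorem for crossed products makes $\tilde D$ semisimple Artinian.

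For (a), semiprimeness of $D$ follows because a nilpotent ideal localizes to a nilpotent ideal of the semisimple ring $\tilde D$ and $S$ is regular, so it must vanish; being finite-dimensional over the field $Q_0$ makes $\tilde D$ Goldie and PI, and the regular representation embeds $\tilde D \hookrightarrow \End_{Q_0}(\tilde D) \cong M_n(Q_0)$, whence $\PIdeg(D)=\PIdeg(\tilde D)\le n$. Every nonzero homogeneous element is a nonzero $Q_0$-multiple of some $u_\chi$, hence a unit in $\tilde D$ and so regular in $D$. For (b), conjugation by $u_\chi$ preserves $(\tilde D)_0 = Q_0$ and defines $\kappa_\chi \in \Aut(Q_0)$; commutativity of $Q_0$ kills the cocycle, so $\chi \mapsto \kappa_\chi$ is a homomorphism $\widehat G \to \Aut(Q_0)$. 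Writing any nonzero $x \in D_\chi$ as $qu_\chi$ gives $xa = \kappa_\chi(a)x$, and $\kappa_\chi(D_0)\subseteq D_0$ follows by choosing $1 = \sum_i x_i y_i$ with $x_i \in D_\chi,\ y_i \in D_{-\chi}$ and computing $\kappa_\chi(a) = \sum_i (x_i a) y_i \in D_\chi D_{-\chi} = D_0$.

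For (c), if $L \subseteq \ker\kappa$ is cyclic with generator $\chi_0$, then $u_{\chi_0}$ commutes with $Q_0$, so $C := \oplus_{\lambda \in L} Q_0 u_\lambda = Q_0[u_{\chi_0}]$ is a \emph{commutative} subalgebra of $\tilde D$; here cyclicity of $L$ is precisely what guarantees commutativity. Since $\widehat G$ is abelian, conjugation by each $u_\psi$ stabilizes $C$, so $\tilde D = \oplus_{\psi} C u_\psi$ is a free $C$-module of rank $|\widehat G / L|$ on coset representatives, and left multiplication embeds $\tilde D \hookrightarrow \End_C(\tilde D) \cong M_{|\widehat G / L|}(C)$, giving $\PIdeg(D) \le |\widehat G / L|$. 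For (d), the implication (1)$\Rightarrow$(2) is the contrapositive of (c): if $\kappa$ is not faithful, a cyclic $L$ of prime order inside $\ker\kappa$ forces $\PIdeg(D) \le n/|L| < n$. Conversely (2)$\Rightarrow$(1): a faithful action makes $Q_0/Q_0^{\widehat G}$ Galois of degree $n$ by Artin's theorem and identifies $\tilde D$ with the classical crossed-product algebra, which is central simple over $F := Q_0^{\widehat G}$ of dimension $n^2$; hence $\PIdeg(D)=n$, and simplicity of $\tilde D$ transfers across the Ore localization to give primeness of $D$, proving (4). For (2)$\Leftrightarrow$(3) and the formula, I would first note $Z(D)$ is graded (compare degrees in $zd=dz$ for homogeneous $d$); a nonzero homogeneous central $z_\chi$ satisfies both $z_\chi a = a z_\chi$ and $z_\chi a = \kappa_\chi(a) z_\chi$, and regularity of $z_\chi$ forces $\kappa_\chi = \mathrm{id}$, so under (2) we get $\chi = 0$, whence $Z(D)\subseteq D_0$ and $Z(D) = \{a \in D_0 : \kappa_\chi(a)=a\ \forall \chi\}$. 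Finally, a standard quaternion division algebra over a field $Q_0$, graded by $\widehat G = C_2 \times C_2$, is prime yet has trivial $\kappa$ and $\PIdeg = 2 < 4$, witnessing that (4) does not imply (1)--(3).

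The main obstacle I anticipate is the equivalence (1)$\Leftrightarrow$(2): making rigorous the identification of the faithful-action localization with a classical (central simple) crossed-product algebra, and cleanly transferring both the PI-degree and primeness between $D$ and $\tilde D$ through the Ore localization. The rest is bookkeeping with the crossed-product description secured in the first paragraph.
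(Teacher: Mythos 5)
Your overall strategy is recognizably the paper's: everything is reduced to the ring $\oplus_{\chi}Q_0u_{\chi}$, a crossed product of $\widehat{G}$ over the field $Q_0$, and your arguments for (b), (c), for (1)$\Rightarrow$(2), and for the graded-center step in (d) coincide with the paper's. But there is a genuine gap at the very first step, on which your entire construction rests: you never prove that $S=D_0\setminus\{0\}$ is an Ore set in $D$. The justification you offer --- each $D_\chi$ is invertible of rank one, hence $D$ is a finitely generated $D_0$-module --- does not do this job, because $D_0$ is \emph{not} central in $D$ (the nontriviality of the $\kappa_\chi$ is the whole point of the proposition), and finite generation over a non-central commutative subring does not by itself yield the Ore condition. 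Moreover your write-up is circular at this point: you deduce semiprimeness of $D$ \emph{from} the localization (``a nilpotent ideal localizes to a nilpotent ideal of the semisimple ring $\tilde D$''), and you define $\kappa_\chi$ by conjugation by $u_\chi$ \emph{inside} $\tilde D$, so neither Goldie theory nor the commutation relation $xa=\kappa_\chi(a)x$ is available to you when you need Re-ness --- both are downstream of the localization in your scheme. The paper orders the steps the other way around: it first proves $D$ is semiprime (the nilradical is $G$-stable, hence graded, hence equals $(N\cap D_0)D=0$ by strong grading) and right Goldie ($D$ is a torsion-free right $D_0$-module, so embeds in the finite-dimensional $Q_0$-space $DQ(D_0)$), then invokes Goldie's theorem to obtain the semisimple artinian quotient ring $Q$, and only afterwards identifies $Q=Q_0D=DQ_0=\oplus_\chi Q_0u_\chi$, concluding \emph{a posteriori} that $S$ is Ore with localization $Q$.

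The gap is fillable without abandoning your route. Since $D_0$ is a commutative domain and $D_\chi$ is an invertible $D_0$-bimodule, right multiplication by $a\in D_0$ is an endomorphism of the invertible \emph{left} $D_0$-module $D_\chi$, hence equals left multiplication by a unique element $\kappa_\chi(a)\in D_0$; this establishes $xa=\kappa_\chi(a)x$ for all $x\in D_\chi$ already inside $D$, one checks $\kappa_\chi$ is an automorphism, and the two-sided Ore condition for $S$ then follows by using $\prod_{\chi}\kappa_\chi^{\pm1}(s)$ as a common denominator. (Alternatively, simply adopt the paper's Goldie-theoretic opening.) Granting the localization, the rest of your proposal is sound, and in two places genuinely different from the paper: for (2)$\Rightarrow$(1) you invoke Artin's theorem and the classical fact that a crossed product of the Galois extension $Q_0/F$ is central simple of dimension $n^2$ over $F$, where the paper instead makes an elementary rank count ($\rank_{Z(Q)}Q_0\geq n$ and $\rank_{Q_0}Q=n$, so $\rank_{Z(Q)}Q\geq n^2$); both are valid, and both quietly use the standard fact that $D$ and its quotient ring have the same PI-degree. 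Your counterexample for (4)$\not\Rightarrow$(1)--(3) is also fine in spirit, but since $k$ is algebraically closed in this paper you should take the split quaternions $M_2(k)$ with its Pauli $C_2\times C_2$-grading rather than a division algebra; the paper uses the commutative example $\mathbb{C}[x^{\pm1}]$ with the sign action instead.
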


\begin{proof}
Thanks to hypotheses (i) and (ii) $D$ is a torsion free module of
finite rank over a commutative ring, and so $D$ is a PI-algebra.
Since the nilpotent radical $N$ of $D$ is $G-$invariant, it is
$\widehat{G}-$graded. Therefore, by strong grading (i), $N = (N
\cap D_0)D$, so (ii) forces $N=0$. We claim that
\begin{eqnarray} D \textit{ is a right Goldie ring. }
\label{yyExtra}\end{eqnarray} For, being strongly graded, $D$ is a
projective (hence torsion-free) right $D_0-$module. Thus, as right
$D_0-$module, $D$ embeds in the finite dimensional $Q(D_0)-$vector
space $DQ(D_0),$ and (\ref{yyExtra}) follows easily. Therefore
Goldie's theorem \cite[Theorem 2.3.6]{MR} ensures that $D$ has a
semisimple artinian quotient ring $Q$. The $G-$action on $D$ extends
to a $G-$action on $Q$, so that $Q$ is a $\widehat{G}$-graded
algebra containing $D$ as a graded subalgebra. Since $D$ is strongly
$\widehat{G}$-graded, so is $Q$, $Q = \oplus_{\chi \in
\widehat{G}}Q_{\chi}$. In particular, each non-zero element of
$Q_{\chi}$ is a unit, and no nonzero element of $D_{\chi}$ is a zero
divisor.

For each $\chi \in \widehat{G},$ $D_{\chi}$ is an invertible
$D_0$-module, so $Q_0D_{\chi}=Q_0u_{\chi}$ for any nonzero element
$u_{\chi}\in D_{\chi}$. Thus $Q_0D=\oplus_{\chi \in \widehat{G}}
Q_0u_{\chi}$ is a finite dimensional left $Q_0$-vector space.
Since right multiplication of this space by any non-zerodivisor of
$D$ induces an invertible map, $Q_0D=Q$. Similarly, $DQ_0=Q$.
Therefore
$$Q=\oplus_{\chi\in \widehat{G}} Q_0u_{\chi}=
\oplus_{\chi\in \widehat{G}} u_{\chi} Q_0$$ as graded spaces. This
also shows that $D_0-\{0\}$ is an Ore set of $D$ and the associated
localization of $D$ is $Q$. Thus we may assume whenever convenient
in the rest of the proof that $D_0=Q_0$ is a field and $D=Q$ is
semisimple artinian.

(a) Since $D$ is a subring of $\End(Q_{Q_0})=\End (Q_0^{\oplus
n})\cong M_n(Q_0)$ where $n=|\widehat{G}|$, $\PIdeg(D)
\leq n.$

(b) For $a \in Q_0,$ define
$\kappa_{\chi}(a)=u_{\chi}au_{\chi}^{-1}$. Then $\kappa_{\chi}$ is
an automorphism of $Q_0$ independent of the choice of a nonzero
element $u_{\chi} \in Q_{\chi}=Q_0u_{\chi}$. The map $\kappa :
\chi\to \kappa_{\chi}$ induces a surjective group homomorphism from
$\widehat{G}$ to $K=\{\kappa_{\chi}\;|\; \chi\in \widehat{G}\}$.

Let $a \in D_0$ and $\chi \in \widehat{G}.$ From (\ref{yyE5.1.2})
with $x$ in $D_{\chi}$ we see that $$\kappa_{\chi}(a)D_{\chi}
\subseteq D_{\chi}.$$ Multiplying this inclusion on the right by
$D_{-\chi}$ and appealing to strong grading, we deduce that
$\kappa_{\chi} (a)D_0 \subseteq D_0.$ Thus $\kappa_{\chi}(a) \in
D_0$ as claimed.

(c) Let $L$ be any cyclic subgroup of the kernel of the map from
$\widehat{G}$ to $K$ and let $\chi_L$ be the generator of $L$.
Let $t$ be  the order of $P:=\widehat{G}/L$. For each coset $w$
in $P$, set
$$C_w=\oplus_{ \chi \in w} Q_{\chi}.$$
Then $D=\oplus_{w \in P}C_w,$ and $C_0$ is the commutative
subalgebra of $D$ generated by $Q_0$ and $u_{\chi_L}$.
(Note that $u_{\chi_{L}}$ commutes with $Q_0$ because $\chi_L$
is in the kernel of $\widehat{G}\to K$.) For each $w \in P$
choose $\chi \in w$ and $0 \neq u_{\chi} \in Q_{\chi}$.
Define $u_w := u_{\chi},$ so that
$$D=\oplus_{w\in P}C_w=
\oplus_{w \in P}u_wC_0= \oplus_{i\in P}C_0u_w.$$
Therefore $D$ is strongly $P$-graded and it is a subring of
$\End(D_{C_0})\cong M_t(C_0)$. Hence, $\PIdeg(D) \leq t$.

(d) Suppose (1), that $\PIdeg(D)=n$. Then (2) follows
from (c). Suppose (2), so that the map from $\widehat{G}$ to $K$
is an isomorphism. The center of $D$ is $\widehat{G}$-graded, and
if $0 \neq x\in D_{\chi} \cap Z(D) $ for some $0 \neq \chi \in
\widehat{G}$ then (\ref{yyE5.1.2}) is contradicted. Therefore $Z(D)
\subseteq Q_0,$ and (\ref{yyE5.1.3}) is clear. Thus (2) implies
(3).

Suppose now that (3) holds. Since $K$ acts on $Q_0$ faithfully,
$$Q_0=\oplus_{\eta\in \widehat{K}} (Q_0)_{\eta},$$
where, by Lemma \ref{yysec2.2}, each $(Q_0)_{\eta}\neq
0$. By hypothesis (3) and (\ref{yyE5.1.3}), $Z(Q)=(Q_0)_0$. Thus
$\rank_{Z(Q)}Q_0\geq |K|=n$. Since $Q$ is strongly
$\widehat{G}$-graded, $\rank_{Q_0}Q=n$. Combining these we have
$n^2 \leq \rank_{Z(Q)}Q;$ thus, by (a) and the definition of the
PI-degree, $\PIdeg(D) = \PIdeg(Q) = n,$
showing that (3) implies (1).

Assume now that (1), (2) and (3) hold. Then $Z(D)\subset D_0$
is a domain. This implies that $D$ is prime.

Finally, take $D= \mathbb{C}[x^{\pm 1}]$ with the group $G$ of
order 2 acting by $x \mapsto -x$, to see that (4) does not
imply (1).
\end{proof}

\begin{corollary}
Assume the hypotheses of Proposition \ref{yysec5.1} above and
suppose that
$$\PIdeg(D)=n=|G|.$$
Let $K$ be a subgroup of $\widehat{G}$ and $B$ the subalgebra
$\oplus_{\chi\in K} D_{\chi}$
\begin{enumerate}
\item
The restriction of the automorphisms in $G$ to $B$ induces a group
homomorphism $f:G\to \mathrm{Aut}_{k\text{-}\mathrm{alg}}(B)$ with
image $L$ such that $\widehat{L}=K.$
\item
$B$ is prime with PI-degree $|K|$.
\end{enumerate}
\end{corollary}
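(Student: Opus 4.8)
The plan is to obtain both parts by applying Proposition \ref{yysec5.1} to the graded algebra $B$ equipped with the group $L$, once the relevant duality of finite abelian groups has been set up.

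For (a), I would first observe that $B=\oplus_{\chi\in K}D_\chi$ is a graded subalgebra of $D$, since $K$ is a subgroup of $\widehat{G}$ and the grading is multiplicative. As $G$ is abelian, each $g\in G$ acts on the component $D_\chi$ as the scalar $\chi(g)$, so $g$ stabilises $B$ and restriction yields $f\colon G\to\Aut_{k\text{-}\mathrm{alg}}(B)$. Its kernel is the annihilator $K^{\perp}=\{g\in G: \chi(g)=1\ \forall\,\chi\in K\}$, whence $L=\im f\cong G/K^{\perp}$. Because $n=|G|$ is a unit in $k$, the pairing $G\times\widehat{G}\to k^{\times}$ is nondegenerate and the double-annihilator identity $(K^{\perp})^{\perp}=K$ holds; combined with $\widehat{G/K^{\perp}}\cong(K^{\perp})^{\perp}$ this gives $\widehat{L}=K$, as required. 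Under this identification $D_\chi$ is precisely the $\chi$-component of $B$ for the faithful action of $L$.

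For (b), the strategy is to verify that $(B,L)$ satisfies the hypotheses of Proposition \ref{yysec5.1} and then read off the conclusion from its part (d). Strong grading of $D$ forces $B_\chi B_{\chi'}=D_\chi D_{\chi'}=D_{\chi+\chi'}=B_{\chi+\chi'}$ for $\chi,\chi'\in K$, so $B$ is strongly $K$-graded; its identity component $B_0=D_0$ is the commutative domain with field of fractions $Q_0$; and $|L|$ divides $n$, so it is a unit in $k$. The one substantive point to check is condition (2) of Proposition \ref{yysec5.1}(d) for $B$, namely that $\widehat{L}=K$ acts faithfully on $Q_0$. Here I would note that the automorphisms $\kappa_\chi$ attached to $\chi\in K$ by a nonzero $u_\chi\in D_\chi$ are exactly the restriction to $K$ of the map $\kappa\colon\widehat{G}\to\Aut(Q_0)$ of Proposition \ref{yysec5.1}(b); since $\PIdeg(D)=n$, the equivalence of (1) and (2) in Proposition \ref{yysec5.1}(d) shows $\kappa$ is injective on $\widehat{G}$, hence on the subgroup $K$. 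Thus condition (2) holds for $(B,L)$, and the implications $(2)\Rightarrow(1)$ and $(2)\Rightarrow(4)$ of Proposition \ref{yysec5.1}(d) give $\PIdeg(B)=|L|=|K|$ together with the primeness of $B$.

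I expect the main obstacle to be bookkeeping rather than mathematics: one must be careful to identify the intrinsic $\widehat{L}$-grading of $B$ coming from the $L$-action with the given indexing of its components by $K\subseteq\widehat{G}$, and to confirm that the conjugation automorphisms $\kappa_\chi$ computed inside $B$ agree with those computed inside $D$ (they do, as both use the same $u_\chi$ and the same $Q_0$). Once these identifications are in place, both parts are immediate consequences of Proposition \ref{yysec5.1}.
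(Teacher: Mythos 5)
Your proof is correct and takes essentially the same route as the paper's: for (b) you argue exactly as the paper does, checking the hypotheses of Proposition \ref{yysec5.1} for the pair $(B,L)$, deducing faithfulness of $K=\widehat{L}$ on $Q_0$ from $\PIdeg(D)=n$ via the equivalence of (1) and (2), and then applying part (d) to obtain primeness and $\PIdeg(B)=|K|$. For (a), your computation $\ker f=K^{\perp}$ together with double-annihilator duality is just a repackaging of the paper's argument, which instead compares the $\widehat{L}$-eigenspace decomposition of $B$ with the given $K$-decomposition and cites Lemma \ref{yysec2.2}; the only step worth making explicit is that $\ker f=K^{\perp}$ requires $D_{\chi}\neq 0$ for every $\chi\in K$, which is immediate from the strong grading of $D$.
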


\begin{proof} (a) It is clear that $G$ acts in $B$. Therefore $L
=G/ker f$ acts on $B$ faithfully. By the hypotheses of
Proposition \ref{yysec5.1}, $|L|$ is a unit in $k$. Hence
$B=\oplus_{\chi\in \widehat{L}} B_{\chi}$. Since every character
of $L$ is a character of $G$, $B_{\chi}= D_{\chi}$ and
hence
$$B=\oplus_{\chi\in \widehat{L}} B_{\chi}=
\oplus_{\chi\in \widehat{L}} D_{\chi}.$$
By Lemma \ref{yysec2.2}, each $D_{\chi}\neq 0$ for all
$\chi\in \widehat{L}$. Thus $K=\widehat{L}$.

(b) Since $\widehat{L}=K$ is a subgroup
of $\widehat{G}$, it acts on $D_0$ faithfully.
By Proposition \ref{yysec5.1}(d) above, $B$ is prime and
$\PIdeg(B)=|K|=|L|$.
\end{proof}

\subsection{The center}
\label{yysec5.2} We return for the remainder of $\S 5$ to the study
of a prime affine regular Hopf $k-$algebra $A$ of GK-dimension one,
with $io(A) = n$. We assume Hypotheses \ref{yysec2.5}(a,b,c,d) (but
not (e)). We fix any algebra homomorphism $\pi: A\to k$ of order
$n=io(A)$ as in Hypotheses \ref{yysec2.5}(a,b,c,d) and subgroups
$G^l_{\pi}, G^r_{\pi},G_{\pi}$ of
$\mathrm{Aut}_{k-\mathrm{alg}}(A)$, with corresponding fixed rings
$A^l_0$, $A^r_0$ and $A_0,$ as in $\S 2$. There are decompositions
$$A=\oplus_{i\in \widehat{G^l_{\pi}}}A^l_i=
\oplus_{i\in \widehat{G^r_{\pi}}}A^r_i,$$ which are strong gradings
thanks to Proposition \ref{yysec2.2}. Let $Q^l_0$ and $Q^r_0$ denote
the fields $Q(A^l_0)$ and $Q(A^r_0)$ respectively, and let $Q_0 =
Q(A_0)$. As before, $Q$ will denote the simple artinian ring $Q(A)$.
We use $\kappa^l_{\chi}$ and $\kappa^r_{\chi}$ to denote the
automorphisms associated to an element $\chi$ from $G^l_{\pi}$ or
$G^r_{\pi}$ respectively, as defined in \eqref{yyE5.1.2}. These form
subgroups $K^l_{\pi}:=\{\kappa^l_{\chi}: \chi \in G^l_{\pi}\}$ and
$K^r_{\pi}:=\{\kappa^r_{\chi}: \chi \in G^r_{\pi}\}$ of
$\mathrm{Aut}(A^l_0)$ and $\mathrm{Aut}(A^r_0)$ respectively. We aim
to compare the actions of two groups on $A^l_0$ - first, since
$G^r_{\pi}$ and $G^l_{\pi}$ commute by Proposition
\ref{yysec2.1}(d), $G^r_{\pi}$ acts on $A^l_0$. Denote by $\rho^l$
the resulting map from $G^r_{\pi}$ to $\mathrm{Aut}(A^l_0),$ bearing
in mind that $\rho^l$ may not be a monomorphism. The second group
acting on $A^l_{0}$ is $\widehat{G^l_{\pi}},$ acting through the map
$\kappa^l$ to $K^l_{\pi}$ as defined in Proposition
\ref{yysec5.1}(b). By Hypotheses \ref{yysec2.5}(b) and Proposition
\ref{yysec5.1}(d), $\kappa^l$ is an isomorphism. As we now show,
these two group actions are closely related. Let's write $$P^l_{\pi}
:= \langle \rho^l (G^r_{\pi}), \kappa^l(G^l_{\pi}) \rangle = \langle
\rho^l (G^r_{\pi}), K^l_{\pi}) \rangle \subseteq
\mathrm{Aut}(A^l_0);$$ notice that we can (and will) mimic this
set-up, starting instead from $A^r_0$  and obtaining a subgroup
$$P^r_{\pi}:= \langle \rho^r (G^l_{\pi}), \kappa^r(G^r_{\pi}) \rangle
= \langle \rho^r (G^l_{\pi}), K^r_{\pi}) \rangle$$ of
$\mathrm{Aut}(A^r_0).$

\begin{lemma} Assume Hypotheses \ref{yysec2.5}(a,b,c,d).
Then $Z(A)\subseteq A_0$.
\end{lemma}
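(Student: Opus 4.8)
The plan is to deduce the inclusion $Z(A)\subseteq A_0 = A^l_0\cap A^r_0$ by applying the abstract graded analysis of Proposition \ref{yysec5.1} separately to each of the two strong gradings of $A$ and then intersecting the two resulting inclusions. The point is that the hypothesis package for this lemma is tailored so that, for both the left and the right grading, the equivalent conditions (1)--(3) of Proposition \ref{yysec5.1}(d) are in force.

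First I would take $D=A$ and $G=G^l_{\pi}$ and verify the standing hypotheses of Proposition \ref{yysec5.1}. The group $G^l_{\pi}$ is finite cyclic, isomorphic to $\langle\pi\rangle$ and acting faithfully on $A$ by Lemma \ref{yysec2.1}(a); its order $n=io(A)=|G^l_{\pi}|$ is a unit in $k$ by Hypotheses \ref{yysec2.5}(d); the eigenspace decomposition $A=\oplus_{\chi\in\widehat{G^l_{\pi}}}A^l_{\pi,\chi}$ is a strong grading by Proposition \ref{yysec2.2}(b); and the identity component $A^l_0$ is a commutative (Dedekind) domain by Hypotheses \ref{yysec2.5}(c). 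So conditions (i) and (ii) of Proposition \ref{yysec5.1} hold. The crucial input is then that condition (1) of Proposition \ref{yysec5.1}(d), namely $\PIdeg(D)=|G|$, is satisfied here: by Hypotheses \ref{yysec2.5}(b) we have $\PIdeg(A)=n$, while Lemma \ref{yysec2.1}(a) gives $n=io(A)=|G^l_{\pi}|$. Proposition \ref{yysec5.1}(d) then forces condition (3), and in particular $Z(A)\subseteq A^l_0$.

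Running the identical argument with $G=G^r_{\pi}$ and identity component $A^r_0$ yields $Z(A)\subseteq A^r_0$; here the only extra observation needed is that $A^r_0$ is again a commutative Dedekind domain, isomorphic to $A^l_0$ via the antipode (Theorem \ref{yysec2.5}(f)), so that Proposition \ref{yysec5.1} applies verbatim, and $|G^r_{\pi}|=n=\PIdeg(A)$ exactly as before. Intersecting the two inclusions and invoking Proposition \ref{yysec2.1}(a) gives $Z(A)\subseteq A^l_0\cap A^r_0=A_0$, as required.

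I do not expect a serious obstacle once Proposition \ref{yysec5.1} is available: the lemma is essentially a translation of the equivalence $(1)\Leftrightarrow(3)$ of that proposition into the language of classical components, carried out once on each side. If I had to isolate the one delicate point, it would be the legitimacy of the right-handed application --- one must be certain that $A^r_0$, and not merely $A^l_0$, is a commutative domain whose grading has the matching PI-degree --- and this is precisely what the left--right symmetry furnished by the antipode in Theorem \ref{yysec2.5}(f) supplies.
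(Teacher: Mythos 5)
Your proposal is correct and is essentially the paper's own proof: the paper likewise invokes Hypothesis \ref{yysec2.5}(b) to get $\PIdeg(A)=n$, applies Proposition \ref{yysec5.1}(d) to each of the two strong gradings to obtain $Z(A)\subseteq A^l_0$ and $Z(A)\subseteq A^r_0$, and concludes via $A_0=A^l_0\cap A^r_0$. Your extra care in checking the hypotheses of Proposition \ref{yysec5.1} for the right-hand grading (commutativity of $A^r_0$ via the antipode, i.e.\ Proposition \ref{yysec2.1}(e)) is exactly what the paper's terse ``similarly'' suppresses.
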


\begin{proof} By Hypotheses \ref{yysec2.5}(b),
$\PIdeg(A) = n,$ and so, by Proposition
\ref{yysec5.1}(d), $Z(A)\subset A^l_0$. Similarly,
$Z(A)\subset A^r_0$. Since $A_0 = A^l_0 \cap A^r_0,$
the assertion follows.
\end{proof}

\begin{proposition} Assume Hypotheses \ref{yysec2.5}(a,b,c,d).
Retain the notation stated so far in \S \ref{yysec5.2}.
\begin{enumerate}
\item $P^l_{\pi}$ is abelian, and in fact $P^l_{\pi} = K^l_{\pi}.$
\item The following statements are equivalent:
\begin{enumerate}
\item[(1)]
$G^l_\pi\cap G^r_\pi=\{1\}$.
\item[(2)]
$|G_{\pi}| = n^2.$
\item[(3)]
$\rho^l (G^r_{\pi}) = K^l_{\pi} = P^l_{\pi}.$
\item[(4)]
$Z(A) = A_0.$
\item[(5)]
$\rho^r (G^l_{\pi}) = K^r_{\pi} = P^r_{\pi}.$
\end{enumerate}
\end{enumerate}
\end{proposition}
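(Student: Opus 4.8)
The plan is to make part (a) carry the weight: once I know $\rho^l(G^r_\pi)\subseteq K^l_\pi$, it follows that $P^l_\pi=\langle\rho^l(G^r_\pi),K^l_\pi\rangle=K^l_\pi$, and this is abelian because $\kappa^l\colon\widehat{G^l_\pi}\to K^l_\pi$ is an isomorphism. To establish the inclusion I would argue as follows. By Lemma \ref{yysec5.2}, $Z(A)\subseteq A_0\subseteq A^r_0=A^{G^r_\pi}$, so each $g\in G^r_\pi$ fixes $Z(A)$ pointwise, hence fixes the subfield $Q(Z(A))\subseteq Q^l_0$ pointwise; thus $\rho^l(g)$ is an automorphism of $Q^l_0$ trivial on $Q(Z(A))$. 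Since $\PIdeg(A)=n=|G^l_\pi|$ by Hypotheses \ref{yysec2.5}(b), Proposition \ref{yysec5.1}(d) applies and, through \eqref{yyE5.1.3}, identifies $(A^l_0)^{K^l_\pi}$ with $Z(A)$ and hence $(Q^l_0)^{K^l_\pi}$ with $Q(Z(A))$. By Artin's theorem $Q^l_0/Q(Z(A))$ is then Galois with group exactly $K^l_\pi$, so any automorphism of $Q^l_0$ fixing $Q(Z(A))$ lies in $K^l_\pi$; in particular $\rho^l(g)\in K^l_\pi$, and it restricts to $A^l_0$ because the elements of $K^l_\pi$ do, by Proposition \ref{yysec5.1}(b). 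This proves (a).

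For part (b) I would first record two identities of fixed rings inside $A^l_0$: firstly $A_0=A^l_0\cap A^r_0=(A^l_0)^{\rho^l(G^r_\pi)}$, and secondly $Z(A)=(A^l_0)^{K^l_\pi}$ by \eqref{yyE5.1.3}. The equivalence (1)$\Leftrightarrow$(2) is then elementary abelian group theory: $G_\pi=G^l_\pi G^r_\pi$ is abelian with $|G^l_\pi|=|G^r_\pi|=n$, so $|G_\pi|=n^2/|G^l_\pi\cap G^r_\pi|$, and $|G_\pi|=n^2$ exactly when $G^l_\pi\cap G^r_\pi=\{1\}$. For (3)$\Leftrightarrow$(4) I use part (a): since $\rho^l(G^r_\pi)\subseteq K^l_\pi$, the two fixed rings $A_0$ and $Z(A)$ agree if and only if $\rho^l(G^r_\pi)$ and $K^l_\pi$ have the same fixed field in $Q^l_0$, which by the Galois correspondence holds if and only if $\rho^l(G^r_\pi)=K^l_\pi=P^l_\pi$. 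The equivalence (4)$\Leftrightarrow$(5) is the left--right mirror image of this.

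It remains to splice (1), (2) into this chain, which I would do by proving (2)$\Rightarrow$(3)$\Rightarrow$(1). Note first that $G^l_\pi\cap G^r_\pi\subseteq\ker\rho^l$, since any element of $G^l_\pi$ already fixes $A^l_0=A^{G^l_\pi}$ pointwise. Hence (3)$\Rightarrow$(1): if $\rho^l(G^r_\pi)=K^l_\pi$ has order $n=|G^r_\pi|$ then $\rho^l$ is injective, so $G^l_\pi\cap G^r_\pi\subseteq\ker\rho^l=\{1\}$. For (2)$\Rightarrow$(3), assume $|G_\pi|=n^2$, so $G_\pi\cong G^l_\pi\times G^r_\pi$. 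The faithful action of $G_\pi$ on $A$, whose order $n^2$ is a unit in $k$, gives a $\widehat{G_\pi}$-grading of $A$ with components $A^l_\chi\cap A^r_\omega$, $(\chi,\omega)\in\widehat{G^l_\pi}\times\widehat{G^r_\pi}$. Because every nonzero homogeneous element for the $G^l_\pi$-grading is regular by Proposition \ref{yysec5.1}(a), the product of two nonzero components is nonzero, so Lemma \ref{yysec2.2} forces all components to be nonzero. In particular $A^l_0\cap A^r_\omega\neq0$ for every $\omega\in\widehat{G^r_\pi}$, which means $G^r_\pi$ acts faithfully on $A^l_0$, i.e. $\ker\rho^l=\{1\}$; combined with (a) this gives $\rho^l(G^r_\pi)=K^l_\pi=P^l_\pi$, which is (3).

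I expect the inclusion $\rho^l(G^r_\pi)\subseteq K^l_\pi$ of part (a) to be the main obstacle: everything afterwards is group theory together with Galois descent, whereas this step is where the Hopf-algebraic content enters, via Lemma \ref{yysec5.2} and \eqref{yyE5.1.3}, which exhibit $Z(A)$ as the common fixed ring and thereby force the action of $G^r_\pi$ on $A^l_0$ into the Galois group $K^l_\pi$. The one routine point needing care throughout is the transition between fixed rings over the domain $A^l_0$ and fixed fields over $Q^l_0$, for which I would invoke the standard identity $Q(B^F)=Q(B)^F$ for a finite group $F$ acting on a commutative domain $B$.
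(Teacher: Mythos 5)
Your proof is correct, and it takes a genuinely different route from the paper's at the two key points. In (a), the paper first proves that $\rho^l(G^r_\pi)$ commutes with $K^l_\pi$ by an eigenvector computation (applying $g\in G^r_\pi$ to the relation $xa=\kappa_\chi(a)x$ for $x$ a $G^r_\pi$-eigenvector in $A^l_\chi$), and then excludes $K^l_\pi\varsubsetneq P^l_\pi$ by a rank count: Lemma \ref{yysec2.2} makes every $\widehat{P^l_\pi}$-eigenspace of $Q^l_0$ nonzero, while $Z(A)$ sits in the trivial eigenspace, forcing $\rank_{Z(A)}(A)>n^2$ against $\PIdeg(A)=n$. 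You get the sharper inclusion $\rho^l(G^r_\pi)\subseteq K^l_\pi$ in one stroke from Artin's theorem, using Lemma \ref{yysec5.2} and \eqref{yyE5.1.3}, so abelianness is a corollary rather than a prerequisite (in the paper it is needed first, to bound $|P^l_\pi|$ and to legitimize Lemma \ref{yysec2.2}). In (b), the paper goes $(2)\Rightarrow(4)\Rightarrow(3)\Rightarrow(1)$, proving $(2)\Rightarrow(4)$ by a rank count (all $A_{ij}\neq 0$, hence $\rank_{Z(A)}(A_{00})=1$, hence $A_{00}$ central) and $(4)\Rightarrow(3)$ by a strong-grading dimension count giving $\dim_{Q_0}(Q^l_0)=|\rho^l(G^r_\pi)|=n$; you instead obtain $(3)\Leftrightarrow(4)$ formally from the Galois correspondence together with (a), and prove $(2)\Rightarrow(3)$ from the same Lemma \ref{yysec2.2} refinement argument the paper uses, but read as faithfulness of the $G^r_\pi$-action on $A^l_0$ rather than as a rank statement. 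Both treatments share $(1)\Leftrightarrow(2)$, $(3)\Rightarrow(1)$, $(4)\Leftrightarrow(5)$, and the crucial observation that homogeneous elements are regular (Proposition \ref{yysec5.1}(a)) so that Lemma \ref{yysec2.2} applies to the refined $\widehat{G_\pi}$-grading. What your version buys is economy: Artin's theorem and the identity $Q(B^F)=Q(B)^F$ absorb all the counting, making part (b) nearly formal. What the paper's version buys is self-containedness: it never leaves the rank-over-the-center and strong-grading techniques developed in Proposition \ref{yysec5.1}, at the cost of two separate counting arguments.
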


\begin{proof} (a) Let $\chi\in \widehat{G^l_{\pi}}$. Then for
all $a \in A^l_0$ and all $x \in A^l_{\chi} \setminus \{0 \},$
\begin{equation}
xa=\kappa_{\chi}(a)x. \label{yyE5.2.1}
\end{equation}
Since $G^l_{\pi}$ commutes with $G^r_{\pi}$, $A^l_{\chi}$ is
$G^r_{\pi}$-stable. So we may take $x$ in \eqref{yyE5.2.1} to
be an eigenvector for $G^r_{\pi},$ say $g(x)=\eta(g)x$ for all
$g\in G^r_{\pi}$. Applying $g$ to \eqref{yyE5.2.1} gives
$$\eta(g)xg(a)=g(\kappa_{\chi}(a))\eta(g) x$$
for all $a \in A^l_0.$ This proves that
$g\kappa_{\chi}=\kappa_{\chi}g,$ and the first claim follows.

Suppose now for a contradiction that $K^l_{\pi} \varsubsetneq
P^l_{\pi}.$ Since $n$ is a unit in $k$, so is $|P^l_{\pi}|$ by
the first claim, and so
\begin{eqnarray}
Q^l_0=\oplus_{\eta\in\widehat{P_{\pi}}}(Q^l_0)_{\eta}
\label{yyE5.2.2}
\end{eqnarray}
with each term $(Q^l_0)_{\eta}\neq 0$ by Lemma \ref{yysec2.2}.
Now $Z(A)\subseteq A_0 \subseteq Q^l_0$ by the above lemma;
moreover $Z(A)$ is fixed by the actions of $G^r_{\pi}$ and
$K^l_{\pi}$ on $Q^l_0$, the former because $Z(A) \subseteq
A^r_0 = A^{G^r_{\pi}}$, the latter because $K^l_{\pi}$ acts
by inner automorphisms of $Q(A)$. Thus, with respect to the
decomposition \eqref{yyE5.2.2}, $Z(A) \subseteq (Q^l_0)_{0}$.
So $\rank_{Z(A)}(A^l_0)>n$ and therefore $\rank_{Z(A)}(A)>n^2$,
contradicting the fact that $\PIdeg(A) = n$.
Therefore $P^l_{\pi}=K^l_{\pi}$.

(b) $(1) \Leftrightarrow (2):$ This is clear from the definition
of $G_\pi$.

$(2)\Rightarrow (4):$ Suppose that $|G_{\pi}| = n^2 ,$ so that
$$G_{\pi} = G^l_{\pi} \times G^r_{\pi},$$ with this group of
order $n^2$ acting faithfully on $A$. Define $A_{ij}:=A^l_i\cap
A^r_j,$ for $0 \leq i,j \leq n-1,$ so, in particular, $A_{00} =
A_0$. Hence there is a decomposition \begin{eqnarray}A=\oplus_{(i,j)
\in \widehat{G_{\pi}}}A_{ij}. \label{gott} \end{eqnarray}  Since
(\ref{gott}) is a refinement of the strong grading $A =
\bigoplus_{i=0}^{n-1}A^l_i ,$ for which the homogeneous elements are
regular by Proposition \ref{yysec5.1}(a), $A$ is a
$\widehat{G_{\pi}}-$domain. Therefore Lemma \ref{yysec2.2} applies,
and we conclude that $A_{ij}\neq 0$ for every $(i,j)\in
\widehat{G_{\pi}}$. Therefore $A_{ij}$ is a torsion free
$Z(A)-$module of rank at least one. However from Hypotheses
\ref{yysec2.5}(b) and the definition of PI-degree, we know that
$\mathrm{rank}_{Z(A)}(A) = n^2$. Therefore
$\mathrm{rank}_{Z(A)}(A_{00}) = 1$. In other words,
$A_{00}Q(Z(A))=Q(Z(A))$, so $A_{00}$ is central and the inclusion of
Lemma \ref{yysec5.2} is an equality.

$(4)\Rightarrow (3):$ Suppose that $Z(A) = A_0 = (A^l_0)^{G^r_{\pi}}$,
so that $Z(Q) = Q_0 = (Q^l_0)^{\rho^l (G^r_{\pi})}$. Now the
$\widehat{G^r_{\pi}}-$grading of $Q$ is strong, by Proposition
\ref{yysec2.2} -- equivalently, the non-zero elements of each
homogeneous component are units in $Q$. Using this equivalent
formulation it is immediately clear that $Q^l_0$ is strongly
$\rho^l (G^r_{\pi})-$graded, so that
\begin{eqnarray}
\mathrm{dim}_{Q_0}(Q^l_0) = |\rho (G^r_{\pi})|.
\label{yyE5.2.3}
\end{eqnarray}
However, since $\PIdeg(A) = n$ by Hypotheses \ref{yysec2.5}(b),
$$
n^2 = \dim_{Z(Q)}(Q) = \dim_{Q^l_0}(Q)(\dim_{Z(Q)}(Q^l_0))
= n.\dim_{Z(Q)}(Q^l_0) = n.\dim_{Q_0}(Q^l_0).
$$
Hence $\dim_{Q_0}(Q^l_0)=n$, so \eqref{yyE5.2.3} shows that
$\rho^l$ is a monomorphism, as required.

$(3)\Rightarrow (1):$ Suppose that $\rho^l (G^r_{\pi}) = K^l_{\pi}$.
Recall that $K^l_{\pi}$ acts faithfully on $Q^l_0$, by Proposition
\ref{yysec5.1}(d). Thus our hypothesis says that $G^r_{\pi}$ is
acting faithfully on $A^l_0 = A^{G^l_{\pi}}$. It follows that
$G^l_{\pi} \cap G^r_{\pi} = \{1\}$.

$(4)\Leftrightarrow (5):$ This follows from the equivalence of (4)
and (3), since (4) is left-right invariant.

\end{proof}

Recall that $\widetilde{A_0}=\widetilde{A_\pi}$ is defined in
Proposition \ref{yysec2.1}(g).

\begin{theorem}
Assume Hypotheses \ref{yysec2.5}(a,b,c,d). Suppose that the algebra
homomorphism $\pi$ in Hypotheses \ref{yysec2.5} satisfies the
condition $G^l_{\pi} \cap G^r_{\pi} = \{1\}$. Then
$A_0=\widetilde{A_0}$ - that is, $A_0 = Z(A)$ is a Hopf subalgebra
of $A$.
\end{theorem}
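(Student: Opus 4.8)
The plan is to prove $\Delta(A_0)\subseteq A_0\otimes A_0$. Since $\widetilde{A_0}$ is the largest Hopf subalgebra contained in $A_0$ (Proposition \ref{yysec2.1}(g,h)) and $\widetilde{A_0}\subseteq A_0$ in any case, this identity gives $A_0=\widetilde{A_0}$ at once; the equality $A_0=Z(A)$ is already available from the preceding Proposition (equivalence of (1) and (4) in Proposition \ref{yysec5.2}(b)), so it is exactly the coalgebra condition $\Delta(A_0)\subseteq A_0\otimes A_0$ that remains.

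First I would record the graded set-up forced by the hypothesis. By Proposition \ref{yysec5.2}(b) the condition $G^l_\pi\cap G^r_\pi=\{1\}$ gives $G_\pi=G^l_\pi\times G^r_\pi$, of order $n^2$, so $A$ carries the refined grading $A=\bigoplus_{i,j}A_{ij}$ with $A_{ij}:=A^l_i\cap A^r_j$; every $A_{ij}\neq 0$ by Lemma \ref{yysec2.2}, and every nonzero homogeneous element is regular in $A$ by Proposition \ref{yysec5.1}(a). Now fix $a\in A_0$. Proposition \ref{yysec2.1}(c) yields $\Delta(a)\in A^l_0\otimes A^r_0$, and substituting this into the identity $(Id_A\otimes\Xi^l_\pi)\Delta=(\Xi^r_\pi\otimes Id_A)\Delta$ of Proposition \ref{yysec2.1}(f) and comparing eigenvalues forces the ``inner degrees'' to agree: $\Delta(a)=\sum_j w_j$ with $w_j\in A_{0j}\otimes A_{j0}$. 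The entire problem is therefore to show $w_j=0$ for $j\neq 0$.

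The device for this is that $a$ is central, so $\Delta(a)$ commutes with $\Delta(A)$ in $A\otimes_k A$. I would choose $0\neq X\in A_{10}$ and write $\Delta(X)=\sum_s\theta_s$ with $\theta_s\in A_{1s}\otimes A_{s0}$ (again Proposition \ref{yysec2.1}(c,f)), then compare the two sides of $\Delta(a)\Delta(X)=\Delta(X)\Delta(a)$ componentwise. Two observations collapse this to one clean relation. The second tensor slots of $w_j$ and of $\theta_s$ both lie in the \emph{commutative} Dedekind domain $A^r_0$ (Theorem \ref{yysec2.5}(f)), hence commute; and the first slots lie in $A^l_0$ and in $A^l_1$, where Proposition \ref{yysec5.2}(b)(3) is decisive: because $\rho^l(G^r_\pi)=K^l_\pi$, conjugation by any nonzero element of $A^l_1$ acts on $A^l_0$ as a generator of $G^r_\pi$, scaling $A_{0j}$ by $\xi^{-jc}$ for some $c$ coprime to $n$ (independent of $s$, by Proposition \ref{yysec5.1}(b)). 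Carrying these through gives $w_j\theta_s=\xi^{-jc}\,\theta_s w_j$ for all $s$, and hence $\Delta(X)\,\Gamma=0$, where $\Gamma:=\sum_j(\xi^{-jc}-1)\,w_j\in A\otimes_k A$.

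It remains to cancel $\Delta(X)$, and this is the step I expect to be the main obstacle: one must know $\Delta(X)$ is not a zero-divisor. Here I would invoke the canonical map $\gamma:A\otimes_k A\to A\otimes_k A$, $\gamma(x\otimes y)=\sum x_1\otimes x_2 y$, which is bijective (its inverse is $x\otimes y\mapsto\sum x_1\otimes S(x_2)y$) and is an isomorphism of left $\Delta(A)$-modules from $A\otimes_k A$, with $A$ acting on the first factor, onto $A\otimes_k A$ with $\Delta(A)$ acting by left multiplication. The source is patently $\Delta(A)$-free, so left multiplication by the regular element $\Delta(X)$ is injective on $A\otimes_k A$, and this injectivity survives the central localization that produces $Q\otimes_k Q$. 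Thus $\Gamma=0$; since $\xi^{-jc}\neq 1$ for $j\neq 0$ and the $w_j$ occupy distinct bigraded components, $w_j=0$ for all $j\neq 0$, whence $\Delta(a)=w_0\in A_0\otimes A_0$. The only delicate points are the bookkeeping of the commutation scalar in the componentwise comparison and the verification that $\gamma$ is $\Delta(A)$-linear; the freeness and localization arguments are then routine.
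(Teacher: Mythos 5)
Your proof is correct, and its overall skeleton matches the paper's: obtain $Z(A)=A_0$ from Proposition \ref{yysec5.2}(b), exploit centrality of $a\in A_0$ to get a commutation identity between $\Delta(a)$ and the coproduct of a suitable homogeneous element, and then cancel that coproduct using a regularity statement in $A\otimes A$. The bookkeeping differs only cosmetically: the paper fixes $\eta\in\widehat{G^r_{\pi}}$, takes $0\neq x\in A_{0,\eta}$, derives $\Delta(a)=(\mathrm{Id}\otimes\kappa^r_{\eta})(\Delta(a))$ (its \eqref{yyE5.2.9}) and concludes via \eqref{yyE5.1.3} that all the $a_2$ (and by symmetry the $a_1$) lie in $Z(A)$, whereas you use the full bigraded decomposition $\Delta(a)=\sum_j w_j$ with $w_j\in A_{0j}\otimes A_{j0}$, a single $X\in A_{10}$, and kill the off-diagonal $w_j$ directly; these are equivalent formulations, and your appeal to Proposition \ref{yysec5.2}(b)(3) to see that conjugation by elements of $A^l_1$ scales $A_{0j}$ by a fixed primitive root raised to the $j$-th power is exactly where the hypothesis $G^l_{\pi}\cap G^r_{\pi}=\{1\}$ enters, just as in the paper. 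The genuinely different step is your proof that $\Delta(X)$ is not a zero divisor. The paper proves its claim \eqref{noh} by passing to $Q\otimes Q$, using the strong $\widehat{G^l_{\pi}}\times\widehat{G^r_{\pi}}$-grading to reduce to showing $Q^l_0\otimes Q^r_0$ is a domain, and citing Vamos's theorem, which requires $k$ algebraically closed. Your argument via the Galois map $\gamma(x\otimes y)=\sum x_1\otimes x_2y$ --- bijective, and intertwining the free first-factor $A$-action with left multiplication by $\Delta(A)$ --- deduces left-regularity of $\Delta(X)$ from regularity of $X$ (Proposition \ref{yysec5.1}(a)) alone. This is more elementary and more general: it needs no localization, no algebraic closure, and works in any Hopf algebra; on the other hand the paper's route proves the stronger fact that \emph{every} nonzero element of each $Q^l_{\chi}\otimes Q^r_{\eta}$ is regular in $Q\otimes Q$, which your trick does not give (nor do you need it). One small caution: your $\gamma$-argument gives regularity of $\Delta(X)$ on one side only, so keep the cancellation on that same side --- which your derivation $\Delta(X)\Gamma=0$, obtained by using the commutation relation $\theta_s w_j=\xi^{jc}w_j\theta_s$ to move $\Delta(X)$ to the left, indeed does.
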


\begin{proof}
By Proposition \ref{yysec5.2}(b) above, $Z(A)=A_0$. It remains to show
that $A_0$ is a Hopf subalgebra of $A.$ Let $a \in Z(A).$ Since
$S(A_0)=A_0$ by Proposition \ref{yysec2.1}(e), we need only to
show that
\begin{eqnarray}
\Delta(a) := \sum a_1 \otimes a_2 \in Z(A)\otimes Z(A).
\label{yyE5.2.4}
\end{eqnarray}
There is a decomposition
$$A=\oplus_{(\chi,\eta)\in \widehat{G_{\pi}}} A_{\chi\eta}$$
where $A_{\chi\eta}=A^l_{\chi}\cap A^r_{\eta}$. The argument to
prove $(2) \Rightarrow (4)$ of Proposition \ref{yysec5.2}(b) shows
that
$$
A_{\chi\eta} \neq 0 \quad \forall \; \chi \in
\widehat{G^l_{\pi}},\eta \in \widehat{G^r_{\pi}}.
$$
Now fix $\eta \in \widehat{G^r_{\pi}},$ and let $0\neq x\in
A_{0,\eta}$. By Proposition \ref{yysec2.1}(c),
\begin{eqnarray}
\Delta(x)\in A^l_0\otimes A^r_{\eta}\subseteq
Q^l_0\otimes Q^r_{\eta}.
\label{yyE5.2.5}
\end{eqnarray}
For all $x'\in A^r_{\eta}$ and $a'\in A^r_0$, the definition of
$\kappa^r_{\eta}$ in Proposition \ref{yysec5.1}(b) yields
\begin{eqnarray}
x' a'=\kappa^r_{\eta}(a') x'.
\label{yyE5.2.6}
\end{eqnarray}
The left hand entries of $\Delta (a)$ and of $\Delta (x)$ are in
the commutative ring $A^l_0$, and so commute. Applying also
\eqref{yyE5.2.6} to the right hand entries of $\Delta(x),$ we
find that
\begin{eqnarray}
\Delta(x)\Delta(a)=(Id\otimes \kappa^r_{\eta})(\Delta(a))\Delta(x).
\label{yyE5.2.7}
\end{eqnarray}
Moreover, $a \in Z(A)$ and $\Delta$ is an algebra homomorphism, so
that
\begin{eqnarray}
\Delta(a)\Delta(x)=\Delta(x)\Delta(a)
\label{yyE5.2.8}
\end{eqnarray}
Next, we claim that \begin{eqnarray}\Delta(x) \textit{ is a not a
zerodivisor in } A \otimes A. \label{noh} \end{eqnarray}  In view of
\eqref{yyE5.2.5}, it suffices to show that $Q^l_{\chi}\otimes
Q^r_{\eta} \setminus \{ 0 \}$ consists of nonzerodivisors in
$Q\otimes Q$. As in the proof of Proposition \ref{yysec5.1},
$Q^l_{\chi}=Q^l_0 u_{\chi}$ and $Q^r_{\eta}=Q^r_0 u_{\eta}$ with
$u_{\chi}$ and $u_{\eta}$ invertible in $Q,$ so it is only necessary
to show that the elements of $Q^l_0\otimes Q^r_0 \setminus \{ 0 \}$
are not zero divisors in $Q \otimes Q.$  But $Q^l_0\otimes Q^r_0$ is
a homogeneous component of the strongly $\widehat{G^l_{\pi}}\times
\widehat{G^r_{\pi}}$-graded algebra $Q \otimes Q$, whose homogeneous
components have the form $Q^l_{\alpha} \otimes Q^r_{\beta} =(Q^l_0
\otimes Q^r_0)(u_{\alpha} \otimes u_{\beta}),$ with $u_{\alpha}
\otimes u_{\beta}$ a unit. Thus it suffices to show that
$Q^l_0\otimes Q^r_0$ is a commutative domain, which follows from the
fact that $k$ is algebraically closed, \cite{V}. This proves
(\ref{noh}). With \eqref{yyE5.2.7} and \eqref{yyE5.2.8}, this
implies that
\begin{eqnarray}
\Delta(a)=(Id \otimes \kappa^r_{\eta})(\Delta(a)).
\label{yyE5.2.9}
\end{eqnarray}
Now all the $a_2$ terms in \eqref{yyE5.2.4} are in $A^r_0$ by
Proposition \ref{yysec2.1}(c). Thus it follows from \eqref{yyE5.2.9}
that each $a_2$ belongs to $(A^r_0)^{\kappa^r_{\eta}}.$ Allowing
$\eta$ to vary through $\widehat{G^r_{\pi}},$ we see from
\eqref{yyE5.1.3} in Proposition \ref{yysec5.1}(d) that all the terms
$a_2$ in \eqref{yyE5.2.4} belong to $Z(A)$. By symmetry, all the
$a_1$ terms are in $Z(A)$ also. This completes the proof.
\end{proof}

\section{Classification II: $im(H)=io(H)>1$}
\label{yysec6}
\subsection{Hypotheses and the main theorem}
\label{yysec6.1} We return in \S \ref{yysec6} to the main focus of
this paper, namely the structure of an affine prime regular Hopf
algebra $H$ of GK-dimension one, over the algebraically closed field
$k$. Recall the definitions of the integral order of $H$, $io(H)$,
and the integral minor of $H$, $im(H)$, in \S \ref{yysec2.3} and \S
\ref{yysec2.4}. As noted in Theorem \ref{yysec2.5}, $io(H)$ equals
the PI degree of $H$, and so in particular is finite, and $im(H)$ is
a positive integer dividing $io(H)$. As in \S \ref{yysec4.1} it will
be convenient for later steps in the proof to weaken our hypotheses
slightly; namely, we'll assume Hypotheses \ref{yysec2.5}, with the
homomorphism $\pi$ of Hypotheses \ref{yysec2.5}(b) satisfying the
condition:
\begin{eqnarray}
n:= |G^l_\pi|>1 \quad \textit{and}\quad G^l_\pi\cap G^r_\pi=\{1\}
\label{yyE6.1.1}.
\end{eqnarray}
The main result of this section, which we now state, shows that
\eqref{yyE6.1.1} is equivalent to $io(H)=im(H):$

\begin{theorem}
Suppose $(H,\pi)$ satisfies Hypotheses \ref{yysec2.5} and
\eqref{yyE6.1.1}. Then $H$ is isomorphic as a Hopf algebra either to
the Taft algebra $H(n,t,\xi)$ of Example \ref{yysec3.3} with
$\gcd(t,n)=1$, or the algebra $B(n,w,\xi)$ in Example
\ref{yysec3.4}. As a consequence, $io(H)=im(H)$.
\end{theorem}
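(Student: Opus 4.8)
The plan is to reduce the classification to the analysis of a single finite-dimensional Hopf algebra attached to $H$, and then to lift generators back to $H$, treating the primitive and group-like cases separately.

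First I would record the structural input from \S\ref{yysec5}. Since $(H,\pi)$ satisfies \eqref{yyE6.1.1}, Theorem \ref{yysec5.2} shows that $H_0=H^{G^l_\pi}\cap H^{G^r_\pi}$ equals the center $Z(H)$ and is a Hopf subalgebra of $H$. By Theorem \ref{yysec2.5}(f) $H_0$ is an affine commutative Dedekind Hopf domain of GK-dimension one, so the commutative classification (Example \ref{yysec3.1}) forces the dichotomy $H_0\cong k[x]$ (the \emph{primitive case}) or $H_0\cong k[x^{\pm 1}]$ (the \emph{group-like case}); in either case $H_0$ is a principal ideal domain. Moreover \eqref{yyE6.1.1} together with Proposition \ref{yysec5.2}(b) gives $G_\pi=G^l_\pi\times G^r_\pi$ of order $n^2$, and by Proposition \ref{yysec2.2} $H$ is strongly $\widehat{G_\pi}\cong C_n\times C_n$ graded with identity component $H_0$ and with homogeneous components that are invertible $H_0$-modules. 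As $H_0$ is a PID these components are free of rank one, so $H$ is a free $H_0$-module of rank $n^2$.

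Next I would introduce the \emph{twistor} $\overline{H}:=H/(\ker\epsilon\cap H_0)H$. Since $H_0=Z(H)$ is a central (hence normal) Hopf subalgebra, $(\ker\epsilon\cap H_0)H$ is a Hopf ideal and $\overline{H}$ is a finite-dimensional Hopf algebra of dimension $n^2$ onto which $H$ surjects as Hopf algebras; it inherits the $C_n\times C_n$ grading with one-dimensional homogeneous pieces. The central step, which I expect to be the main obstacle, is to determine $\overline{H}$: I would show that it is a Taft algebra of dimension $n^2$, generated by a group-like $\bar g$ of order $n$ and a skew-primitive $\bar y$ with $\bar y^n=0$ and $\bar y\bar g=\xi\bar g\bar y$ for a \emph{primitive} $n$th root of unity $\xi$. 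The two independent $C_n$-gradings, the descended winding automorphisms, and the hypothesis $G^l_\pi\cap G^r_\pi=\{1\}$ are precisely what force $\xi$ to be primitive rather than merely a root of unity; primeness of $H$ is invoked here and below.

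I would then lift this structure to $H$. Choosing homogeneous generators of the free rank-one components as in the proofs of Propositions \ref{yysec4.2} and \ref{yysec4.3}, one selects an invertible $g\in H$ lifting $\bar g$ and, using Proposition \ref{yysec2.1}(c) and the counit axiom, adjusts $g$ to be group-like; one lifts $\bar y$ similarly, the generator of $H_0$ supplying the remaining generator. In the primitive case $H_0=k[z]$ with $z$ primitive, comparing $n$th powers shows a suitable lift $x$ satisfies $x^n\in kz$, and after rescaling one recovers $g^n=1$, $xg=\xi gx$, $\Delta(x)=x\otimes g^t+1\otimes x$ with $\gcd(t,n)=1$ (the coprimality being the translation of $G^l_\pi\cap G^r_\pi=\{1\}$), identifying $H\cong H(n,t,\xi)$. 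In the group-like case $H_0=k[x^{\pm1}]=Z(H)$ with $x$ group-like, lifting a group-like $g$ and a $(g,1)$-primitive $y$ yields $yg=\xi gy$, $xg=gx$, $xy=yx$ and $y^n=1-x^w$ for some $w$, which is the presentation \eqref{yyE3.4.12}, so $H\cong B(n,w,\xi)$. Finally, once $H$ is identified, its integral order and integral minor can be read off from Examples \ref{yysec3.3} (with $\gcd(t,n)=1$) and \ref{yysec3.4}, giving $io(H)=im(H)=n$, which yields the stated consequence.
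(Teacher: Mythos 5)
Your proposal is correct and follows essentially the same route as the paper: reduce to the finite-dimensional twistor $H/(\ker\epsilon\cap H_0)H$ via Theorem \ref{yysec5.2} and the strong $C_n\times C_n$-grading, identify it with a Taft twistor, lift the group-like and skew-primitive generators back to $H$, and split into the primitive and group-like cases to obtain $H(n,t,\xi)$ with $\gcd(t,n)=1$ or $B(n,w,\xi)$. The steps you flag as the main obstacles (determining the twistor's coalgebra structure and lifting it to $H$) are exactly where the paper expends its technical effort, in \S\ref{yysec6.5}--\ref{yysec6.7}.
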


\subsection{Generators and relations}
\label{yysec6.2} Fix a primitive $n$th root $\zeta$ of 1 in $k,$ and
define $\chi \in \widehat{G^l_{\pi}}$ and $\eta \in
\widehat{G^r_{\pi}}$ by setting
$$\chi(\Xi^l_{\pi}):=\zeta \quad \textit{and}
\quad \eta(\Xi^r_{\pi})=\zeta.$$
Thus $\widehat{G^l_{\pi}}=\{\chi^i\;|\; 0 \leq i \leq n-1\}$,
$\widehat{G^r_{\pi}}=\{\eta^i\;|\; 0 \leq i \leq n-1\}$, and
$\widehat{G_{\pi}}=\{\chi^i\eta^j\;|\; 0 \leq i,j \leq n-1\},$
so there are group gradings
\begin{eqnarray}
H=\oplus_{\chi^i\in \widehat{G^l_{\pi}}} H^l_{i}
= \oplus_{\eta^j\in \widehat{G^r_{\pi}}} H^r_{j}
=\oplus_{0 \leq i,j \leq n-1} H_{ij},
\label{yyE6.2.1}
\end{eqnarray}
where $H_{ij}=H^l_{i}\cap H^r_{j}$. The first two of these are
strong gradings by Proposition \ref{yysec2.2}. Since we are assuming
\eqref{yyE6.1.1}, Proposition \ref{yysec5.2}(b) and Theorem
\ref{yysec5.2} confirm that
\begin{eqnarray}\label{yyE6.2extra} Z(H)
= H_0 = \widetilde{H_0} = H_{00}.\end{eqnarray} In particular, this
is a Hopf subalgebra of $H$, and hence is either $k[x]$ or $k[x^{\pm
1}],$ yielding the dichotomy - primitive or group-like - of
Definition \ref{yysec4.1}.

Note that the discussion in the proof of $(2) \Rightarrow (4)$
of Proposition \ref{yysec5.2}(b) applies: Each $H_{ij}$
is a torsion-free - and hence free - $H_0$-module of rank 1.
Fix a generator $u_{ij}$ of this free module, for
$i,j = 0, \ldots , n-1,$ with $u_{00}=1.$ Then part of
\eqref{yyE6.2.1} can be rewritten as
\begin{eqnarray}
H=\oplus_{0 \leq i,j \leq n-1} H_0u_{ij}
= \oplus_{0 \leq i,j \leq n-1} u_{ij}H_0.
\label{yyE6.2.2}
\end{eqnarray}
The definitions ensure that
$$\Xi^l_{\pi}(u_{ij}a)=\zeta^i u_{ij}a \quad \textit{and}
\quad \Xi^r_{\pi}(u_{ij}a)=\zeta^j u_{ij}a$$
for all $a\in H_0$.

Next we refine the notation for the elements of the groups
$K^l_{\pi}$ and $K^r_{\pi}$ of \eqref{yysec5.2}. Fix a nonzero
element $x\in H^l_1,$ and define $\kappa_l$ to be the automorphism
of $H^l_0$ defined by
$$\kappa_l(a)= xax^{-1}, \textit{ for all } a\in H^l_0. $$
In view of Proposition (\ref{yysec5.1})(b), for $i = 1, \ldots ,
n-1$ and for $j = 0, \ldots , n-1,$
\begin{eqnarray}
\kappa_l^i(a)=u_{ij}au_{ij}^{-1} \quad \textit{ for all } \quad a\in
H^l_0. \label{yyE6.2.3}
\end{eqnarray}
Similarly, choosing any non-zero element $t$ of $H^r_1$, $\kappa_r$
is the automorphism of $H^r_0$ defined by $\kappa_r(b)=tbt^{-1}
\textit{ for all } b \in H^r_0.$ Thus for $i = 0, \ldots , n-1$ and
for $j = 1, \ldots , n-1,$
\begin{eqnarray}
\kappa_r^j(b)=u_{ij}bu_{ij}^{-1} \quad \textit{ for all } \quad b
\in H^r_0. \label{yyE6.2.4}
\end{eqnarray}
Set $K^l_{\pi}= \langle \kappa_l^1 \rangle =
\{\kappa_l^i : 0 \leq i \leq n-1\}$ and
$K^r_{\pi} = \langle \kappa_r^1 \rangle =
\{\kappa_r^j : 0 \leq j \leq n-1\}$, subgroups of
$\mathrm{Aut}(H^l_0)$ and $\mathrm{Aut}(H^r_0)$ respectively.
By Proposition \ref{yysec5.2}(b) $K^l_{\pi}=\rho^l (G^r_{\pi})$
and $K^r_{\pi}= \rho^r (G^l_{\pi})$, (where here, of course, we
are carrying over the notation $\rho^l,\rho^r$ from
\S \ref{yysec5.2}). This implies that each $H_{0j}$
(and each $Q_{0j}$) is a $K^l_{\pi}$-eigenspace. Thus there are
$n$th roots of unity in $k,$ $\xi_l$ and $\xi_r$ say, such that
$$\kappa^1_l(u_{01})=\xi_l u_{01},
\quad \textit{and}\quad \kappa^1_r(u_{10})= \xi_r u_{10}.$$ Since the
actions of $K^l_{\pi}$ and $K^r_{\pi}$ on $H^l_0$ and $H^r_0$ are
faithful, by Proposition \ref{yysec5.1}(d), $\xi_l$ and $\xi_r$ are
primitive $n$th roots of unity. Using \eqref{yyE6.2.4} and
\eqref{yyE6.2.3},
$$u_{01}u_{10}=\kappa^1_r(u_{10})u_{01}=\xi_ru_{10}u_{01}
=\xi_r\kappa^1_l(u_{01})u_{10} = \xi_r \xi_l u_{01}u_{10}.$$
Thus $\xi_r=\xi_l^{-1}$. We set $\xi:=\xi_r.$ As $\xi$ and
$\zeta$ are primitive $n$th root of unity in $k,$ there are
integers $f$ and $f'$, coprime to $n$ and with $1 \leq
f,f' \leq n-1,$ such that $\xi=\zeta^f$ and $\zeta=\xi^{f'}$.

\begin{lemma} Retain all the above hypotheses and notation.
Let $i,i',j$ and $j'$ be integers from $\{0,1,\ldots , n-1 \}.$
Then $$u_{ij}u_{i'j'}=\xi^{i'j-ij'}u_{i'j'}u_{ij}.$$ In
particular,  $H_{ij}$ commutes with $H_{i'j'}$ if and only if
$i'j-ij'\equiv 0\mod n$.
\end{lemma}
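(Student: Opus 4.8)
The plan is to reduce the entire computation to a single $\xi$-commutation between the two basic generators $u_{10}$ and $u_{01}$, and then to use the centrality of $H_0$ to transport the resulting scalar onto the arbitrary generators $u_{ij}$. So the first thing I would record is the base relation $u_{01}u_{10}=\xi u_{10}u_{01}$. Since $u_{01}\in H_{01}\subseteq H^r_1$ and $u_{10}\in H_{10}\subseteq H^r_0$, applying \eqref{yyE5.1.2} (that is, Proposition \ref{yysec5.1}(b)) to the strong $\widehat{G^r_{\pi}}$-grading gives $u_{01}u_{10}=\kappa_r(u_{10})u_{01}$, and $\kappa_r(u_{10})=\xi u_{10}$ by the choice $\xi=\xi_r$ made just before the statement.

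Next I would set $v_{ij}:=u_{10}^{i}u_{01}^{j}$. Because $u_{10}$ and $u_{01}$ are homogeneous, hence regular by Proposition \ref{yysec5.1}(a), each $v_{ij}$ is nonzero; and tracking the $\Xi^l_{\pi}$- and $\Xi^r_{\pi}$-eigenvalues (which are multiplicative since these maps are algebra automorphisms) places $v_{ij}\in H^l_i\cap H^r_j=H_{ij}$. Iterating the base relation yields the quantum-plane identity $u_{01}^{j}u_{10}^{i'}=\xi^{i'j}u_{10}^{i'}u_{01}^{j}$, from which I obtain $v_{ij}v_{i'j'}=\xi^{i'j}u_{10}^{i+i'}u_{01}^{j+j'}$ and $v_{i'j'}v_{ij}=\xi^{ij'}u_{10}^{i+i'}u_{01}^{j+j'}$. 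Comparing the two gives $v_{ij}v_{i'j'}=\xi^{i'j-ij'}v_{i'j'}v_{ij}$.

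To pass from the $v_{ij}$ to the originally fixed generators $u_{ij}$, I would use that $H_{ij}$ is free of rank one over $H_0$, so $v_{ij}=a_{ij}u_{ij}$ with $0\neq a_{ij}\in H_0=Z(H)$ by \eqref{yyE6.2extra}. Substituting this into the identity of the previous paragraph and cancelling the nonzero central (hence regular) factor $a_{ij}a_{i'j'}$ produces $u_{ij}u_{i'j'}=\xi^{i'j-ij'}u_{i'j'}u_{ij}$, as claimed. For the final assertion, every element of $H_{ij}$ (resp.\ $H_{i'j'}$) is a central multiple of $u_{ij}$ (resp.\ $u_{i'j'}$), so the two components commute exactly when $u_{ij}$ and $u_{i'j'}$ do, i.e.\ when $\xi^{i'j-ij'}=1$; since $\xi$ is a primitive $n$th root of unity, this holds precisely when $i'j-ij'\equiv 0 \bmod n$.

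The one genuinely substantive point, and the step I would watch most carefully, is the transport in the third paragraph. It is legitimate \emph{precisely} because $H_0=Z(H)$, so that the scaling factors $a_{ij}$ commute past everything and therefore do not perturb the commutation scalar; moreover centrality guarantees these factors are regular, which is what licenses the cancellation. Were $H_0$ only commutative rather than central, the convenient generators $v_{ij}$ and the fixed generators $u_{ij}$ could a priori carry different commutation constants and this reduction would fail. Everything else is routine bookkeeping with the two strong gradings.
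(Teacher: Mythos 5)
Your proof is correct and follows essentially the same route as the paper's: both arguments reduce everything to the base relation $u_{01}u_{10}=\xi u_{10}u_{01}$ and exploit the fact that $u_{ij}$ differs from $u_{10}^iu_{01}^j$ only by a nonzero central factor, which cannot perturb a commutation scalar. The sole (cosmetic) difference is that the paper localizes at $Z(H)\setminus\{0\}$ so that these central factors become units of $Q(H)$, whereas you stay inside $H$ and cancel regular central factors instead.
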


\begin{proof} We can pass to the simple Artinian quotient ring
$Q(H)$ of $H$ by inverting $Z(H)\setminus \{0 \} = H_{00}
\setminus \{0\}.$ In this way we see that $Q(H)=\oplus Q_{ij}$ is
$\widehat{G_{\pi}}$-graded . Here, $\widehat{G_{\pi}} =
\widehat{G^l_{\pi}} \times \widehat{G^r_{\pi}}
\cong {\mathbb Z}_n \times {\mathbb Z}_n,$ and this a strong
grading, since each $Q_{ij}$ is a one-dimensional vector space
over the field $Q_0 = Q(Z(H)),$ spanned by the unit $u_{ij}$ of
$Q(H).$ Working inside $Q(H)$ and multiplying our original choices
of the elements $u_{ij}$ by non-zero central elements as
necessary, we may assume for the purposes of the present
calculation that $u_{ij}=u_{10}^iu_{01}^j$. Since the adjustments
are central, this will not affect the truth or otherwise of the
commutation relations we are trying to prove. Now
$$
\begin{aligned}
u_{ij}u_{i'j'}(u_{i'j'}u_{ij})^{-1}
&= (u_{10})^i(u_{01})^j(u_{10})^{i'}(u_{01})^{j'}
(u_{01})^{-j}(u_{10})^{-i}(u_{01})^{-j'}(u_{10})^{-i'}\\
&=\xi_r^{i'j-ij'}=\xi^{i'j-ij'}
\end{aligned}
$$
as required.
\end{proof}

\begin{proposition} Retain the above hypotheses and notation.
Let $i,j \in \{0,1,\ldots , n-1 \}$.
\begin{enumerate}
\item Conjugation of $Q(H)$ by $u_{ij}$ restricts to an
automorphism of $H,$ namely
$((\Xi^l_{\pi})^j(\Xi^r_{\pi})^{-i})^f$. That is,
$$((\Xi^l_{\pi})^j(\Xi^r_{\pi})^{-i})^f(x)=u_{ij}xu_{ij}^{-1}$$
for all $x\in H$. \item $(\Xi^l_{\pi})^f(x)=u_{01}xu_{01}^{-1}$
and $(\Xi^r_{\pi})^f=u_{10}^{-1}xu_{10}$ for all $x\in H$. \item
The element $u_{ij}$ is normal in $H$.
\end{enumerate}
\end{proposition}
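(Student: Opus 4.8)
The plan is to establish part (a) by a direct computation on the $\widehat{G_{\pi}}$-graded decomposition, and then to read off (b) and (c) as immediate consequences. First I would reduce everything to the action of conjugation by $u_{ij}$ on the homogeneous pieces. Since each $u_{ij}$ is a unit of $Q(H)$ (recall $Q_{ij}=Q_0u_{ij}$ and the nonzero homogeneous elements of the strongly graded semisimple artinian ring $Q(H)$ are units), the map $c_{ij}:q\mapsto u_{ij}qu_{ij}^{-1}$ is a well-defined automorphism of $Q(H)$. To show it restricts to $H$, write an arbitrary $x\in H$ as $x=\sum_{i',j'}u_{i'j'}a_{i'j'}$ with $a_{i'j'}\in H_0$, using the free-module decomposition \eqref{yyE6.2.2}. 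Because $H_0=Z(H)$ by \eqref{yyE6.2extra}, $u_{ij}$ commutes with each $a_{i'j'}$, so $c_{ij}(x)=\sum_{i',j'}(u_{ij}u_{i'j'}u_{ij}^{-1})a_{i'j'}$. The preceding Lemma gives $u_{ij}u_{i'j'}u_{ij}^{-1}=\xi^{i'j-ij'}u_{i'j'}$, whence $c_{ij}(x)=\sum_{i',j'}\xi^{i'j-ij'}u_{i'j'}a_{i'j'}\in H$. Thus $c_{ij}$ scales each component $H_{i'j'}$ by $\xi^{i'j-ij'}$ and in particular restricts to an automorphism of $H$.

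Next I would identify this diagonal automorphism with $((\Xi^l_{\pi})^j(\Xi^r_{\pi})^{-i})^f$ by comparing the two on each $H_{i'j'}$. Since $H_{i'j'}\subseteq H^l_{i'}\cap H^r_{j'}$, the displayed formulas preceding the Lemma show that $\Xi^l_{\pi}$ acts on $H_{i'j'}$ as $\zeta^{i'}$ and $\Xi^r_{\pi}$ as $\zeta^{j'}$, so $(\Xi^l_{\pi})^j(\Xi^r_{\pi})^{-i}$ acts there as $\zeta^{i'j-ij'}$, and its $f$-th power acts as $\zeta^{f(i'j-ij')}=\xi^{i'j-ij'}$, using $\xi=\zeta^f$. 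As two additive maps agreeing on every homogeneous summand of the grading \eqref{yyE6.2.1} agree on all of $H$, part (a) follows.

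Part (b) is the specialization of (a) to $(i,j)=(0,1)$, which gives $c_{01}=(\Xi^l_{\pi})^f$, i.e. $(\Xi^l_{\pi})^f(x)=u_{01}xu_{01}^{-1}$; and to $(i,j)=(1,0)$, which gives $c_{10}=(\Xi^r_{\pi})^{-f}$, so inverting yields $(\Xi^r_{\pi})^f(x)=u_{10}^{-1}xu_{10}$. Part (c) is immediate from (a): since both $c_{ij}$ and its inverse $c_{ij}^{-1}$ (conjugation by $u_{ij}^{-1}$) carry $H$ onto $H$, we have $u_{ij}Hu_{ij}^{-1}=H$, and multiplying on the right by $u_{ij}$ gives $u_{ij}H=Hu_{ij}$, which is the normality of $u_{ij}$.

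The only delicate point is the exponent bookkeeping in the identification step: keeping straight that $\Xi^l_{\pi}$ contributes $\zeta^{i'}$ while $\Xi^r_{\pi}$ contributes $\zeta^{j'}$, respecting the sign $-i$ in the exponent of $\Xi^r_{\pi}$, and passing correctly through $\zeta^f=\xi$. An error there would misname the automorphism even though its mere existence is clear from the first paragraph. Everything else is formal, resting only on the Lemma's commutation relation and the centrality $H_0=Z(H)$.
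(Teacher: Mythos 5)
Your proof is correct and follows essentially the same route as the paper's: both use the free decomposition $H=\oplus_{i',j'}u_{i'j'}H_0$ together with the facts that $H_0=Z(H)$ is fixed by conjugation and by the winding automorphisms, reducing everything to checking the two maps agree on the basis elements $u_{i'j'}$, where the Lemma's relation $u_{ij}u_{i'j'}u_{ij}^{-1}=\xi^{i'j-ij'}u_{i'j'}$ matches the eigenvalue $\zeta^{f(i'j-ij')}=\xi^{i'j-ij'}$ of $((\Xi^l_{\pi})^j(\Xi^r_{\pi})^{-i})^f$ on $H_{i'j'}$. The paper compresses this verification into a single sentence, while you spell out the scalar bookkeeping and the deduction of (b) and (c); the content is identical.
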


\begin{proof} (a) By \eqref{yyE6.2.2}, $H=\oplus_{ij} u_{ij}H_0$
and the center $H_0$ is fixed by $\Xi_l$ and $\Xi_r$. It suffices
therefore to show that
$$((\Xi^l_{\pi})^j(\Xi^r_{\pi})^{-i})^f(u_{i'j'})
=u_{ij}u_{i'j'}u_{ij}^{-1}.$$
This follows from the above lemma.

(b,c) are special cases of (a).
\end{proof}

\subsection{The ideal $J_{iq}$}
\label{yysec6.3} Recall the definition in (\ref{yysec2.3}) of the
integral annihilator $J_{iq}$, and recall the comment which follows
Hypotheses \ref{yysec2.5}.

\begin{lemma}
Continue with the notations and hypotheses introduced so far
in \S 6. Let $i,j \in \{0,1,\ldots , n-1 \}$,
\begin{enumerate}
\item
$S(H^l_{i})=H^r_{-i}$ and $S(H_{ij})=H_{-j,-i}$,
(where the suffixes are interpreted $\mathrm{mod}\; n$).
\item
If $i\neq j$ then $\epsilon(H_{ij})=0$.
\item
$\epsilon(u_{ii})\neq 0$.
\item
If $i\neq j$ then $H_{ij}\subseteq J_{iq}$.
\end{enumerate}
\end{lemma}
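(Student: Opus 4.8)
The plan is to treat the four assertions in turn: (a) and (b) are direct eigenvalue computations, (c) needs a short detour through the strong grading, and (d) falls out of (b). I expect the only genuinely delicate point to be (c).

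For (a) the key input is the intertwining relation $\Xi^r_{\pi}S=S(\Xi^l_{\pi})^{-1}$ of Proposition \ref{yysec2.1}(e), together with its left--right mirror $\Xi^l_{\pi}S=S(\Xi^r_{\pi})^{-1}$ (obtained by the symmetric computation, which is what yields the second inclusion in that proposition). If $a\in H^l_i$, so $\Xi^l_{\pi}(a)=\zeta^i a$, then $\Xi^r_{\pi}(S(a))=S((\Xi^l_{\pi})^{-1}(a))=\zeta^{-i}S(a)$, whence $S(H^l_i)\subseteq H^r_{-i}$; symmetrically $S(H^r_j)\subseteq H^l_{-j}$, and intersecting gives $S(H_{ij})\subseteq H^r_{-i}\cap H^l_{-j}=H_{-j,-i}$. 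To promote these inclusions to equalities I would use bijectivity of $S$: if an automorphism $\phi$ of a space carries a decomposition $\oplus V_\alpha$ into a second decomposition $\oplus W_\alpha$ over the same index set with $\phi(V_\alpha)\subseteq W_\alpha$, then surjectivity of $\phi$ and the directness of $\oplus W_\alpha$ force $\phi(V_\alpha)=W_\alpha$. Applying this to $H=\oplus_i H^l_i=\oplus_i H^r_{-i}$ and to $H=\oplus_{ij}H_{ij}$ (reindexed by the involution $(i,j)\mapsto(-j,-i)$) yields $S(H^l_i)=H^r_{-i}$ and $S(H_{ij})=H_{-j,-i}$.

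For (b) I would record the identities $\epsilon\circ\Xi^l_{\pi}=\pi=\epsilon\circ\Xi^r_{\pi}$, each immediate from the counit axiom, e.g. $\epsilon(\Xi^l_{\pi}(a))=\sum\pi(a_1)\epsilon(a_2)=\pi(a)$. For $a\in H_{ij}$ these give $\zeta^i\epsilon(a)=\pi(a)=\zeta^j\epsilon(a)$, so $(\zeta^i-\zeta^j)\epsilon(a)=0$; as $\zeta$ is a primitive $n$th root of unity, $i\neq j$ forces $\epsilon(a)=0$. For (c) I would first produce, for each $i$, an element of $H^l_i$ with nonzero counit, exactly as in the proof of Proposition \ref{yysec2.2}(b): pick $0\neq a\in H^l_i$ (nonzero since the grading is strong), write $\Delta(a)=\sum a_1\otimes a_2$ with $a_1\in H^l_i$ by Proposition \ref{yysec2.1}(c), and note $a=\sum\epsilon(a_1)a_2\neq 0$ forces $\epsilon(a_1)\neq 0$ for some such $a_1=:b$. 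Decomposing $H^l_i=\oplus_j H_{ij}$, part (b) shows the counit annihilates every summand with $j\neq i$, so $\epsilon(H^l_i)=\epsilon(H_{ii})\neq 0$. Since $H_{ii}=u_{ii}H_0$ and $\epsilon(H_0)=k$ (because $1\in H_0$), we get $\epsilon(H_{ii})=\epsilon(u_{ii})\,k$, and hence $\epsilon(u_{ii})\neq 0$.

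Finally, for (d) I would invoke Theorem \ref{yysec2.3}(e), which identifies $J_{iq}$ with $\bigcap_{m=0}^{n-1}\ker(\pi^m)$, where $\pi^m$ is the $m$-fold convolution power of $\pi$ in $G(H^\circ)$. Since $\pi\mapsto\Xi^l_{\pi}$ is a group anti-homomorphism (Lemma \ref{yysec2.1}(a)), we have $(\Xi^l_{\pi})^m=\Xi^l_{\pi^m}$, so $\pi^m=\epsilon\circ(\Xi^l_{\pi})^m$ by the identity used in (b). For $a\in H_{ij}$ this gives $\pi^m(a)=\zeta^{im}\epsilon(a)$, which vanishes for every $m$ as soon as $\epsilon(a)=0$; hence when $i\neq j$, part (b) puts $a\in\bigcap_m\ker(\pi^m)=J_{iq}$, i.e. $H_{ij}\subseteq J_{iq}$. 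The one step demanding real care is (c): one must ensure the counit survives on the specific component $H_{ii}$, not merely on $H^l_i$, and this is exactly what part (b) arranges once a counit-nonzero element of $H^l_i$ has been located.
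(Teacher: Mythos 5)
Your proof is correct, and for parts (a), (c) and (d) it takes essentially the same route as the paper: (a) via the intertwining relation $\Xi^r_{\pi}S=S(\Xi^l_{\pi})^{-1}$ of Proposition \ref{yysec2.1}(e) together with bijectivity of $S$ and the direct-sum decomposition argument; (c) by using the counit axiom to locate an element of $H^l_i$ with nonzero counit, then invoking (b) and $H_{ii}=u_{ii}H_0$ (the paper does exactly this, taking $a=u_{ii}$ itself as the starting element); and (d) by combining (b) with the description $J_{iq}=\bigcap_{m}\ker(\pi^{*m})$ from Theorem \ref{yysec2.3}(e) --- the paper phrases this step as $G^l_{\pi}$-invariance of $H_{ij}\subseteq\ker\epsilon$, you phrase it through eigenvalues, but these are the same argument. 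The one genuine divergence is (b): the paper computes $\epsilon(x)1=\sum x_1S(x_2)\in H^l_i\,S(H^r_j)=H^l_iH^l_{-j}\subseteq H^l_{i-j}$, using part (a) and multiplicativity of the grading, and concludes from $H^l_0\cap H^l_{i-j}=0$; you instead use the identities $\epsilon\circ\Xi^l_{\pi}=\pi=\epsilon\circ\Xi^r_{\pi}$ to get $\zeta^i\epsilon(a)=\pi(a)=\zeta^j\epsilon(a)$, hence $\epsilon(a)=0$. Your version is more elementary --- it avoids the antipode entirely and makes (b) logically independent of (a) --- whereas the paper's version reuses machinery it needs anyway. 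One cosmetic slip: in (c), the fact that $\Delta(a)\in H^l_i\otimes H$ for $a\in H^l_i$ with $i\neq 0$ is Proposition \ref{yysec2.2}(a), not Proposition \ref{yysec2.1}(c) (the latter covers only the invariant component $H^l_0$); since Proposition \ref{yysec2.2}(a) is available and is what the paper itself uses, nothing is lost.
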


\begin{proof} (a) By Proposition \ref{yysec2.1}(e),
$S(\Xi^l_{\pi})^{-1}=\Xi^r_{\pi}S,$ so that
$(\Xi^r_{\pi})^{-1}S=S\Xi^l_{\pi}$. Bearing in mind that $S$ is
injective, this implies that $S(H^l_{i})=H^r_{-i}$. The second
claim follows from the first, since $H_{ij}=H^l_i\cap H^r_j$.

(b) Let $i,j \in \{0,1, \ldots ,  n-1 \}$ and let $x\in H_{ij}$.
By Proposition \ref{yysec2.2}(a) and its right-hand version,
$\Delta(x)\in H^l_i\otimes H^r_j$. Therefore, using (a),
$$\epsilon(x)=m_H\circ(\mathrm{Id}\otimes S)\circ \Delta(x)
\in H^l_iH^l_{-j}=H^l_{i-j}.$$
Thus, if $i\neq j$, $\epsilon(x)\in H^l_0\cap H^l_{i-j}=\{0\}$.

(c) Suppose that $i=j.$ By (b) and Proposition
\ref{yysec2.2}(a),
\begin{eqnarray}
\quad \quad u_{ii}=m_H \circ (\epsilon\otimes \mathrm{Id})\circ
\Delta(u_{ii}) \in m_H \circ (\epsilon\otimes \mathrm{Id})
( H^l_i\otimes H^r_i) = \epsilon(H^l_{i})H^r_i.
\label{yyE6.3.1}
\end{eqnarray}
Since $H^l_i = \oplus_j H_{ij}$ it follows from \eqref{yyE6.3.1}
and (b) that $\epsilon(H_{ii})\neq 0$. But $H_{ii}=u_{ii}H_0$,
so $\epsilon(u_{ii})\neq 0$.

(d) By (b), $H_{ij} \subseteq \mathrm{ker} \epsilon$. But
$H_{ij}$ is $G^l_{\pi}-$invariant, so it follows that
$$ H_{ij} \subseteq \cap_{i \geq 0}(\Xi^l_{\pi})^i
(\mathrm{ker} \epsilon) = J_{iq}, $$ the equality being
Theorem \ref{yysec2.3}(e).
\end{proof}

\subsection{The twistor}
\label{yysec6.4}
We shall approach the coalgebra and algebra structure of the
prime Hopf algebra $H$ of GK-dimension one by first studying
a finite dimensional Hopf factor of $H,$ as follows. By
Theorem \ref{yysec5.2} $H_0$ is a central Hopf subalgebra of
$H$. Hence there is an exact sequence of Hopf algebras
$$0\to H_0\to H\to H_{tw}\to 0,$$
where $H_{tw}$ is the Hopf algebra $H/(\ker\epsilon\cap H_0)H$. We
call $H_{tw}$ the {\it twistor} of $H$. Recall that, by
\eqref{yyE6.2.2}, $H$ is a free $H_0-$module of rank $n^2$ on the
basis $\{u_{ij}\}.$ Clearly, $\dim_k H_{tw}=n^2$, with a basis
$\{v_{ij}\}_{0\leq i,j\leq n-1}$ where $v_{ij}$ is the image in
$H_{tw}$ of the element $u_{ij}$ for every pair $(i,j)$. It will be
convenient also to denote the ideal $(\ker\epsilon\cap H_0)H$ of $H$
by $J_{tw}.$ Note that, by Proposition \ref{yysec2.3}, $J_{tw}
\subseteq J_{iq}$.

\begin{examples}
(a) Let $H=H(n,t,\xi)$ be a Taft algebra as in \S \ref{yysec3.3}, so $io(H)
= n$. By \eqref{yyE3.3.1}, $im(H)=io(H)$ if and only if $t$ is
coprime to $n$. Replacing $g$ by $g^{t}$ and $\xi$ by $\xi' :=
\xi^t$, we may assume that $t=1$. The twistor $H(n,1,\xi)_{tw}$ of this
Hopf algebra is then
$$H(n,1,\xi')_{tw}:=k\langle g,x\rangle/(x^n=0,g^n=1, gx=\xi' xg),$$
with $\Delta(g)=g\otimes g$ and $\Delta(x)=x\otimes g+1\otimes x$.

(b) Let $B = B(n,w,\xi)$ be one of the examples from \S \ref{yysec3.4}.
As noted there, $B_0 = k[x^{\pm 1}]$, so that $B_{tw} = B/(x - 1)B$
and it follows at once that $B(n,w,\xi)_{tw} \cong H(n,1,\xi)_{tw}$.
\end{examples}

\subsection{Coalgebra structure of $H_{tw}$}
\label{yysec6.5}
The key to the coalgebra structure rests in the comultiplication
rules for the basis elements $v_{ij}$. Now $v_{00} = 1$ (because
$u_{00}=1$) and, thanks to Lemma \ref{yysec6.3}, after adjustment
by suitable non-zero scalars we can assume that, in $H$,
\begin{eqnarray}
\epsilon (u_{ij}) = \delta_{ij}, \label{yyE6.5.1}
\end{eqnarray}
so that, in $H_{tw}$,
\begin{eqnarray}
\epsilon (v_{ij}) = \delta_{ij}, \label{yyE6.5.2}
\end{eqnarray}
for all $i,j = 0,1, \ldots , n-1$.

\begin{lemma}
Continue with the notations and hypotheses of $\S 6.$ The
comultiplication $\Delta$ in $H_{tw}$ satisfies the following
properties.

(a) For $i = 0, \ldots , n-1,$
$$\Delta(v_{ii})=v_{ii}\otimes v_{ii}+\sum_{s\neq i}
c^{ii}_{ss}v_{is}\otimes v_{si},$$ with $c^{ii}_{ss} = c^{ss}_{ii}$ for all $i \neq s.$

(b) For $i,j = 0, \ldots , n-1,$ with $i \neq j$,
$$\Delta(v_{ij})=v_{ii}\otimes v_{ij}+v_{ij}\otimes v_{jj}
+\sum_{s\neq i,j} c^{ij}_{ss} v_{is}\otimes v_{sj}$$
for scalars $c^{ij}_{ss}$ satisfying the relations
$$c^{ij}_{tt}c^{it}_{ss}=c^{ij}_{ss}c^{sj}_{tt}$$
for all mutually distinct $i,j,s,t$ in $\{0, \ldots, n-1\}$.

(c) For all mutually distinct $i,s,t$ in $\{0, \ldots,
n-1\},$
$$c^{tt}_{ss}=c^{it}_{ss}c^{is}_{tt}=c^{ti}_{ss}c^{si}_{tt}.$$

\end{lemma}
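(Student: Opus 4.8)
The plan is to compute $\Delta$ on the basis $\{v_{ij}\}$ of $H_{tw}$ by playing the two gradings off against the Hopf axioms. First I would record that, since $\Delta(H_{ij})\subseteq H^l_i\otimes H^r_j$ by Proposition \ref{yysec2.2}(a) and its right-hand analogue, and since $H^l_i=\oplus_s H_{is}$ and $H^r_j=\oplus_t H_{tj}$, passing to the quotient Hopf algebra $H_{tw}$ gives
$$\Delta(v_{ij})=\sum_{s,t} c^{ij}_{st}\, v_{is}\otimes v_{tj}$$
for scalars $c^{ij}_{st}$. The crucial structural step is to show that only the diagonal terms $s=t$ survive. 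For this I would invoke Proposition \ref{yysec2.1}(f), which descends to $H_{tw}$ (its winding automorphisms being induced from those of $H$, as $J_{tw}=(\ker\epsilon\cap H_0)H$ is a graded ideal fixed by $G_{\pi}$) and reads $(\mathrm{Id}\otimes \Xi^l_{\pi})\Delta=(\Xi^r_{\pi}\otimes \mathrm{Id})\Delta$. In the term $v_{is}\otimes v_{tj}$, the left factor $v_{is}$ is a $\Xi^r_{\pi}$-eigenvector of eigenvalue $\zeta^s$ and the right factor $v_{tj}$ a $\Xi^l_{\pi}$-eigenvector of eigenvalue $\zeta^t$, so applying the identity to $v_{ij}$ yields $c^{ij}_{st}(\zeta^t-\zeta^s)=0$; as $\zeta$ is a primitive $n$th root of unity, $c^{ij}_{st}=0$ whenever $s\neq t$. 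Thus $\Delta(v_{ij})=\sum_s c^{ij}_{ss}\,v_{is}\otimes v_{sj}$.

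Next I would pin down the extreme coefficients from the counit axioms. Applying $(\epsilon\otimes \mathrm{Id})$ and $(\mathrm{Id}\otimes\epsilon)$ to the diagonalized formula and using $\epsilon(v_{ab})=\delta_{ab}$ from \eqref{yyE6.5.2} gives $c^{ij}_{ii}=c^{ij}_{jj}=1$. For $i=j$ this isolates the term $v_{ii}\otimes v_{ii}$ and produces the shape asserted in (a); for $i\neq j$ it isolates $v_{ii}\otimes v_{ij}$ and $v_{ij}\otimes v_{jj}$ and produces the shape in (b).

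The remaining relations among the $c^{ij}_{ss}$ all flow from a single identity coming from coassociativity. Writing $c^{ab}_c:=c^{ab}_{cc}$ and expanding both sides of $(\Delta\otimes \mathrm{Id})\Delta(v_{ij})=(\mathrm{Id}\otimes\Delta)\Delta(v_{ij})$ in the basis $v_{ia}\otimes v_{ab}\otimes v_{bj}$, I would read off the master relation
$$c^{ij}_b\, c^{ib}_a=c^{ij}_a\, c^{aj}_b \qquad (i,j,a,b\ \text{arbitrary}).$$
Every assertion is then a specialization collapsed by the normalizations $c^{ab}_a=c^{ab}_b=1$: the relation in (b) is the case $b=t,\ a=s$; the first equality in (c) is the case $j=t,\ a=t$ (using $c^{it}_t=1$); the second equality in (c) is the case $b=i$ (using $c^{ij}_i=1$), after relabelling $i\to t,\ j\to i,\ a\to s$; and the symmetry $c^{ii}_{ss}=c^{ss}_{ii}$ in (a) is the further specialization $j=s$ of the $b=i$ case (using $c^{is}_s=1$), which gives $c^{ii}_s=c^{ss}_i$ directly. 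I would display these four substitutions explicitly.

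The main obstacle is the diagonalization: everything downstream is forced once one knows $\Delta(v_{ij})$ is supported on the terms $v_{is}\otimes v_{sj}$, and the only non-formal input is the eigenvalue argument via Proposition \ref{yysec2.1}(f), which converts an a priori $n^2$-term comultiplication into the tractable $n$-term form. After that, the relations are a careful but routine harvest of specializations of one coassociativity identity; the one point worth stressing is that the symmetry in (a) needs no third index, arising directly from the $b=i,\ j=s$ specialization, so the argument is uniform in $n$ (including $n=2$).
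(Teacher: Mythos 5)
Your proof is correct, but its key step goes by a genuinely different route than the paper's. The paper obtains the diagonal support $c^{ij}_{st}=0$ for $s\neq t$ purely from the coalgebra axioms: it first applies the counit to pin the border terms, then compares individual basis tensors in the two expansions of coassociativity applied to $v_{ii}$ and to $v_{ij}$; because those comparisons require enough distinct indices, the case $n=2$ has to be argued separately. You instead invoke Proposition \ref{yysec2.1}(f): the identity $(\mathrm{Id}\otimes \Xi^l_{\pi})\Delta=(\Xi^r_{\pi}\otimes \mathrm{Id})\Delta$ descends to $H_{tw}$ (since $J_{tw}=(\ker\epsilon\cap H_0)H$ is $G_{\pi}$-stable, $H_0$ being fixed pointwise), and comparing the eigenvalues $\zeta^t$ and $\zeta^s$ on the linearly independent tensors $v_{is}\otimes v_{tj}$ kills every off-diagonal coefficient at once, uniformly in $n$ --- so no separate $n=2$ argument is needed, a real simplification, since the paper's case split exists precisely because its index comparisons degenerate there. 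Your master relation $c^{ij}_{b}c^{ib}_{a}=c^{ij}_{a}c^{aj}_{b}$ is exactly the coefficient-wise content of what the paper extracts piecemeal in \eqref{yyE6.5.8}--\eqref{yyE6.5.10}, and your four specializations (using the counit normalizations $c^{ab}_{aa}=c^{ab}_{bb}=1$) do check out: they yield the relation in (b), both equalities in (c), and the symmetry $c^{ii}_{ss}=c^{ss}_{ii}$. As for what each approach buys: the paper's argument stays entirely within the coalgebra structure of $H_{tw}$, and its term-comparison machinery is what gets recycled in \S\ref{yysec6.7} when lifting to $H$; yours is shorter and uniform, and in fact the same eigenvalue trick run inside $H$ itself (where the coefficients $C^{ij}_{st}$ lie in $H_0\otimes H_0$, fixed pointwise by the winding automorphisms, and $H\otimes H$ is a free $H_0\otimes H_0$-module on the $u_{is}\otimes u_{tj}$) would give \eqref{yyE6.7.5} immediately and shorten Proposition \ref{yysec6.7} as well.
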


\begin{proof} First, by Proposition \ref{yysec2.2}(a) and the
fact that $u_{ij} \in H_{ij} = H^l_i \cap H^r_j$, there are elements
$c^{ij}_{st}$ of $k$ such that for all $i,j = 0, \ldots , n-1$,
\begin{eqnarray}
\Delta (v_{ij}) \quad = \quad \sum_{s,t} c^{ij}_{st} v_{is}
\otimes v_{tj}. \label{yyE6.5.3}
\end{eqnarray}
Fix $i,$ consider (\ref{yyE6.5.3}) with $j=i,$ and apply
$m_H \circ (\mathrm{Id} \otimes \epsilon)$ and
$m_H \circ (\epsilon \otimes \mathrm{Id})$ to the right hand side.
Since the outcome in both cases is $v_{ii}$ we deduce from
(\ref{yyE6.5.2}) and the linear independence over $k$ of the
$v_{ij}$ that
\begin{eqnarray}
\Delta (v_{ii}) \quad = \quad v_{ii} \otimes v_{ii} +
\sum_{s,t \neq i} c^{ii}_{st} v_{is} \otimes v_{ti}.
\label{yyE6.5.4}
\end{eqnarray}
Similarly we have, for $i \neq j,$
\begin{eqnarray}
\Delta (v_{ij}) \quad = \quad v_{ii} \otimes v_{ij} + v_{ij}
\otimes v_{jj}+ \sum_{s\neq i,t\neq j} c^{ij}_{st} v_{is} \otimes v_{tj}.
\label{yyE6.5.5}
\end{eqnarray}
Applying $(\mathrm{Id} \otimes \Delta)$ and
$(\Delta \otimes \mathrm{Id})$ to \eqref{yyE6.5.4} shows that
$$
\begin{aligned}
(\mathrm{Id}\otimes & \Delta)\circ \Delta (v_{ii})
= v_{ii} \otimes v_{ii} \otimes v_{ii} +
\sum_{l,p \neq i}c^{ii}_{lp}(v_{ii}\otimes v_{il} \otimes v_{pi})+\\
&\quad \sum_{s,t \neq i} c^{ii}_{st} v_{is} \otimes(v_{tt}\otimes v_{ti}+
v_{ti}\otimes v_{ii})
+\sum_{l\neq t,p\neq i}\sum_{s,t \neq i}c^{ti}_{lp}c^{ii}_{st}(v_{is}
\otimes v_{tl}\otimes v_{pi}),
\end{aligned}
$$
and
$$
\begin{aligned}
(\Delta \otimes & \mathrm{Id})\circ \Delta (v_{ii})
= v_{ii} \otimes v_{ii} \otimes v_{ii} +
\sum_{w,u \neq i}c^{ii}_{wu}(v_{iw}\otimes v_{ui} \otimes v_{ii})+\\
&\quad
\sum_{s,t \neq i} c^{ii}_{st} (v_{is}\otimes v_{ss}+
v_{ii}\otimes v_{is})\otimes v_{ti}
+\sum_{w\neq i,u\neq s}\sum_{s,t \neq i}c^{is}_{wu}c^{ii}_{st}(v_{iw}
\otimes v_{us}\otimes v_{ti}),
\end{aligned}
$$
respectively. Using the above expressions and canceling out the
equal terms from the left-hand and right-hand sides of the equation
$$(\mathrm{Id}\otimes \Delta)\circ \Delta (v_{ii})
=(\Delta \otimes \mathrm{Id})\circ \Delta (v_{ii}) $$
we obtain
\begin{eqnarray}
\sum_{s,t \neq i} c^{ii}_{st} v_{is} \otimes v_{tt}\otimes v_{ti}
+\sum_{l\neq t,p\neq i}\sum_{s,t \neq i}c^{ti}_{lp}c^{ii}_{st}v_{is}
\otimes v_{tl}\otimes v_{pi}= \quad
\label{yyE6.5.6}
\end{eqnarray}
$$
\quad \sum_{s,t \neq i} c^{ii}_{st} v_{is}\otimes v_{ss}\otimes v_{ti}
+\sum_{w\neq i,u\neq s}\sum_{s,t \neq i}c^{is}_{wu}c^{ii}_{st}v_{iw}
\otimes v_{us}\otimes v_{ti}.
$$
Comparing the terms with $v_{is} \otimes v_{tt}\otimes v_{ti}$,
it follows that
\begin{eqnarray}
c^{ii}_{st}=0
\label{yyE6.5.7}
\end{eqnarray}
for all $s\neq t$. Thus we have proved the first part of (a).

Substituting \eqref{yyE6.5.7} into \eqref{yyE6.5.6} and re-arranging
the indices we have
$$
\sum_{l\neq w,p\neq i}\sum_{w\neq i}c^{wi}_{lp}c^{ii}_{ww}v_{iw}
\otimes v_{wl}\otimes v_{pi}= \sum_{w\neq i,u\neq p}\sum_{p\neq
i}c^{ip}_{wu}c^{ii}_{pp}v_{iw} \otimes v_{up}\otimes v_{pi}.
$$
This implies that
$$
\begin{cases}
c^{wi}_{lp}c^{ii}_{ww}=0 & p\neq l\\
c^{ip}_{wu}c^{ii}_{pp}=0 & w\neq u\\
c^{wi}_{pp}c^{ii}_{ww}=c^{ip}_{ww}c^{ii}_{pp}.&
\end{cases}
$$

In proving (b) and (c), let's look first at the case when $n=2$.
Since $v_{00}=1$, $c^{00}_{11}=0$. If $c^{11}_{00}$ is nonzero, then
$c^{01}_{wu}=c^{10}_{wu}=0$ for $w \neq u$ by the first two of the
above three equations, and so \eqref{yyE6.5.5} becomes
$$\Delta (v_{ij}) \quad = \quad v_{ii} \otimes v_{ij} + v_{ij}
\otimes v_{jj}$$ for $i \neq j.$ By comparing $(\mathrm{Id}\otimes
\Delta)\circ \Delta (v_{01})$ with $(\Delta\otimes \mathrm{Id})\circ
\Delta (v_{01})$ we obtain that $c^{11}_{00}=c^{00}_{11}=0$, a
contradiction. Hence in this case, thanks to (a), $v_{11}$ is a
group-like element. By \eqref{yyE6.5.5}, we have
$$\Delta (v_{01}) \quad = \quad v_{00} \otimes v_{01} + v_{01}
\otimes v_{11}+ c^{01}_{10} v_{01}\otimes v_{01}.$$ By comparing
$(\mathrm{Id}\otimes \Delta)\circ \Delta (v_{01})$ with
$(\Delta\otimes \mathrm{Id})\circ \Delta (v_{01})$ again we obtain
$c^{01}_{10}=0$. Similarly, $c^{10}_{01}=0$. Thus, in view of (a)
and these observations, we have proved that $v_{11}$ is a group-like
element and that both $v_{01}$ and $v_{10}$ are skew primitive
elements. Therefore (a), (b) and (c) hold in the case $n=2.$

We return now to the case where $n>2.$ Fix $i$ and $j$ with $i\neq
j$. We proceed as in the case $i=j,$ between \eqref{yyE6.5.5} and
\eqref{yyE6.5.7}; since the arguments are similar we give fewer
details. Calculating the left-hand and the right-hand sides of the
equation
$$(\mathrm{Id}\otimes \Delta)\circ \Delta (v_{ij})=
(\Delta\otimes \mathrm{Id})\circ \Delta (v_{ij}),$$ and canceling
equal terms (where we omit the detailed expressions), we find that
\begin{eqnarray}
\quad
\label{yyE6.5.8}
\end{eqnarray}
$$\sum_{s\neq j} c^{jj}_{ss} v_{ij}\otimes v_{js}\otimes v_{sj}
+\sum_{s\neq i,t\neq j} c^{ij}_{st}v_{is}
\otimes(v_{tt}\otimes v_{tj}+\sum_{w\neq t,u\neq j} c^{tj}_{wu}
v_{tw}\otimes v_{uj})\qquad $$
$$\qquad =
\sum_{s\neq i}c^{ii}_{ss}v_{is}\otimes v_{si}\otimes v_{ij}+
\sum_{s\neq i,t\neq j}c^{ij}_{st} (v_{is}\otimes v_{ss}+
\sum_{p\neq i,l\neq s} c^{is}_{pl} v_{ip}\otimes v_{ls})\otimes v_{tj}.
$$
By comparing the terms with $v_{is}\otimes v_{tt}\otimes v_{tj}$, we
see that $c_{st}^{ij}=0$ for all $s\neq t$. Hence the first claim in
(b) follows. Substituting this into \eqref{yyE6.5.8} and deleting
the equal terms on left and right, we have
\begin{eqnarray}
\quad
\label{yyE6.5.9}
\end{eqnarray}
$$\sum_{s\neq j} c^{jj}_{ss} v_{ij}\otimes v_{js}\otimes v_{sj}
+\sum_{s\neq i,j} c^{ij}_{ss}v_{is}
\otimes(\sum_{w\neq s,j} c^{sj}_{ww}
v_{sw}\otimes v_{wj})\qquad $$
$$\qquad =
\sum_{s\neq i} c^{ii}_{ss}v_{is}\otimes v_{si}\otimes v_{ij}+
\sum_{s\neq i, j}c^{ij}_{ss} (\sum_{p\neq i,s} c^{is}_{pp}
v_{ip}\otimes v_{ps}\otimes v_{sj}).
$$
Comparing the term $v_{ij}\otimes v_{ji}\otimes v_{ij}$, it follows
that
$$c^{ii}_{jj}=c^{jj}_{ii},$$
proving the rest of (a).

Comparing terms $v_{ij}\otimes v_{js}\otimes v_{sj}$ for $s\neq i,j$
yields
$$c^{jj}_{ss}=c^{ij}_{ss}c^{is}_{jj}.$$
Comparing terms $v_{is}\otimes v_{si}\otimes v_{ij}$ for $s\neq i,j$
we deduce
$$c^{ij}_{ss}c^{sj}_{ii}=c^{ii}_{ss}.$$
Combining the last two equations yields
\begin{eqnarray}
c^{tt}_{ss}=c^{it}_{ss}c^{is}_{tt}=c^{ti}_{ss}c^{si}_{tt}
\label{yyE6.5.10}
\end{eqnarray}
for all distinct $s,t,i$. That is, (c) is proved. Finally, by
comparing terms $v_{is}\otimes v_{st}\otimes v_{tj}$ for distinct
$i,j,s,t$, we obtain
$$c^{ij}_{ss}c^{sj}_{tt}=c^{ij}_{tt}c^{it}_{ss},$$
which is the second part of (b). This completes the proof.
\end{proof}

\subsection{Algebra and coalgebra structure of $H_{tw}$}
\label{yysec6.6}

\begin{proposition}
Keep all the notation introduced so far in $\S \ref{yysec6}$.
The following relations hold in $H_{tw}$.
\begin{enumerate}
\item
Let $g=v_{11}$. Then $g^n=1$ and for $i = 0,1, \ldots , n-1$,
$v_{ii}=g^i$.
\item
For all $i,j = 0,1, \ldots , n-1,$ with $i \neq j$, $v_{ij}^n=0$.
\item
For all $i,j = 0,1,\ldots , n-1$, $gv_{ij}=\xi^{i-j}v_{ij}g$.
In particular, $gv_{10}=\xi v_{10}g$ and $gv_{01}=\xi^{-1}v_{01}g$.
\item
The element $g$ is  group-like in $H_{tw}$. As a consequence
$$c^{tt}_{ss}=0\quad
{\text{and}}\quad
c^{it}_{ss}c^{is}_{tt}=c^{ti}_{ss}c^{si}_{tt}=0$$
for all distinct $s,t,i$.
\item
We may extend the expressions for $v_{ii}$ in (a) by defining
$v_{ij}:=g^i v_{0(j-i)}$ for all $i,j$, where $j-i$ is to be
interpreted modulo $n$ when $i>j.$ Once this is done,
$c^{ij}_{ss}=c^{(i-t)(j-t)}_{(s-t)(s-t)}$ for all $i,j,s,t$ with
$i,j$ and $s$ distinct.
\item
There exists $t$ coprime to $n$, $1 \leq t \leq n-1,$ such that
$k\langle v_{0t} \rangle$ contains $v_{0i}$ for all $i = 0,1,
\ldots , n-1$.
\item
As an algebra, $H_{tw}$ is isomorphic to the twistor
$H(n,1,\xi^t)_{tw}$ of the Taft algebra, when we take the
generators to be $y=v_{0t}$ and $g = v_{tt}.$
\end{enumerate}
\end{proposition}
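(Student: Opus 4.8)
The plan is to dispatch the algebra relations (a)--(c) first, then to prove the coalgebra statement (d), which is the heart of the matter, and finally to assemble (e)--(g).

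For (a)--(c) the one device I would use throughout is that the image of $H_0$ in $H_{tw}$ is just $k$, realised through $\epsilon$: every element of $H_0 = H_{00}$ maps to its counit times $1$. Since $u_{11}\in H_{11}$, the grading gives $u_{11}^{\,i}\in H_{ii}$, so $u_{11}^{\,i}=u_{ii}h_i$ with $h_i\in H_0$; applying $\epsilon$ and using the normalisation \eqref{yyE6.5.1} forces $\epsilon(h_i)=1$, whence $g^i=v_{ii}$ in $H_{tw}$, and for $i=n$ one has $u_{11}^{\,n}\in H_{00}$, so $g^n=\epsilon(u_{11}^{\,n})=1$; this is (a). Likewise, for $i\neq j$ we have $u_{ij}^{\,n}\in H_{00}$ and $\epsilon(u_{ij})=0$ by Lemma \ref{yysec6.3}, so $v_{ij}^{\,n}=\epsilon(u_{ij})^n=0$, giving (b). Part (c) is immediate from the commutation relation $u_{ij}u_{i'j'}=\xi^{i'j-ij'}u_{i'j'}u_{ij}$ of Lemma \ref{yysec6.2}, passed to the quotient $H_{tw}$.

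Part (d) is the main obstacle. Writing $\Delta(g)=g\otimes g+w$ with $w=\sum_{s\neq 1}c^{11}_{ss}v_{1s}\otimes v_{s1}$ from Lemma \ref{yysec6.5}, I would first record, using (c), that $g\otimes g$ commutes with $w$, so that setting $\omega:=(g^{-1}\otimes g^{-1})w$ gives $\Delta(g)=(g\otimes g)(1+\omega)$ with $g\otimes g$ and $\omega$ commuting. Since $g^n=1$ yields $(g\otimes g)^n=1\otimes 1$, applying $\Delta$ to $g^n=1$ and using commutativity gives $(1+\omega)^n=1\otimes 1$. The crux is then that $\omega$ is nilpotent: each homogeneous piece of $\omega$ has the form $v_{0r}\otimes v_{r0}$ with $r\neq 0$, and in $\omega^n$ every second tensor factor is a product $v_{r_10}\cdots v_{r_n0}$ with all $r_i\neq 0$; among the $n$ partial sums of the $r_i$ either one is $\equiv 0\bmod n$ or two coincide, so some consecutive sub-run has left-degree $\equiv 0$, and that sub-product lies in the image of $H_{00}$, equalling $\epsilon$ of a product of the $u_{r_i0}$, which vanishes by Lemma \ref{yysec6.3}. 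Hence $\omega^n=0$. A nilpotent $1+\omega$ that is an $n$-th root of unity, with $n$ a unit in $k$, must equal $1$, so $\omega=0$ and $g$ is group-like. The stated consequences then follow: $v_{tt}=g^t$ is also group-like, forcing $c^{tt}_{ss}=0$ by Lemma \ref{yysec6.5}(a), and the product relations of (d) drop out of Lemma \ref{yysec6.5}(c).

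Finally, (e)--(g). For (e) I would redefine $v_{ij}:=g^iv_{0,j-i}$ (a legitimate rescaling within the line $kv_{ij}$) and use group-likeness of $g$ to factor $\Delta(v_{ij})=(g^i\otimes g^i)\Delta(v_{0,j-i})$; comparison with Lemma \ref{yysec6.5} yields the shift-invariance $c^{ij}_{ss}=c^{0,j-i}_{s-i,s-i}$, which is exactly the asserted relation. This reduces all structure constants to a single array and reduces the algebra to its first column $E=\bigoplus_i kv_{0i}$, a $\mathbb{Z}_n$-graded local algebra whose augmentation ideal is nilpotent by the same wrap-around argument as in (d). Step (f) is the remaining combinatorial difficulty, and a secondary obstacle: one must locate $t$ coprime to $n$ for which $v_{0t}$ has nilpotency index exactly $n$, i.e.\ $v_{0t}^{\,n-1}\neq 0$, so that its powers sweep out all of $E$; this is precisely where the reduced coalgebra relations must be invoked to rule out premature vanishing $v_{0t}^{\,m}=0$. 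Granting (f), part (g) becomes a dimension count: with $g=v_{tt}$ and $y=v_{0t}$ the relations $g^n=1$, $y^n=0$ and the $q$-commutation between $g$ and $y$ furnished by (c) exhibit $H_{tw}$ as a homomorphic image of the Taft twistor $H(n,1,\xi^t)_{tw}$, and equality of dimensions ($n^2$ on both sides, via the basis \eqref{yyE6.2.2}) forces the map to be an isomorphism.
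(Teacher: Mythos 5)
Your parts (a)--(c) are correct and match the paper's arguments. Your part (d) is also correct: the overall strategy (apply $\Delta$ to $g^n=1$, use (c) to commute $g\otimes g$ past the error term, and kill the error term because ``unipotent $n$-th root of unity with $n$ invertible'' forces triviality) is exactly the paper's, and your pigeonhole proof that $\omega$ is nilpotent -- some consecutive block of the degrees $r_i$ sums to $0 \bmod n$, so a sub-product lands in $kv_{00}$ and equals a product of counits $\epsilon(u_{r_i0})=0$ by Lemma \ref{yysec6.3} -- is a valid alternative to the paper's shorter observation that the summands of $\alpha=\sum_{s\neq 1}c^{11}_{ss}v_{1s}\otimes v_{s1}$ are nilpotent by (b) and skew-commute by Lemma \ref{yysec6.2}. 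Part (e) and, conditionally, part (g) likewise agree with the paper.

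The genuine gap is (f), and you have in effect conceded it: you call it ``the remaining combinatorial difficulty'', say the coalgebra relations ``must be invoked to rule out premature vanishing $v_{0t}^m=0$'', and then continue ``granting (f)''. No argument is supplied, and the direction you point in is not how the statement is actually settled. Nothing you have established excludes the degenerate possibility that all products $v_{0i}v_{0j}$ with $i,j\neq 0$ vanish (i.e. that the radical of $E=\oplus_i kv_{0i}$ squares to zero); in that case no single $v_{0t}$ generates, (f) fails, and with it (g), since the presentation of $H_{tw}$ as an image of the Taft twistor on the two generators $g, y$ requires precisely that the powers of one element $v_{0t}$ sweep out all the $v_{0i}$. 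The paper's proof of (f) does not use the coproduct constants at all; it is ring-theoretic and rests on an input your proposal never mentions, namely Hypotheses \ref{yysec2.5}(c): $H^l_0$ is a Dedekind domain. Hence $B:=(H^l_0+J_{tw})/J_{tw}=\oplus_i kv_{0i}$ is a finite-dimensional factor of a Dedekind domain, so a principal ideal ring; it is local with radical $J=\oplus_{i\neq 0}kv_{0i}$, so $J$ is generated by the unique homogeneous element $v_{0t}\in J\setminus J^2$, induction gives $J^i=k(v_{0t})^i+J^{i+1}$, whence $k\langle v_{0t}\rangle=B$, and the grading forces $\gcd(t,n)=1$. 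Without this step (or a genuine substitute for it), the proof of the proposition is incomplete at its pivotal point.
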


\begin{proof}(a) Since $H_{tw}$ is graded, $g^n\in kv_{00}=k$. By
\eqref{yyE6.5.2}, $\epsilon(g^n)=1^n=1$. Hence $g^n=1$. In
particular, $g$ is invertible. Both $g^i$ and $v_{ii}$ are nonzero
elements in the $1$-dimensional space $kv_{ii}$. Then
\eqref{yyE6.5.2} shows that they are equal.

(b) Let $i \neq j.$ Since $H_{tw}$ is graded, $v_{ij}^n\in
kv_{00}=k$. The assertion follows from \eqref{yyE6.5.2}.

(c) This follows at once from Lemma \ref{yysec6.2}.

(d) By Lemma \ref{yysec6.5}(a)
$$\Delta(v_{11})=v_{11}\otimes v_{11}+\sum_{s\neq i}c^{11}_{ss}
v_{1s}\otimes v_{s1};$$ that is,
$$\Delta(g)=g\otimes g+\alpha$$
where $\alpha:=\sum_{s\neq i}c^{11}_{ss}v_{1s}\otimes v_{s1}$
commutes with $g\otimes g$ by (c). Since $g^n=1$,
$\Delta(g)^n=1\otimes 1$. This implies that
$$\sum_{i=0}^n {n \choose i}(g\otimes g)^{n-i}\alpha^{i}=1\otimes 1$$
which is equivalent to
$$\alpha
\big\{n(g^{n-1}\otimes g^{n-1})+\sum_{i=2}^n {n \choose i}
(g\otimes g)^{n-i}\alpha^{i-1}\big\}=0.$$
Since $\beta:=\sum_{i=2}^n {n \choose i}(g\otimes
g)^{n-i}\alpha^{i-1}$ is nilpotent by (b) and Lemma \ref{yysec6.2},
$n(g^{n-1}\otimes g^{n-1})+\beta$ is invertible. Hence $\alpha=0$,
and $g$ is group-like as claimed. Since $v_{ii}=v_{11}^i$ for $i=1,
\ldots , n-1,$ these elements too are group-like. Hence the
remaining claims in (d) follow from Lemma \ref{yysec6.5}(c).

(e) Since the proposed elements $v_{ij}$ are non-zero and lie in
the correct subspace, the first part is clear. The second part
follows by using Lemma \ref{yysec6.5}(b) and the fact that
$\Delta$ is an algebra homomorphism to expand the equation
$\Delta(v_{ij}) = \Delta ( g^t v_{(i-t)(j-t)})$.

(f) Set $B:=(H^l_0+J_{tw})/J_{tw}$. Then $B= \oplus_{0 \leq i\leq
n-1} kv_{0i}$. By (b) and Lemma \ref{yysec6.2}, $B$ is a local
ring with Jacobson radical
$$J =\oplus_{0<i \leq n-1}kv_{0i} = ( H^l_0 \cap J_{iq})+J_{tw}/
 J_{tw}.$$
However, $B$ is a (finite dimensional) factor of $H^l_0$, which is a
Dedekind domain by Hypotheses \ref{yysec2.5}(c). Thus $B$ is a
principal ideal ring, and so its radical $J$ is generated by the
(unique) element $v_{0t} \in J\setminus J^2$. By induction,
$J^i=k(v_{0t})^i +J^{i+1}$ provided $J^i\neq 0$. Thus the
$k$-subalgebra of $B$ generated by $v_{0t}$ includes all $v_{0i}$
and therefore equals $B.$ That is, $B=k[y]/(y^n)$ where $y:=v_{0t}$.
It is clear from the grading that $t$ must be coprime to $n$.

(g) By (e) and (f), $H_{tw}$ is generated by $g$ and $y$. Our
calculations so far show that, as an algebra, $H_{tw}$ is a factor
of the algebra
$$k\langle g,y\rangle/\langle y^n=0,g^n=1, gy=\xi^{-t}yg
\rangle.$$ But this latter algebra is isomorphic to
$H(n,1,\xi^t)_{tw}$, and so comparing dimensions we see that in fact
the two algebras are isomorphic.
\end{proof}

Retain all the notation and hypotheses of \S \ref{yysec6}.
We can now completely describe the twistor of $H$:

\begin{theorem} Suppose $(H,\pi)$ satisfies Hypotheses \ref{yysec2.5} and
\eqref{yyE6.1.1}. There exists an $n$th primitive root of 1, $\xi$,
such that the twistor $H_{tw}$ of $H$ is
isomorphic as a Hopf algebra to the twistor $H(n,1,\xi)_{tw}$ of the
Taft algebra $H(n,1,\xi)$, as described in Examples \ref{yysec6.4}.
The basis elements $\{v_{ij} : 0 \leq i,j \leq n-1 \}$ can be chosen
to be
$$v_{ij} := \begin{cases}
g^iy^{j-i} & i \leq j,\\ g^iy^{n+j-i}, &i>j.
\end{cases}$$
\end{theorem}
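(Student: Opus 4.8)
The plan is to build on Proposition~\ref{yysec6.6}, which has already done the bulk of the work: it shows that $g:=v_{11}$ is group-like with $g^{n}=1$ and $v_{ii}=g^{i}$, that $H_{tw}$ is generated as an algebra by $g$ together with a radical generator $y$ subject to $g^{n}=1$, $y^{n}=0$ and a relation $gy=\xi^{\pm1}yg$ for a primitive $n$th root, and (via Example~\ref{yysec6.4}) that this algebra is $H(n,1,\xi)_{tw}$. Since $\Delta$ is an algebra homomorphism and $\{g,y\}$ generate $H_{tw}$, and since the counit is already fixed by \eqref{yyE6.5.2} while the antipode of a bialgebra is determined by its multiplication and comultiplication, the only remaining task is to pin down $\Delta(y)$.

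First I would normalise the labelling. Using the freedom in the choice of the primitive root $\zeta$ fixing the bigrading in \S\ref{yysec6.2} (equivalently, the reduction to ``$t=1$'' performed for the Taft algebras in Example~\ref{yysec6.4}(a)), I may assume the radical generator of Proposition~\ref{yysec6.6}(f) sits in bidegree $(0,1)$, so that $y=v_{01}$ and, after rescaling, $v_{0j}=y^{j}$ for $0\le j\le n-1$. By Lemma~\ref{yysec6.5}(b),
\[
\Delta(y)=1\otimes y+y\otimes g+\sum_{s\neq 0,1}c^{01}_{ss}\,v_{0s}\otimes v_{s1},
\]
so the crux is to prove that $y$ is genuinely skew-primitive, i.e. that every off-diagonal coefficient $c^{01}_{ss}$ (for $2\le s\le n-1$) vanishes.

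To kill these coefficients I would combine Proposition~\ref{yysec6.6}(d) with a degree count. Taking $i=0,\ t=1$ in the product relation $c^{it}_{ss}c^{is}_{tt}=0$ of Proposition~\ref{yysec6.6}(d) gives $c^{01}_{ss}\,c^{0s}_{11}=0$ for each $s\ge 2$, so it suffices to show $c^{0s}_{11}\neq 0$, where $c^{0s}_{11}$ is the coefficient of $v_{01}\otimes v_{1s}$ in $\Delta(y^{s})=\Delta(y)^{s}$. The key observation is that every error term of $\Delta(y)$ has left tensor-leg $v_{0s}=y^{s}$ with $s\ge 2$; since left legs multiply inside the commutative ring $\overline{H^{l}_{0}}=k[y]/(y^{n})$ and $y^{n}=0$, no product of $s\le n$ such legs can have total $y$-degree $1$ unless it uses exactly one $y\otimes g$ factor and otherwise $1\otimes y$ factors. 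Hence the error terms cannot contribute to the $v_{01}\otimes v_{1s}$ coefficient, which — using the $\xi$-commutation of Lemma~\ref{yysec6.2} to reorder the right legs — is therefore the quantum integer $c^{0s}_{11}=1+\xi+\cdots+\xi^{s-1}$. As $\xi$ is a primitive $n$th root and $2\le s\le n-1$, this is nonzero, forcing $c^{01}_{ss}=0$ and hence $\Delta(y)=1\otimes y+y\otimes g$.

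With $\Delta(g)=g\otimes g$ and the skew-primitivity of $y$ in hand, the algebra map determined by $g\mapsto g,\ y\mapsto x$ is a Hopf-algebra isomorphism $H(n,1,\xi)_{tw}\xrightarrow{\ \sim\ }H_{tw}$, for the primitive root $\xi$ produced by the normalisation. The $\xi$-binomial theorem then yields $\Delta(y^{m})=\sum_{s}\binom{m}{s}_{\xi}y^{s}\otimes g^{s}y^{m-s}$, and setting $v_{ij}:=g^{i}y^{j-i}$ for $i\le j$ and $v_{ij}:=g^{i}y^{n+j-i}$ for $i>j$ produces nonzero elements lying in the correct bigraded components $H_{tw,ij}$ (as $g^{i}$ has bidegree $(i,i)$ and $y^{j-i}$ bidegree $(0,j-i)$), hence a $k$-basis on which $\Delta$ takes exactly the Taft form. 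I expect the main obstacle to be precisely the vanishing of the off-diagonal coefficients $c^{01}_{ss}$: coassociativity alone (which produced the relations of Lemma~\ref{yysec6.5}) does not force skew-primitivity, so one must genuinely exploit the algebra fact that $\overline{H^{l}_{0}}$ is the local ring $k[y]/(y^{n})$ together with the nonvanishing of the quantum integers $1+\xi+\cdots+\xi^{s-1}$ for $s<n$.
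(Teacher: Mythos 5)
Your proposal is correct and follows essentially the same route as the paper's proof: normalise so that $y=v_{01}$ (the paper does this by replacing $\pi$ with $\pi^{*t'}$, you by relabelling the bigrading), then kill the off-diagonal coefficients by combining the relation $c^{it}_{ss}c^{is}_{tt}=0$ of Proposition \ref{yysec6.6}(d) with the nonvanishing of quantum binomial coefficients read off from $\Delta(y^{j})$ via a $y$-degree count. The only (harmless) difference is one of economy: the paper establishes $c^{0j}_{ii}={j \choose i}_{\xi^t}\neq 0$ for all $1\leq i\leq j-1$ and deduces $c^{0i}_{jj}=0$ for all $0<i<j$, whereas you isolate exactly the $i=1$ instances $c^{0s}_{11}=1+\xi+\cdots+\xi^{s-1}\neq 0$ that are needed to force $c^{01}_{ss}=0$.
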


\begin{proof} Let $t$ be the integer determined by
Proposition \ref{yysec6.6}(f) and let $t'$ be an integer
such that $t'$ is the multiplicative inverse of $t$ in $C_n$.
We replace $\pi$ by its $t'$th power $\pi^{* t'}$
in $G(H^{\circ})$. It is clear that Hypotheses
\ref{yysec2.5} and \eqref{yyE6.1.1} hold for this new $\pi$. Doing
so allows us to assume that $y=v_{01}$. Adjusting $g$ accordingly,
we continue to take $g=v_{11}$. By Proposition \ref{yysec6.6}(g),
$H_{tw}\cong H(n,1,\xi)_{tw}$ as algebras. Since $\epsilon$ is
already determined by \eqref{yyE6.5.2}, and the antipode, being
the inverse of the identity map under convolution, is determined
by the bialgebra structure, it remains only to compare the
coalgebra structures. By Proposition \ref{yysec6.6}(d)
$\Delta(g)=g\otimes g,$ and Lemma \ref{yysec6.5}(b) shows that
\begin{eqnarray}
\label{yyE6.6.1}\quad \quad \qquad \Delta(y)=1\otimes y+y\otimes
g+\sum_{s\neq 0,1}c^{01}_{ss} v_{0s}\otimes v_{s1} =1\otimes
y+y\otimes g+(y^2\otimes y^2)f
\end{eqnarray}
where $f\in H_{tw}\otimes H_{tw}$. The result will follow if we can
show that \begin{eqnarray} c^{01}_{ss}=0 \textit{ for all } s>1.
\label{yyEextra}\end{eqnarray} Using \eqref{yyE6.6.1} and the facts
that $y^n=0$ and $yg=\xi^t gy$, we calculate that, for all $1<j \leq
n-1$,
\begin{eqnarray}
\label{yyE6.6.2} \quad \Delta(y^{j})=1\otimes y^{j}+y^{j}\otimes
g^{j}+ (\sum_{i=1}^{j-1} {j \choose i}_{\xi^t}y^i\otimes g^i
y^{j-i}) +h_j,\end{eqnarray}
 where the total $y$-degree of $h_j$ is greater than
$j+1$. Since $\xi^t$ is a primitive $n$th root of unity, the
coefficient
 \begin{eqnarray}\label{yyE6.6.3}c^{0j}_{ii}=
{j \choose i}_{\xi^t}\neq 0 \end{eqnarray}
for all $1 \leq i \leq j-1.$ It follows from the second equation in
Proposition \ref{yysec6.6}(d) that
$$c^{0i}_{jj}=0$$
for all $0<i<j$, proving \eqref{yyEextra}. It is now clear that
the final adjustments to the choice of basis $\{v_{ij}\}$ can be
made as proposed.
\end{proof}

\subsection{Lifting the coalgebra structure}
\label{yysec6.7} In the previous two subsections we have shown
that $v_{11}$ is group-like and $v_{01}$ is
$(v_{11},1)$-primitive. The goal of this section is to show that
$u_{11}$ is group-like and $u_{01}$ is $(u_{11},1)$-primitive.

Recall from \ref{yyE6.2extra} that $Z(H)=H_0$ is either $k[x]$ or
$k[x^{\pm 1}]$. We shall use $z$ to denote $x$ in the first case and
$x-1$ in the second case, so that $$J_{tw} \cap H_0 = \ker \epsilon
\cap H_0 = \langle z \rangle.$$  The following is clear:

\begin{lemma}
Suppose $(H,\pi)$ satisfies Hypotheses \ref{yysec2.5} and
\eqref{yyE6.1.1}.
\begin{enumerate}
\item
$H/z^nH$ is finite dimensional for all $n$, and
$\bigcap_{n\geq 0} z^n H=\{0\}$.
\item
$H/zH=H_{tw}$.
\item
$H_{tw}/Jac(H_{tw})=H_{iq}=k{\mathbb Z}_n$ where $Jac(-)$
is the Jacobson radical.
\end{enumerate}
\end{lemma}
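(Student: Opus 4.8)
The plan is to treat (b) and (a) as routine consequences of the fact that $H$ is a finite free module over its central subalgebra $H_0$, and to concentrate the real work on (c). For (b), recall from \S\ref{yysec6.4} that $J_{tw}=(\ker\epsilon\cap H_0)H$; since $z$ generates $\ker\epsilon\cap H_0$ as an ideal of $H_0$ and $z$ is central, $J_{tw}=zH_0\cdot H=zH$, so $H/zH=H/J_{tw}=H_{tw}$ by definition. For (a) the key input is \eqref{yyE6.2.2}, which presents $H=\bigoplus_{i,j}H_0u_{ij}$ as a free $H_0$-module of finite rank $N=n^2$. Centrality of $z$ gives $z^mH=\bigoplus_{i,j}z^mH_0\,u_{ij}$ and hence $H/z^mH\cong(H_0/z^mH_0)^{N}$ for every $m\ge1$; as $H_0/z^mH_0$ is $k[x]/(x^m)$ or $k[x^{\pm1}]/((x-1)^m)$ — in both cases an $m$-dimensional local $k$-algebra, the second because $x$ is a unit modulo $(x-1)^m$ — each $H/z^mH$ is finite-dimensional. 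The same decomposition yields $\bigcap_{m}z^mH=\bigoplus_{i,j}\big(\bigcap_m z^mH_0\big)u_{ij}$, and $\bigcap_m z^mH_0=0$ since $H_0$ is a domain and $z$ a nonzero nonunit (Krull's intersection theorem); thus $\bigcap_m z^mH=0$.

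For (c) I would first compute $\mathrm{Jac}(H_{tw})$ from the explicit presentation. By the description of the twistor obtained in \S\ref{yysec6.6} (cf. Examples \ref{yysec6.4}), $H_{tw}$ is generated by a group-like $g$ of order $n$ and an element $y$ with $y^n=0$ and $gy=\lambda yg$ for some primitive $n$th root of unity $\lambda$. Then $y$ is normal, since $gy=\lambda yg$ forces $yH_{tw}=H_{tw}y$, so the two-sided ideal $I:=yH_{tw}$ satisfies $I^n=y^nH_{tw}=0$ and is therefore contained in $\mathrm{Jac}(H_{tw})$. On the other hand $H_{tw}/I\cong k\langle g\rangle\cong k\mathbb{Z}_n$, which is semisimple because $n$ is invertible in $k$; hence $\mathrm{Jac}(H_{tw})\subseteq I$, and we conclude $\mathrm{Jac}(H_{tw})=yH_{tw}$, of dimension $n^2-n$, with $H_{tw}/\mathrm{Jac}(H_{tw})\cong k\mathbb{Z}_n$ of dimension $n$ (spanned by the images of $g^i$).

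It then remains to identify this semisimple quotient with the integral quotient $H_{iq}$. By Theorem \ref{yysec2.3}(d), $H_{iq}=H/J_{iq}\cong(kC_n)^{\circ}\cong k\mathbb{Z}_n$, of dimension $n$; and $J_{tw}\subseteq J_{iq}$ by Proposition \ref{yysec2.3} (as recorded in \S\ref{yysec6.4}), so $H_{iq}=H_{tw}/(J_{iq}/J_{tw})$ is a quotient of $H_{tw}$. Being semisimple, $H_{iq}$ kills $\mathrm{Jac}(H_{tw})$, i.e. $\mathrm{Jac}(H_{tw})\subseteq J_{iq}/J_{tw}$; comparing the two $n$-dimensional quotients $H_{tw}/\mathrm{Jac}(H_{tw})$ and $H_{iq}$ of $H_{tw}$, the former surjecting onto the latter, forces equality, so $\mathrm{Jac}(H_{tw})=J_{iq}/J_{tw}$ and $H_{tw}/\mathrm{Jac}(H_{tw})=H_{iq}=k\mathbb{Z}_n$. (As a check, Lemma \ref{yysec6.3}(d) gives $H_{ij}\subseteq J_{iq}$ for $i\neq j$, so the images $v_{ij}$ with $i\neq j$ already span the $(n^2-n)$-dimensional ideal $J_{iq}/J_{tw}=yH_{tw}$.) The only genuine content is part (c), and the main obstacle there is showing that $J_{iq}/J_{tw}$ is \emph{exactly} the radical rather than merely contained in it; this is secured by the nilpotency of $yH_{tw}$ together with the dimension count, which is why pinning down the explicit presentation of $H_{tw}$ beforehand is essential.
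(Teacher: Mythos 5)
Your proof is correct: the identification $J_{tw}=zH$ for (b), the free rank-$n^2$ decomposition \eqref{yyE6.2.2} together with Krull's intersection theorem for (a), and for (c) the computation $\mathrm{Jac}(H_{tw})=yH_{tw}$ from the presentation in Proposition \ref{yysec6.6}(g) combined with Theorem \ref{yysec2.3}(d) and the inclusion $J_{tw}\subseteq J_{iq}$ from Proposition \ref{yysec2.3}, are all valid and rely only on results the paper has already established at this point. The paper itself gives no proof (the lemma is prefaced by ``The following is clear''), and your argument is precisely the natural assembly of the paper's own prior results that the authors evidently had in mind, so there is no divergence of approach to report.
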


By Proposition \ref{yysec2.2}(a) and the fact that $H_0u_{ij} =
H_{ij} = H^l_i \cap H^r_j,$ there are elements $C^{ij}_{st}$ of
$H_0\otimes H_0$ such that for all $i,j = 0, \ldots , n-1,$
\begin{eqnarray}
\Delta (u_{ij}) \quad = \quad \sum_{s,t} C^{ij}_{st} u_{is}
\otimes u_{tj}.
\label{yyE6.7.1}
\end{eqnarray}
Clearly, $(\epsilon\otimes \epsilon)(C^{ij}_{st})=c^{ij}_{st}$. For
$C^{ij}_{st}=\sum c_i \otimes d_i\in H_0\otimes H_0$, let's write
$[C^{ij}_{st}]_{13}$ to denote the element $\sum_i c_i\otimes
1\otimes d_i\in H_0\otimes H_0\otimes H_0$.

\begin{proposition}
Suppose $(H,\pi)$ satisfies Hypotheses \ref{yysec2.5} and
\eqref{yyE6.1.1}.
\begin{enumerate}
\item The element $u_{11}$ of $H$ is group-like. That is,
$C^{11}_{11}=1$ and $C^{11}_{st}=0$ for all $(s,t)\neq (1,1)$.
Furthermore, we may define $u_{ii}:=u_{11}^i$ for all $i=1,\cdots,
n-1$, so that $u_{ii}$ is group-like for all $i=1,\ldots, n-1$.
\item After a possible adjustment, $u_{01}$ is a
$(u_{11},1)$-primitive; that is, $$\Delta(u_{01}) =1\otimes
u_{01}+u_{01}\otimes u_{11}.$$
\end{enumerate}
\end{proposition}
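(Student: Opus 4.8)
The plan is to read both assertions as \emph{lifting} statements: by the analysis of the twistor in \S\ref{yysec6.6} the desired identities already hold in $H_{tw}=H/zH$, and the task is to remove error terms all of whose coefficients lie in $zH$. The engine is the separation $\bigcap_{m\ge 0}z^mH=\{0\}$ from the Lemma of \S\ref{yysec6.7}, combined with the Krull intersection theorem in the Noetherian domain $R:=H_0\otimes H_0$: writing $\mathfrak{m}:=\ker(\epsilon\otimes\epsilon\colon R\to k)=(z\otimes 1,\,1\otimes z)$ we have $\bigcap_m\mathfrak{m}^m=0$. By Proposition \ref{yysec2.2}(a), $\Delta(u_{ij})\in H^l_i\otimes H^r_j$, so by freeness there are unique $C^{ij}_{st}\in R$ with $\Delta(u_{ij})=\sum_{s,t}C^{ij}_{st}\,u_{is}\otimes u_{tj}$ and $(\epsilon\otimes\epsilon)(C^{ij}_{st})=c^{ij}_{st}$. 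First I would note that the ``middle index matching'' of Lemma \ref{yysec6.5} lifts verbatim to $H$: the comparison of $(\Delta\otimes\mathrm{Id})\Delta(u_{ij})$ with $(\mathrm{Id}\otimes\Delta)\Delta(u_{ij})$ used there needs only the shape $\Delta(u_{ij})\in H^l_i\otimes H^r_j$ and the $R$-linear independence of the basis monomials in $H^{\otimes 3}$, both available here. Hence $C^{ij}_{st}=0$ unless $s=t$, and in particular $\Delta(u_{11})=u_{11}\otimes u_{11}+A$ with $A=\sum_{s\ne 1}C^{11}_{ss}\,u_{1s}\otimes u_{s1}$, every coefficient of $A$ lying in $\mathfrak{m}$.

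For (a) the key observation is that $A$ commutes with $g\otimes g$ (with $g:=u_{11}$): by Lemma \ref{yysec6.2}, $u_{1s}u_{11}=\xi^{s-1}u_{11}u_{1s}$ and $u_{s1}u_{11}=\xi^{1-s}u_{11}u_{s1}$, and the two $\xi$-powers cancel exactly on the matched terms $u_{1s}\otimes u_{s1}$ (the central coefficients commute too). Since $g^n\in H_{00}=H_0$, expanding $\Delta(g^n)=\Delta(g)^n=(g\otimes g+A)^n$ by the binomial theorem gives
\begin{equation*}
A\cdot B=\Gamma:=\Delta(g^n)-g^n\otimes g^n\in R, \qquad B:=U+B', \quad U:=n(g\otimes g)^{n-1}, \quad B':=\textstyle\sum_{i\ge 2}\binom{n}{i}(g\otimes g)^{n-i}A^{\,i-1}.
\end{equation*}
Now I bootstrap on the $\mathfrak{m}$-order. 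Suppose every coefficient of $A$ and of $\Gamma$ lies in $\mathfrak{m}^m$ (true for $m=1$ by the twistor, where $g^n\equiv 1$). Then $A^{i-1}$ has coefficients in $\mathfrak{m}^{(i-1)m}$, so $B'$ has coefficients in $\mathfrak{m}^m$ and $AB'$ in $\mathfrak{m}^{2m}\subseteq\mathfrak{m}^{m+1}$; thus $AU=\Gamma-AB'$. Here $U$ is a unit of $Q\otimes Q$, while under the $\widehat{G_\pi}\times\widehat{G_\pi}$-grading of $Q\otimes Q$ the element $AU$ is supported in the bidegrees $((0,s-1),(s-1,0))$ with $s\ne 1$ — all distinct from the bidegree $0$ carrying $\Gamma$. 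Comparing the bidegree-$0$ components forces $\Gamma\in\mathfrak{m}^{m+1}$, and comparing the remaining components (using that $U$ is a unit) forces each $C^{11}_{ss}\in\mathfrak{m}^{m+1}$. By induction $A$ and $\Gamma$ have coefficients in $\bigcap_m\mathfrak{m}^m=0$, so $A=0$ and $\Gamma=0$: that is, $u_{11}$ is group-like and $g^n$ is group-like in $H_0$. Setting $u_{ii}:=u_{11}^i$ then gives (a).

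For (b) I would run the same scheme off the $n$-th power relation, now exploiting that the two leading terms $\xi$-commute. Write $w:=u_{01}$ and $\Delta(w)=1\otimes w+w\otimes g+R$, with the coefficients of $R$ in $\mathfrak{m}$ (by the twistor, using \eqref{yyEextra}). By Lemma \ref{yysec6.2} the elements $X:=w\otimes g$ and $Y:=1\otimes w$ satisfy $YX=\xi XY$, and since $\xi$ is a primitive $n$th root of $1$ the $q$-binomial theorem \cite[I.6.1]{BG2} kills the middle coefficients, so $(X+Y)^n=X^n+Y^n=w^n\otimes g^n+1\otimes w^n\in R$. As $w^n\in H_0$, comparing $\Delta(w^n)=\Delta(w)^n$ with this leading part expresses $\Gamma':=\Delta(w^n)-(w^n\otimes g^n+1\otimes w^n)\in R$ as the sum of all $R$-containing terms of $(X+Y+R)^n$; to leading order this is the linearized $n$th-power operator $L(R)=\sum_{j=0}^{n-1}(X+Y)^jR(X+Y)^{n-1-j}$. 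Again separating bidegrees — the off-diagonal components $u_{0s}\otimes u_{s1}$ ($s\ne 0,1$) are isolated from the bidegree-$0$ error $\Gamma'$, and $L$ is invertible on them by primitivity of $\xi$ — advances the $\mathfrak{m}$-order of $R$ by one at each step. The two leading coefficients $C^{01}_{00},C^{01}_{11}$ are normalized to $1$ using the counit identities $(\epsilon\otimes\mathrm{Id})(C^{01}_{00})=(\mathrm{Id}\otimes\epsilon)(C^{01}_{11})=1$ together with the gauge freedom $w\mapsto\lambda w$, $\lambda\in H_0^\times$ (this is the ``possible adjustment''); the bootstrap then drives the remainder into $\bigcap_m\mathfrak{m}^m=0$, giving $\Delta(u_{01})=1\otimes u_{01}+u_{01}\otimes u_{11}$.

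The main obstacle is the descent itself. The bare coassociativity (cocycle) equation for the error is \emph{not} enough to pin it down: its linearization has a kernel on the ``primitive'' symbol $z\otimes 1-1\otimes z$, so the argument genuinely needs the $n$th-power relations to supply the missing rigidity. The delicate technical points are therefore (i) checking that the bidegree separation truly isolates the off-diagonal coefficients and that the governing leading operator — $U$ in part (a), the linearized $n$th power $L$ in part (b) — is invertible, and (ii) handling the normalization of the two leading coefficients in (b), where the single-parameter gauge $w\mapsto\lambda w$ must be combined with coassociativity to fix both. All of this must be carried out uniformly in the two cases $H_0=k[x]$ and $H_0=k[x^{\pm 1}]$, where $\Delta(z)$ differs but $z$ is primitive to leading order in either case.
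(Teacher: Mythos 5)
Your off-diagonal bootstrap in part (a) is essentially the paper's own argument in different dress: the paper runs the same $n$-th power/grade-separation computation, measuring size by powers of the Jacobson radical of $H/(z^m)\otimes H/(z^m)$ rather than by $\mathfrak{m}$-adic order plus Krull intersection. But there is a genuine gap at the very first step of (a): you write $\Delta(u_{11})=u_{11}\otimes u_{11}+A$, which silently asserts that the diagonal coefficient $C^{11}_{11}$ equals $1\otimes 1$. Nothing available at that stage gives this. Middle-index matching and the counit axioms yield only $\Delta(u_{11})=C^{11}_{11}(u_{11}\otimes u_{11})+A$ with $(\epsilon\otimes 1)C^{11}_{11}=(1\otimes \epsilon)C^{11}_{11}=1$, and elements such as $1\otimes 1+z\otimes z$ satisfy these constraints; moreover $C^{11}_{11}$ is invariant under the only available gauge $u_{11}\mapsto u_{11}c$ with $c\in H_0^{\times}$, $\epsilon(c)=1$, so this is a fact to be proved, not a normalization. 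Your bootstrap cannot produce it: if you put the deviation $D:=C^{11}_{11}-1\otimes 1$ into the error term, then $D(g\otimes g)U$ lands in bidegree $0$, exactly where the unknown $\Gamma=\Delta(g^n)-g^n\otimes g^n$ sits, so the bidegree-$0$ equation merely couples $D$ to $\Gamma$ (about which you know nothing a priori, precisely because you do not yet know $g^n$ is group-like), and the induction stalls at order one. The paper closes this gap with an idea absent from your proposal: apply $m_H\circ(S\otimes \mathrm{Id})$ to $\Delta(u_{11})=C^{11}_{11}(u_{11}\otimes u_{11})$ and use $\epsilon(u_{11})=1$ to conclude that $u_{11}$ is invertible in $H$; hence $C^{11}_{11}$ is a unit of $H_0\otimes H_0$, and since the units of $k[x]\otimes k[x]$ are scalars and those of $k[x^{\pm 1}]\otimes k[x^{\pm 1}]$ are monomials $ax^i\otimes x^j$, the counit identities force $C^{11}_{11}=1\otimes 1$.

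The same omission recurs in (b), and your proposed repair does not work as stated: the counit identities together with the gauge $w\mapsto \lambda w$ cannot normalize $C^{01}_{00}$ and $C^{01}_{11}$, because in the primitive case $H_0=k[x]$ the units of $H_0$ are scalars and the gauge acts trivially on these coefficients. What is needed (and what the paper does via \eqref{yyE6.7.6}--\eqref{yyE6.7.8}) is to extract from coassociativity and a degree count that $C^{01}_{00}=h\otimes 1$ with $h$ group-like in $H_0$, and then invoke the classification of group-likes in $k[x]$, resp.\ $k[x^{\pm 1}]$ (only in the latter case is the gauge used, to remove a power of $x$). Two further points are asserted but never verified in your plan: the grade-by-grade division by $U$ requires the relevant cofactors to lie outside $\mathfrak{m}$ (fixable, e.g.\ multiply the equation by $g\otimes g$ so the leading operator becomes $n(C^{11}_{11})^{n-1}(g^n\otimes g^n)$, whose $(\epsilon\otimes\epsilon)$-value is $n\neq 0$); and the invertibility "by primitivity of $\xi$" of your linearized power operator $L$ in (b) is left unproved, whereas the paper avoids $L$ altogether, killing $C^{01}_{jj}$ for $j>1$ by the algebraic identity \eqref{yyE6.7.2} with well-chosen indices, the domain property of $H_0^{\otimes 3}$, and the non-vanishing $c^{ij}_{ss}\neq 0$ for $i<s<j$ inherited from the twistor. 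Relatedly, your claim that the twistor's middle-index matching "lifts verbatim" hides real work: the coefficients are no longer scalars but elements of $H_0\otimes H_0$ acted on by $\Delta$, and the paper needs the counit relations \eqref{yyE6.7.3} and the domain property to derive it.
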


\begin{proof} Let $C^{ij}_{st}$ be defined as in
\eqref{yyE6.7.1}. We will repeat some of the computations in
(\ref{yysec6.5}) and (\ref{yysec6.6}) to prove the following
preliminary steps towards (a) and (b):
\begin{eqnarray}
[(\Delta \otimes 1) C^{ij}_{st}][C^{is}_{\alpha\beta}\otimes 1]= [(1
\otimes \Delta) C^{ij}_{\alpha\beta}] [1\otimes C^{\beta j}_{st}]
\label{yyE6.7.2}
\end{eqnarray}
for all $i,j,s,t,\alpha,\beta;$
\begin{eqnarray}
\label{yyE6.7.3} (\epsilon\otimes 1) C^{ij}_{ii}=1\quad
{\text{and}}\quad (1\otimes \epsilon) C^{ij}_{jj}=1
\end{eqnarray}
for all $i$ and $j;$
\begin{eqnarray}
\label{yyE6.7.4}
(\epsilon\otimes 1) C^{ij}_{st}=0\quad
{\text{and}}\quad
(1\otimes \epsilon) C^{ij}_{st}=0
\end{eqnarray}
for all $i,j,s,t$ with $s\neq t;$
\begin{eqnarray}
\label{yyE6.7.5}
C^{ij}_{st}=0
\end{eqnarray}
for all $i,j,s,t$ with $s\neq t;$ \begin{eqnarray}\label{yyE6.7.5a}
C^{11}_{ss}=0 \textit{ for all } s\neq 1, \textit{ so }
\Delta(u_{11}) =C^{11}_{11} u_{11}\otimes u_{11}.
\end{eqnarray}

\emph{Proof of} \eqref{yyE6.7.2}: Applying $(\Delta\otimes 1)$ and
$(1\otimes \Delta)$ to \eqref{yyE6.7.1} we have
$$(\Delta\otimes 1)\Delta(u_{ij})=
\sum[(\Delta\otimes 1) C^{ij}_{st}][C^{is}_{\alpha \beta}\otimes 1]
u_{i\alpha}\otimes u_{\beta s}\otimes u_{tj};$$
$$(1\otimes \Delta)\Delta(u_{ij})=
\sum[(1\otimes \Delta)C^{ij}_{\alpha\beta}][1\otimes C^{\beta
j}_{st}] u_{i\alpha}\otimes u_{\beta s}\otimes u_{tj}.$$ Now
\eqref{yyE6.7.2} follows from the above equations and the fact that
$(\Delta\otimes 1)\circ \Delta=(1\otimes \Delta)\circ \Delta$.

\emph{Proof of} \eqref{yyE6.7.3}: By definition and Lemma
\ref{yysec6.5}(a,b), $(\epsilon\otimes \epsilon)
(C^{ij}_{ii})=c^{ij}_{ii}=1$. Hence $w:= (\epsilon\otimes
1)(C^{ij}_{ii})$ is nonzero. Applying $\epsilon\otimes \epsilon
\otimes 1$ to \eqref{yyE6.7.2} with $\alpha=\beta=s=t=i$ and using
the fact that $(\epsilon\otimes 1)\circ \Delta(c) =1\otimes c$, we
obtain
$$[(1\otimes \epsilon\otimes 1)(1\otimes C^{ij}_{ii})]
[((\epsilon\otimes \epsilon)C^{ii}_{ii})\otimes 1] =[(\epsilon
\otimes 1\otimes 1)[C^{ij}_{ii}]_{13}] [(\epsilon \otimes \epsilon
\otimes 1)(1\otimes C^{ij}_{ii})].$$ Since $(\epsilon\otimes
\epsilon)C^{ii}_{ii}=c^{ii}_{ii}=1$, the above equation reduces to
$$1\otimes w=(1\otimes w)(1\otimes w),$$
and since $H_0\otimes H_0\otimes H_0$ is a domain, $w=1$. The second
equation $(1\otimes \epsilon) C^{ij}_{jj}=1$ follows by symmetry.

\emph{Proof of} \eqref{yyE6.7.4}: Applying $1\otimes \epsilon\otimes
\epsilon$ to \eqref{yyE6.7.2} with $\alpha=\beta=s\neq t$, we find
that
$$[(1\otimes 1\otimes \epsilon)([C^{ij}_{st}]_{13})]
[(1\otimes \epsilon)(C^{is}_{ss})\otimes 1] =[(1\otimes
\epsilon\otimes 1)(C^{ij}_{ss}\otimes 1)] [1\otimes (\epsilon\otimes
\epsilon)(C^{sj}_{st})].
$$
By \eqref{yyE6.7.3} $(1\otimes \epsilon)(C^{is}_{ss})=1$ and by
Lemma \ref{yysec6.5}(a,b), $(\epsilon\otimes \epsilon)(C^{sj}_{st})
=c^{sj}_{st}=0,$ since $s\neq t$. Therefore $(1\otimes 1\otimes
\epsilon)([C^{ij}_{st}]_{13})=0;$ that is, $(1\otimes
\epsilon)(C^{ij}_{st})=0$ for all $i,j,s,t$ with $s\neq t$. This is
the second equation in \eqref{yyE6.7.4}. The first one can be
deduced similarly.

\emph{Proof of} \eqref{yyE6.7.5}: Applying $1\otimes \epsilon\otimes
1$ to \eqref{yyE6.7.2} with $\alpha\neq \beta=s=t$, yields
$$[C^{ij}_{ss}]_{13}[(1\otimes \epsilon)(C^{is}_{\alpha s})\otimes 1]
=[C^{ij}_{\alpha s}]_{13}(1\otimes (\epsilon\otimes
1)(C^{sj}_{ss})).
$$
Since $(1\otimes \epsilon)(C^{is}_{\alpha s})=0$ by
\eqref{yyE6.7.4} and $(\epsilon\otimes 1)(C^{sj}_{ss})=1$ by
\eqref{yyE6.7.3}, we have $[C^{ij}_{\alpha s}]_{13}=0$. This is
equivalent to $C^{ij}_{\alpha s}=0$.

\emph{Proof of} \eqref{yyE6.7.5a}: By \eqref{yyE6.7.5}
$\Delta(u_{11})=\sum_s C^{11}_{ss} u_{1s}\otimes u_{s1}$. Let
$p_s=C^{11}_{ss} u_{1s}\otimes u_{s1}$ and $p=\sum_s p_s$. Thus
each $p_s$ is homogeneous in $H_{1s}\otimes H_{s1}$ and so $p_s$
commutes with $p_{s'}$ for all $s,s'$ by Lemma \ref{yysec6.2}.
Suppose that $p_s\neq 0$ for some $s\neq 1$, and fix $m \geq 1$
such that the image of $p_s$ is non-zero in $B:= H/(z^m)\otimes
H/(z^m),$  using Lemma \ref{yysec6.7}(a). To keep the notation
within reasonable bounds we shall use the same symbol in what
follows for the images of elements in $B$, as for the elements
themselves in $H \otimes H.$ Since the image of $C^{11}_{11}$ in
$H/\langle z \rangle \otimes H/\langle z \rangle = H_{tw} \otimes
H_{tw}$ is $1 \otimes 1$ by Proposition (\ref{yysec6.6})(d),
$C^{11}_{11}$ is invertible in $B$. On the other hand, for all $s
\neq 1,$ the image of $C^{11}_{ss}$ in $H_{tw} \otimes H_{tw}$ is
0 by Proposition \ref{yysec6.6}(d), so the image in $B$ of
$C^{11}_{ss}$ is in the Jacobson radical $J:=Jac(B)$ for all $s
> 1.$ Let $w$ be the maximum integer such that $C^{11}_{ss}\in
J^w$ for all $s\neq 1$. Fix $i,$ $ 1 \leq i \leq n-1.$ Because
$u_{11}^n\in H_0$, $p^n=\Delta(u_{11}^n)\in H_0\otimes H_0$.
Considering $H\otimes H$ as a $({\mathbb Z}_n)^{\times 4}$-graded
algebra, the degree $(0,i,i,0)$ part of $p^n$ is
$$n p_1^{n-1} p_{i+1}+f_i(p_s)$$
where $f_i$ is a polynomial in $\{p_s : 0 \leq s \leq n-1\}$ whose
total degree in $p_0, p_2, \ldots,p_{n-1}$ is at least two. But
$p^n\in H_0\otimes H_0$, and so $p^n$ is homogeneous of degree
$(0,0,0,0).$ It follows that $n p_1^{n-1} p_{i+1}+f_i(p_s)=0$ for
all $i = 1, \ldots , n-1.$ Therefore
$$np_1^{n-1}p_{i+1}=-f_i\in J^{2w};$$
that is,
$$p_{i+1}=(np_1^{n-1})^{-1}(-f_i)\in J^{2w},$$
for all $i\neq 0$. This is a contradiction to the definition of $w$.
Therefore $p_s=0$ for all $s\neq 0$. In other words, $C^{11}_{ss}=0$
for all $s\neq 1$.

Now we are going to prove (a) and (b).

(a) Recall that $\epsilon(u_{11})=1$ and $H_0$ is the center of $H$.
By \eqref{yyE6.7.5} and \eqref{yyE6.7.5a},
$$1=\epsilon(u_{11})=m_H\circ(S\otimes 1)\circ \Delta(u_{11})=
m_H[(S\otimes 1)(C^{11}_{11})(S(u_{11})\otimes u_{11})]
=fS(u_{11})u_{11}$$ where $f=m_H\circ(S\otimes 1)(C^{11}_{11})$.
Hence $u_{11}$ is invertible.

Since $u_{11}$ is invertible, so is $\Delta(u_{11})$, and thus
$C^{11}_{11}$ is invertible. Since $C^{11}_{11}\in H_0\otimes H_0$,
and $H_0=k[x]$ or $H_0=k[x^{\pm 1}]$, there is a non-zero scalar $a$
such that $C^{11}_{11}=a$ in the first case or $a x^i\otimes x^j$ in
the second case. It follows from
$$(\epsilon\otimes 1)\Delta(u_{11})=u_{11}=
(1\otimes \epsilon)\Delta(u_{11})$$ that $a=1$ in both cases and
$i=j=0$ in the second case. Thus $C^{11}_{11}=1$. The rest of (a) is
clear.

(b) We claim first that \begin{eqnarray} \label{primit}
\Delta(u_{01})=C^{01}_{00}u_{00}\otimes
u_{01}+C^{01}_{11}u_{01}\otimes u_{11}. \end{eqnarray} If $n=2$,
\eqref{primit} follows from \eqref{yyE6.7.5}. Suppose now that
$n>2,$ and let $i\neq j$. Set $s=t\neq j$ and $\alpha=\beta=j$ in
\eqref{yyE6.7.2} to see that
$$[(\Delta \otimes 1) C^{ij}_{ss}][C^{is}_{jj}\otimes 1]=
[(1 \otimes \Delta) C^{ij}_{jj}] [1\otimes C^{jj}_{ss}],$$ and
this is zero because $C^{jj}_{ss}=0$ by (a). Since $H_0\otimes
H_0\otimes H_0$ is a domain, either $C^{ij}_{ss}=0$ or
$C^{is}_{jj}=0$. However, by Proposition \ref{yysec6.6}(e) and
\eqref{yyE6.6.3}, $C^{ij}_{ss}\neq 0$ for all $i<s<j$. Therefore
$C^{is}_{jj}=0$ for all $i<s<j$. In particular, $C^{01}_{jj}=0$
for all $j>1$, so that \eqref{primit} is proved.

Next we claim that
\begin{eqnarray}\label{finalpush} C^{01}_{00}=C^{01}_{11}=
1\otimes 1.\end{eqnarray}
Comparing $(\Delta\otimes 1)\Delta(u_{01})$ with $(1\otimes
\Delta)\Delta(u_{01})$ we obtain the following three equations:
\begin{align}
(\Delta\otimes 1)C^{01}_{00}&=[(1\otimes \Delta)C^{01}_{00}]
[1\otimes C^{01}_{00}];\label{yyE6.7.6}\\
[(\Delta\otimes 1)C^{01}_{11}][C^{01}_{00}\otimes 1]&= [(1\otimes
\Delta)C^{01}_{00}][1\otimes C^{01}_{11}];
\label{yyE6.7.7}\\
[(\Delta\otimes 1)C^{01}_{11}][C^{01}_{11}\otimes 1]&= (1\otimes
\Delta)C^{01}_{11}.\label{yyE6.7.8}
\end{align}

Recall once again the dichotomy of \eqref{yyE6.2.2}: $H_0$ is a
central subHopf algebra, either $H_0 = k[x]$ or $H_0 = k[x^{\pm
1}].$
 For $f\in H_0\otimes H_0$ or $f\in
H_0\otimes H_0\otimes H_0$, write $d(f)$ and $\partial(f)$ for the
degree of $f$ with respect to $x\otimes 1$ and $1\otimes x$
respectively in the first case, and with respect to $x\otimes
1\otimes 1$ and $1\otimes 1 \otimes  x$ respectively in the second
case. Write $C^{01}_{00}=\sum_{i,j}a_{ij} x^i\otimes x^j$ where
$a_{00}=c^{01}_{11}=1$.

\noindent \emph{Case 1}: \emph{Primitive case}: Suppose that
$H_0=k[x],$ with $x$ primitive.  Now \eqref{yyE6.7.6} becomes
$$\sum a_{ij}(x\otimes 1+1\otimes x)^i\otimes x^j=
[\sum a_{ij} x^i\otimes (x\otimes 1+1\otimes x)^j] [\sum a_{ij}
1\otimes x^i\otimes x^j].$$ It follows that
$\partial(C^{01}_{00})=\partial(C^{01}_{00})+\partial(C^{01}_{00}),$
so that $\partial(C^{01}_{00})=0$ and $C^{01}_{00}=\sum_i
a_ix^i\otimes 1$. In this case, writing $h:=\sum_i a_i x^i$,
\eqref{yyE6.7.6} simplifies to
$$\Delta(h)=(h\otimes 1)(1\otimes h)=h\otimes h.$$
But the only group-like element in $k[x]$ is 1, so that
$C^{01}_{00}=1$. A similar argument shows that $C^{01}_{11}=1$, and
so \eqref{finalpush} is true in the case $H_0=k[x]$.

\noindent \emph{Case 2}: \emph{Group-like case}: Suppose that
$H_0=k[x^{\pm 1}],$ with $x$ group-like. Since we can switch $x$
and $x^{-1}$ we may assume that $\partial(C^{01}_{00})\geq 0$. In
this case \eqref{yyE6.7.6} becomes
$$\sum a_{ij}(x\otimes x)^i\otimes x^j=
[\sum a_{ij} x^i\otimes (x\otimes x)^j] [\sum a_{ij} 1\otimes
x^i\otimes x^j].$$ Again $\partial(f)=2\partial(f)$ and hence
$\partial(f)=0$. This says that there is no positive power of $x$
in the second component. Replacing $x$ with $x^{-1}$ we deduce
that $f=\sum_i a_i x^i\otimes 1$ and, as in the primitive case,
$h:=\sum_i a_i x^i$ is group-like. Therefore $h=x^i$ and
$C^{01}_{00}=x^i\otimes 1$. Replacing $u_{01}$ by $u_{01}x^{-i}$
we have $C^{01}_{00}=1$. A similar argument involving
\eqref{yyE6.7.8} implies that $C^{01}_{11}=x^s\otimes 1$. Then
\eqref{yyE6.7.7} forces $s=0$ and the proof of \eqref{finalpush}
is complete in this case also.
\end{proof}

\begin{theorem}
Suppose $(H,\pi)$ satisfies Hypotheses \ref{yysec2.5} and
\eqref{yyE6.1.1}. Retain all the notations introduced so far in \S 6
for $H$ ; so, in particular, $H$ is a free $H_0-$module with basis
of $G^l_{\pi} \times G^r_{\pi}-$eigenvectors $\{u_{ij} : 0 \leq i,j
\leq n-1 \},$ and after Proposition \ref{yysec6.7} $u_{00} = 1, \;
u_{ii} = u_{11}^i$ is group-like for $i = 1,\dots , n-1,$ and
$u_{01}$ is $(u_{11},1)-$primitive. Then, after multiplying as
necessary by units of $H_0,$ we can assume that $u_{ij}=u_{11}^i
u_{01}^{j-i}$ for all $i<j,$ and $u_{ij} = u_{11}^iu_{01}^{n+j-i}$
for $i >j.$ In particular, therefore,
$$\Delta(u_{0j})=\sum_{s} {j \choose s}_{\xi^t}
u_{0s}\otimes u_{sj}$$ and $C^{ij}_{ss}=c^{ij}_{ss}1\otimes 1$ for
all $i,j,s$.
\end{theorem}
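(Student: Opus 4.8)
The plan is to show that the proposed elements $w_{ij}:=u_{11}^{i}u_{01}^{\langle j-i\rangle}$ (where $\langle j-i\rangle$ denotes the residue of $j-i$ in $\{0,\dots,n-1\}$) form an $H_0$-basis of $H$. By \eqref{yyE6.2.2}, $H$ is free of rank $n^2$ over the central ring $H_0$ with the homogeneous basis $\{u_{ij}\}$, and each $w_{ij}$ lies in $H_{ij}=H_0u_{ij}$; so it suffices to check that each $w_{ij}$ generates the rank-one module $H_{ij}$, after which a surjective $H_0$-endomorphism of the free module $H$ is automatically bijective. Since $u_{11}$ is group-like (Proposition \ref{yysec6.7}), hence a unit of $H_{11}$, and $H_0$ is central, a degree count gives $u_{11}^{-i}H_{ij}\subseteq H_{0,\langle j-i\rangle}$ and $u_{11}^{i}H_{0,\langle j-i\rangle}\subseteq H_{ij}$, so $H_{ij}=u_{11}^{i}H_{0,\langle j-i\rangle}$. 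Thus $w_{ij}$ generates $H_{ij}$ if and only if $u_{01}^{m}$ generates $H_{0,m}$ for $0\le m\le n-1$, i.e.\ if and only if $H^l_0=H_0[u_{01}]$.

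First I would reduce this to a statement about one central element. Writing $c:=u_{01}^{n}\in H_{0,0}=H_0$, I would compute $\Delta(c)$. By Proposition \ref{yysec6.7}, $\Delta(u_{01})=1\otimes u_{01}+u_{01}\otimes u_{11}$, and the commutation relation of Lemma \ref{yysec6.2} shows that the two summands $\xi$-commute in $H\otimes H$; as $\xi$ is a primitive $n$th root of unity, the $q$-binomial theorem \cite[I.6.1]{BG2} annihilates all the middle terms, so that $\Delta(c)=1\otimes c+c\otimes u_{11}^{n}$. Hence $c$ is a $(u_{11}^{n},1)$-skew-primitive element of the central Hopf subalgebra $H_0$ (Theorem \ref{yysec5.2}), and $u_{11}^{n}$ is group-like in $H_0$. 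Moreover $c\neq 0$, since $u_{01}$ is a nonzero homogeneous, hence regular, element. Now the dichotomy \eqref{yyE6.2extra} applies. In the \emph{primitive} case $H_0=k[x]$ the only group-like is $1$, so $u_{11}^{n}=1$, $c$ is primitive, and $c=\alpha x$ with $\alpha\neq 0$; then $H_0[u_{01}]=k[u_{01}]$ is a polynomial ring, already regular. In the \emph{group-like} case $H_0=k[x^{\pm 1}]$ one has $u_{11}^{n}=x^{m}$ and the $(x^{m},1)$-skew-primitives are exactly $k(x^{m}-1)$, so $c=\beta(x^{m}-1)$ with $\beta\neq 0$ and $m\neq 0$.

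To finish I would argue that $H_0[u_{01}]\cong H_0[T]/(T^{n}-c)$ is integrally closed, so that it coincides with the Dedekind domain $H^l_0$ (Theorem \ref{yysec2.5}(f)) of which it is a full-rank subring with the same fraction field. Since $1,u_{01},\dots,u_{01}^{n-1}$ lie in distinct graded components they are $H_0$-independent, identifying $H_0[u_{01}]$ with $H_0[T]/(T^n-c)$; and because $n$ is a unit, the Jacobian criterion makes this ring regular precisely when $c$ is square-free. In the primitive case $c=\alpha x$ is square-free automatically. The hard part is the group-like case: here square-freeness of $\beta(x^{m}-1)$ amounts to $\operatorname{char}k\nmid m$, for otherwise $x^{m}-1$ is a proper $p$th power and $H_0[u_{01}]$ is singular. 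This is exactly the obstruction analysed in Theorem \ref{yysec3.4}(e), where singularity of such a subring is shown to force infinite global dimension; invoking the regularity of $H$ therefore rules out $\operatorname{char}k\mid m$ and yields $c$ square-free. In all cases $H_0[u_{01}]$ is normal, hence $H^l_0=H_0[u_{01}]$, establishing $H_{0,m}=H_0u_{01}^{m}$ and, by the reduction above, the displayed formulas $u_{ij}=u_{11}^{i}u_{01}^{\langle j-i\rangle}$.

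The two ``in particular'' assertions are then formal. Expanding $\Delta(u_{0j})=\Delta(u_{01})^{j}$ by the $q$-binomial theorem gives $\sum_{s}{j\choose s}_{\xi^{-1}}u_{01}^{s}\otimes u_{01}^{\,j-s}u_{11}^{s}$; reordering the right-hand tensorand into $u_{sj}=u_{11}^{s}u_{01}^{\,j-s}$ via Lemma \ref{yysec6.2} and simplifying the resulting power of $\xi$ against the Gaussian binomial (a routine reindexing) produces the stated $\sum_{s}{j\choose s}_{\xi^{t}}u_{0s}\otimes u_{sj}$. Finally, with $u_{ij}=u_{11}^{i}u_{01}^{\langle j-i\rangle}$ the coproduct $\Delta(u_{ij})=(u_{11}\otimes u_{11})^{i}(1\otimes u_{01}+u_{01}\otimes u_{11})^{\langle j-i\rangle}$ expands with purely scalar coefficients, so the structure constants $C^{ij}_{ss}\in H_0\otimes H_0$ of \eqref{yyE6.7.1} are forced to equal $c^{ij}_{ss}\,1\otimes 1$. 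The single genuine obstacle throughout is the square-freeness of $c$ in the group-like case, which is where the regularity hypothesis on $H$ is used in an essential way.
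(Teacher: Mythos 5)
Your route is genuinely different from the paper's. The paper works entirely with the coproduct: writing $u_{01}^j=u_{0j}f_j$ with $f_j\in H_0$, it compares the $\mathbb{Z}_n^4$-graded pieces of the two expansions of $\Delta(u_{01}^j)$, extracts the identity $1\otimes f_j=C^{0j}_{00}\Delta(f_j)$, and forces $f_j$ to be a unit by a degree count in $1\otimes x$; the case $i\geq 1$ then follows from a one-line rank argument. You instead reduce (correctly, via invertibility of the group-like $u_{11}$ and the grading) to the ring-theoretic statement $H^l_0=H_0[u_{01}]$, identify $H_0[u_{01}]\cong H_0[T]/(T^n-c)$ with $c=u_{01}^n$, show by the $q$-binomial theorem that $c$ is a skew-primitive of the central Hopf subalgebra $H_0$, and conclude normality of $H_0[u_{01}]$ from square-freeness of $c$, whence equality with the Dedekind domain $H^l_0$. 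In characteristic zero this argument is complete and correct: primitives of $k[x]$ are $kx$, the $(x^m,1)$-skew-primitives of $k[x^{\pm 1}]$ are $k(1-x^m)$ with $m\neq 0$ forced by $c\neq 0$, and both $\alpha x$ and $\beta(1-x^m)$ are square-free, so regularity of $H$ is never actually needed. This is an attractive alternative, and it also explains conceptually why $y^n=x$ and $y^n=1-x^w$ appear in Propositions \ref{yysec6.8} and \ref{yysec6.9}.

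However, the theorem is stated under Hypotheses \ref{yysec2.5}, which allow $\mathrm{char}\,k=p>0$ with $p\nmid n$, and the paper's graded argument is characteristic-free in that range, while yours has a genuine gap there. First, in the primitive case the primitives of $k[x]$ in characteristic $p$ are spanned by the $x^{p^i}$, so $c$ need not be $\alpha x$ and need not be square-free. Second, and more seriously, your proposed repair in the group-like case --- ruling out $p\mid m$ by the restriction-of-resolutions argument of Theorem \ref{yysec3.4}(e) --- is circular. That argument requires $H$ to be projective (in \ref{yysec3.4}(e), free) as a module over the singular commutative subring in question. Here the subring is $R:=H_0[u_{01}]$, and since $H=\bigoplus_{i=0}^{n-1}u_{11}^iH^l_0$ one has $H\cong (H^l_0)^{\oplus n}$ as $R$-modules; but $H^l_0$ is precisely the normalization of $R$ inside $Q(R)$, and a module-finite birational extension of a noetherian domain is flat (equivalently projective) over it only when it equals the ring itself. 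So $H$ is projective over $R$ if and only if $R=H^l_0$, which is exactly what you are trying to prove: when $R$ is singular the restriction argument cannot be launched, and no contradiction with $\gldim H=1$ is obtained. To cover positive characteristic you would need a different mechanism (the paper's coproduct-degree argument is one), or you must restrict your proof to characteristic zero and say so.
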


\begin{proof} Define $\bar{u}_{ij}:=u_{11}^iu_{01}^{j-i}$
for $i\leq j$ and $\bar{u}_{ij}:=u_{11}^iu_{01}^{n+j-i}$ for $i>j$.
Thus, by Proposition \ref{yysec6.7},
\begin{eqnarray}\label{del1}\Delta(\bar{u}_{0j})=\sum_{s} {j \choose s}_{\xi^t}
\bar{u}_{0s}\otimes \bar{u}_{sj}.\end{eqnarray}

We show first that there are units $f_j \in H_0$ such that
$\bar{u}_{0j}=u_{0j}f_j$ for $j=0, \ldots, n-1.$ Since
$\bar{u}_{0j}\in H_{0j}$ there are certainly $f_j\in H_0$ such
that these equations hold. Then
\begin{eqnarray}\label{del2}\Delta(\bar{u}_{0j})=
\Delta(u_{0j}f_j)=(\sum_{s} C^{0j}_{ss}u_{0s}\otimes u_{sj})
\Delta(f_j).\end{eqnarray} As in the proof of \eqref{yyE6.7.5a},
these expressions are $\mathbb{Z}_n^4-$graded. Equating the degree
$(0,0,0,j)$ parts of the two expressions \eqref{del1} and
\eqref{del2} for $\Delta(\bar{u}_{0j}),$
\begin{eqnarray}\label{del3}1\otimes f_j=C^{0j}_{00}\Delta(f_j) \in H_0
\otimes H_0.\end{eqnarray} Comparing degrees $\partial(-)$ in $1
\otimes x$ in \eqref{del3} gives
$$ \partial(1 \otimes f_j) = \partial(C^{0j}_{00}) +
\partial(1 \otimes f_j), $$
since $\partial (1 \otimes f_j) = \partial(\Delta (f_j))$ whether
$H_0$ is $k[x]$ or $k[x^{\pm 1}].$ Therefore $C^{0j}_{00} = h
\otimes 1$ for some $h \in H_0$, which is non-zero since
$(\epsilon \otimes \epsilon)(C^{0j}_{00}) = 1 \otimes 1$ by
\eqref{yyE6.6.2}. Substituting this in \eqref{del3} and applying
$1 \otimes \epsilon,$ we deduce that $\epsilon (f_j) \neq 0$ and
then that $f_j$ is a unit of $H_0$.

Thus $f_j$ is either $\lambda_j$ or $\lambda_j x^{l_j},$ where
$\lambda_j \in k \setminus \{0\}$ and $l_j \in \mathbb{Z},$
depending on whether $H_0$ is $k[x]$ or $k[x^{\pm 1}].$ For each
$j = 0 , \ldots , n-1,$ consider the image of the equation $$
\bar{u}_{0j} \; = \; u_{01}^j \; = \; u_{0j}f_j $$ in $H_{tw},$
where it yields
$$ v_{01}^j \; = \; v_{0j}\varepsilon (f_j) \; = \; v_{01}^j
\varepsilon(f_j), $$ in view of the final statement in Theorem
(\ref{yysec6.6}). Hence
$$ \varepsilon (f_j) \; = \; 1, $$
so that $\lambda_j = 1.$ Second, in the case where $H_0 = k[x^{\pm
1}]$, we can now replace $u_{0j} = u_{01}^j x^{-l_j}$ by
$u_{0j}x^{l_j} = u_{01}^j$, as required.

Now let $j \geq i \geq 1,$ and consider $u_{11}u_{01}^{j-i};$
since $H_{ij} = u_{ij}H_0,$ we can write
$$ u_{11}^i u_{01}^{j-i} \; = \; u_{ij}f_{ij} $$ for some $f_{ij}
\in H_0.$ Thus $$ H_{0(j-i)} \; = \; u_{01}^{j-i}H_0 \; = \;
u_{11}^{-i}u_{ij}f_{ij}H_0, $$ so that $f_{ij}$ must be a unit of
$H_0$. Therefore $u_{11}^i u_{01}^{j-i}H_0 = H_{ij}, $ so we can
replace our initial choice of $u_{ij}$ with $$u_{ij} \; := \;
u_{11}^i u_{01}^{j-i}, $$ as claimed. A similar calculation
applies when $i > j.$
\end{proof}

\subsection{Primitive case}
\label{yysec6.8}
\begin{proposition} Assume $(H,\pi)$ satisfies Hypotheses \ref{yysec2.5} and \eqref{yyE6.1.1}. Suppose that $H$ is primitive -
that is, the Hopf subalgebra $H_0 = Z(H)$ is a polynomial algebra
$k[x].$ Then $H$ is isomorphic as a Hopf algebra to the Taft algebra
$H(n,t,\xi)$ of Example \ref{yysec3.3} with $\gcd(t,n)=1$.
\end{proposition}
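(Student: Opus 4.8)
The plan is to exhibit the two distinguished generators of $H$ and verify that they satisfy exactly the defining relations of a Taft algebra. Set $g:=u_{11}$ and $y:=u_{01}$. By Proposition \ref{yysec6.7}, $g$ is group-like and $y$ is $(g,1)$-primitive, so $\Delta(y)=y\otimes g+1\otimes y$, while by \eqref{yyE6.5.1} we have $\epsilon(g)=1$ and $\epsilon(y)=0$. Since $g$ is group-like, $g^n$ is a group-like element of $H_0=k[x]$; as the only group-like element of the polynomial Hopf algebra $k[x]$ (with $x$ primitive) is $1$, we obtain $g^n=1$. Applying Lemma \ref{yysec6.2} to $u_{01}$ and $u_{11}$ gives the commutation relation $yg=\xi gy$, where $\xi$ is a primitive $n$th root of unity. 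Finally, Theorem \ref{yysec6.7} expresses every free-basis element $u_{ij}$ as a monomial in $g$ and $y$, so once $H_0$ is located inside $k\langle g,y\rangle$ we will have $H=k\langle g,y\rangle$.

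The key step is to show that $y^n$ is a nonzero primitive element, and so generates $H_0$. Working in $H\otimes H$, the two legs $y\otimes g$ and $1\otimes y$ of $\Delta(y)$ $\xi$-commute (again by Lemma \ref{yysec6.2}, using $g^n=1$), so the $q$-binomial theorem \cite[I.6.1]{BG2} yields $\Delta(y)^n=(y\otimes g)^n+(1\otimes y)^n=y^n\otimes 1+1\otimes y^n$, the intermediate $q$-binomial coefficients vanishing because $\xi$ is a primitive $n$th root of unity. Hence $y^n\in H_0$ is primitive, and since $H_0\cong k[x]$ has a one-dimensional space of primitives, $y^n$ is a scalar multiple of the primitive generator of $H_0$. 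That scalar is nonzero: $y=u_{01}$ lies in $H^l_0$, which is a Dedekind domain by Hypotheses \ref{yysec2.5}(c) and hence has no nonzero nilpotent, so $y^n\neq 0$. After rescaling the generator of $H_0$ we may therefore assume $H_0=k[y^n]$, so that $H=k\langle g,y\rangle$ is free of rank $n^2$ over $k[y^n]$ on the basis $\{g^ay^b:0\le a,b\le n-1\}$ supplied by Theorem \ref{yysec6.7}.

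It then remains to match $H$ with the Taft algebra. I would define $\phi:H(n,1,\xi)\to H$ on the generators $x,g$ of Example \ref{yysec3.3} by $g\mapsto g$ and $x\mapsto y$. The relations $g^n=1$ and $yg=\xi gy$ make $\phi$ a well-defined algebra homomorphism, and the computations above show that $\phi$ respects $\Delta$ and $\epsilon$ (and hence the antipode), so $\phi$ is a homomorphism of Hopf algebras. It is surjective because $g$ and $y$ generate $H$; and it carries the central polynomial generator $x^n$ of $H(n,1,\xi)$ to $y^n$, hence restricts to an isomorphism $Z(H(n,1,\xi))=k[x^n]\to k[y^n]=H_0$ of polynomial rings and sends the free $k[x^n]$-basis $\{g^ax^b\}$ onto the free $H_0$-basis $\{g^ay^b\}$, which forces $\phi$ to be bijective. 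Therefore $H\cong H(n,1,\xi)$, a Taft algebra with $\gcd(1,n)=1$, as required. The one genuinely delicate point is the nonvanishing of $y^n$: all the remaining structure is forced by the presentation assembled in Theorem \ref{yysec6.7}, whereas $y^n\neq 0$ is precisely what rules out $H$ being a proper, finite-dimensional Hopf quotient of the Taft algebra, and it is secured by the domain property of the Dedekind ring $H^l_0$.
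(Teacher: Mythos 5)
Your proof is correct and follows essentially the same route as the paper's: take $g=u_{11}$ and $y=u_{01}$ from Proposition \ref{yysec6.7}, deduce $g^n=1$ from the absence of nontrivial group-likes in $k[x]$, use Lemma \ref{yysec6.2} and the quantum binomial theorem to show $y^n$ is primitive and hence a nonzero multiple of $x$, and finish with a surjective Hopf algebra map from a Taft algebra. The only (welcome) refinements are that you justify $y^n\neq 0$ explicitly via the domain $H^l_0$ -- a point the paper leaves implicit -- and that you obtain injectivity of $\phi$ by matching free bases over $k[x^n]\cong H_0$, where the paper instead invokes that both algebras are prime of GK-dimension one.
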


\begin{proof} By Proposition \ref{yysec6.7}, $g:=u_{11}$
is group-like and $y:=u_{01}$ is $(g,1)$-primitive. Since the only
group-like in $H_0$ is 1, $g^n=1$. By Lemma (\ref{yysec6.2}) and the
quantum binomial theorem, $y^n$ is primitive. Up to a nonzero scalar
the only primitive element in $H_0$ is $x$, so we may assume that
$y^n=x$. It then follows that there is a Hopf algebra homomorphism
$\phi: H(n,t,\xi)\to H$. By Theorem \ref{yysec6.7}, $H$ is generated by
$g$ and $y$, both in the image of $\phi$, and hence $\phi$ is
surjective. Since both $H(n,t,\xi)$ and $H$ are prime of GK-dimension 1,
$\phi$ is an isomorphism.
\end{proof}

\subsection{Group-like case}
\label{yysec6.9}

\begin{proposition}
Assume that $(H,\pi)$ satisfies Hypotheses \ref{yysec2.5} and
\eqref{yyE6.1.1}. Suppose that $H$ is group-like - that is, the Hopf
subalgebra $H_0 = Z(H)$ is a Laurent polynomial algebra $k[x^{\pm
1}].$ Then, for appropriate $\xi$ and $w$, $H$ is isomorphic as a
Hopf algebra to one of the algebras $B(n,w,\xi)$ of Example
\ref{yysec3.4}.
\end{proposition}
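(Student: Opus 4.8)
The plan is to follow the template of the primitive case, Proposition \ref{yysec6.8}, now matching the target algebra $B(n,w,\xi)$ of Example \ref{yysec3.4} rather than a Taft algebra. By Proposition \ref{yysec6.7} and Theorem \ref{yysec6.7} I may take $g := u_{11}$ to be group-like, $y := u_{01}$ to be $(g,1)$-primitive, and $H$ to be generated as a $k$-algebra by $g$, $y$ and $x^{\pm 1}$; moreover, by Lemma \ref{yysec6.2}, $yg = \xi gy$ for a primitive $n$th root of unity $\xi$. The first task is to pin down $g^n$ and $y^n$. Since $g$ is group-like and $g^n$ lies in the degree-$(0,0)$ component $H_{00} = H_0 = k[x^{\pm 1}]$, while the only group-like elements of $k[x^{\pm 1}]$ are the powers of $x$, I get $g^n = x^j$ for some integer $j$.

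Next I would compute $y^n$. Exactly as in Proposition \ref{yysec6.8}, the relation $yg = \xi gy$ together with the quantum binomial theorem gives $\Delta(y^n) = \Delta(y)^n = y^n \otimes g^n + 1 \otimes y^n$, so $y^n$ is an $(x^j,1)$-primitive element lying in $H_0 = k[x^{\pm 1}]$. A direct computation shows that, for $j \neq 0$, the space of $(x^j,1)$-primitive elements of $k[x^{\pm 1}]$ is exactly $k(1 - x^j)$; hence $y^n = \lambda(1-x^j)$ for some scalar $\lambda$. I must rule out $j = 0$: if $g^n = 1$ then $y^n$ would be a genuine primitive element of $k[x^{\pm 1}]$ and so would vanish, contradicting the fact that the homogeneous element $y$ is regular (Proposition \ref{yysec5.1}(a)); the same regularity forces $\lambda \neq 0$. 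Thus $j \neq 0$. Replacing $x$ by $x^{-1}$ if necessary I may assume $j > 0$, and rescaling $y$ by a scalar $\mu$ with $\mu^n \lambda = 1$ (which preserves both $yg = \xi gy$ and the $(g,1)$-primitivity of $y$) I may arrange that $y^n = 1 - x^j = 1 - g^n$. Set $w := j$.

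At this stage $H$ is generated by $x^{\pm 1}, g, y$ subject to precisely the defining relations \eqref{yyE3.4.12} of $B(n,w,\xi)$, with the same coalgebra data ($x$ and $g$ group-like, $y$ being $(g,1)$-primitive). I would therefore define a Hopf-algebra homomorphism $\phi : B(n,w,\xi) \to H$ sending the generators $\bar x, \bar g, \bar y$ to $x, g, y$; the relations just verified show $\phi$ is well defined, and it is a bialgebra (hence Hopf) map because it carries group-likes to group-likes and the skew-primitive $\bar y$ to the skew-primitive $y$. Since $g, y, x^{\pm 1}$ generate $H$, the map $\phi$ is surjective, and, both $B(n,w,\xi)$ and $H$ being prime of GK-dimension one, $\phi$ is an isomorphism — exactly the concluding argument of Proposition \ref{yysec6.8}.

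The main obstacle I anticipate is the identification of $y^n$: both the primitivity calculation in $k[x^{\pm 1}]$ (classifying the $(x^j,1)$-primitives) and the normalizations needed to reach the clean relation $y^n = 1 - x^w$ require care, as does the check that the resulting triple $(n,w,\xi)$ genuinely names an algebra $B(n,w,\xi)$ of Example \ref{yysec3.4}. For the latter I would note that, for fixed $(n,w)$, the construction in Example \ref{yysec3.4} produces $B(n,w,\xi)$ for every primitive $n$th root $\xi$, since the exponent map $\theta \mapsto \theta^{w'+n'i_0}$ is a bijection of the set of primitive $n$th roots; regularity of $H$ then forces $w$ to be a unit in $k$, consistent with Theorem \ref{yysec3.4}(e).
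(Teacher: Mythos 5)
Your proposal is correct and follows essentially the same route as the paper's own proof: identify $g^n = x^w$ via the grading, exclude $w=0$ using the absence of nonzero primitives in $k[x^{\pm 1}]$ together with regularity of the homogeneous element $y$, identify $y^n$ as a $(g^n,1)$-primitive and normalize to $y^n = 1-x^w$, then produce a surjective Hopf map $B(n,w,\xi)\to H$ which is an isomorphism since both algebras are prime of GK-dimension one. Your additional details (the explicit classification of $(x^j,1)$-primitives in $k[x^{\pm 1}]$, the regularity justification that $y^n\neq 0$, and the remark that every primitive $n$th root $\xi$ is realized by the construction of Example \ref{yysec3.4}) are elaborations of steps the paper leaves implicit, not a different argument.
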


\begin{proof} By Proposition \ref{yysec6.7}, $g:=u_{11}$
is group-like and $y:=u_{01}$ is $(g,1)$-primitive, and these
elements together with $x$ generate $H$ as a $k-$algebra by Theorem
\ref{yysec6.7}. By the $C_n \times C_n -$grading of $H$, $g^n$ is a
group-like element in $H_0= H_{00},$ and so there is a $w\geq 0$
such that $g^n=x^w ,$ (noting that we can replace $x$ by $x^{-1}$
if $w$ is negative).

Suppose for a contradiction that $w=0$. Then $g^n=1$ and hence
$y^n$ is a primitive element as in Proposition \ref{yysec6.8}. But
$H_0$ has no primitive elements, so we have a contradiction.
Therefore we can assume that $w>0$.

By Lemma \ref{yysec6.2}, $yg=\zeta^a gy$ where $\gcd(a,n)=1$. This
implies that $y^n$ is a $(g^n,1)$-primitive element in $H_0.$
Therefore, after dividing if necessary by a non-zero scalar,
$$y^n=1-g^n.$$ Therefore, in view of Theorem \ref{yysec3.4}(c),
there is a Hopf algebra homomorphism
$\phi:B(n,\xi,w,i_0)\to H$ given by $\phi: x\to x, y\to y, g\to g$
where $\xi=\zeta^a$.
Since $H$ is generated by $x^{\pm 1}, g, y$, $\phi$ is surjective.
However, both $B$ and $H$ are prime of GK-dimension 1, so $\phi$ is
an isomorphism.
\end{proof}

The Main theorem \ref{yysec0.5} follows from Theorem \ref{yysec4.1}
and Propositions \ref{yysec6.8} and \ref{yysec6.9}. The statement
in the abstract is an immediate consequence of the Main Theorem.

\begin{corollary}
Let $k$ be an algebraically closed field of characteristic zero. Let
$H$ be an affine prime regular Hopf algebra of Gelfand-Kirillov
dimension one. If the PI-degree of $H$ is $1$ or prime, then $H$ is
isomorphic to one of the examples listed in \S \ref{yysec3}.
\end{corollary}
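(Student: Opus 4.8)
The plan is to obtain the corollary as an immediate consequence of the Main Theorem \ref{yysec0.5}, the only extra ingredient being the elementary numerical relationship among $\PIdeg(H)$, the integral order $io(H)$ and the integral minor $im(H)$. First I would note that an affine prime regular Hopf algebra of GK-dimension one automatically satisfies hypotheses $(\mathbf{H})$, so that Theorem \ref{yysec2.5} is available; in particular its part (e) gives $\PIdeg(H)=io(H)=:n$. Next, recall from Definition \ref{yysec2.4} that $im(H)=|G^l_\pi:G^l_\pi\cap G^r_\pi|$, while $|G^l_\pi|=io(H)=n$ by Lemma \ref{yysec2.1}(a). Since $G^l_\pi\cap G^r_\pi$ is a subgroup of $G^l_\pi$, Lagrange's theorem shows that $im(H)$ is a divisor of $n$.

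The argument then splits according to the hypothesis on the PI-degree. If $\PIdeg(H)=1$, then $n=io(H)=1$, and therefore $im(H)=1$ as well; this case is exactly Proposition \ref{yysec3.1}, which identifies $H$ with $A_1=k[X]$ or $A_2=k[X^{\pm1}]$ from \S\ref{yysec3.1}. If instead $\PIdeg(H)=n$ is prime, then the divisor $im(H)$ of $n$ must equal either $1$ or $n$; in other words $im(H)\in\{1,io(H)\}$. This is precisely the hypothesis required by Theorem \ref{yysec0.5}, which then yields that $H$ is isomorphic as a Hopf algebra to one of the algebras catalogued in \S\ref{yysec3}. Assembling the two cases gives the corollary.

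I do not expect any genuine obstacle here, since the full force of the classification is already packaged in Theorem \ref{yysec0.5}; the sole new observation is the elementary fact that a positive divisor of a prime is $1$ or the prime itself. The one point meriting a moment's care is the justification that $H$ satisfies $(\mathbf{H})$, and hence that the equality $io(H)=\PIdeg(H)$ of Theorem \ref{yysec2.5}(e) may legitimately be invoked; but this follows at once from the standing assumptions (affine, prime, regular, of GK-dimension one) together with the fact that $\ch k=0$, so that the coprimality condition on $n$ built into $(\mathbf{H})$ holds vacuously.
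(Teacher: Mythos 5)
Your proposal is correct and follows essentially the same route as the paper: identify $\PIdeg(H)=io(H)=n$ via Theorem \ref{yysec2.5}(e), observe that $im(H)$ divides $n$ so it must be $1$ or $n$ when $n$ is $1$ or prime, and invoke the Main Theorem (with the $n=1$ case covered by Proposition \ref{yysec3.1}, exactly as the paper arranges it). The extra care you take in verifying hypotheses $(\mathbf{H})$ and the vacuity of the coprimality condition in characteristic zero is sound but does not change the argument.
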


\begin{proof} Since $n:=io(H)$ is equal to the PI-degree of $H$,
$n$ is either $1$ or prime. Since $im(H)$ is a divisor of $io(H)$,
$im(H)$ must be either $1$ or $n$. The result follows from the
Main Theorem.
\end{proof}

\section{Questions and conjectures} \label{yysec7}
\subsection{}
\label{yysec7.1} The most obvious question arising from
this paper, one which we hope to address in the future, is:

\begin{question} Is Theorem \ref{yysec0.5} valid without the
hypothesis that $im(H)$ is 1 or $io(H)$?
\end{question}

\subsection{The hypotheses - prime versus semiprime}
\label{yysec7.2}
Suppose $H$ is a \emph{semiprime} regular Hopf algebra of GK-dimension 1
over an algebraically closed field $k$ of characteristic 0. By
\cite[Theorem A]{BG1} $H$ is a finite direct sum of prime algebras,
each of them having GK-dimension 1 because of the Cohen-Macaulay
property. It thus seems reasonable to ask:

\begin{question} What are the Hopf algebras $H$ which satisfy
($\mathbf{H}$), but with prime weakened to semiprime?
\end{question}

Of course, it may be easier to tackle this question first in the
case where $im(H)$ is 1 or $io(H).$

Since $H$ is a direct sum of prime rings, there is exactly one
minimal prime ideal of $H$ - let's call it $P$ - contained in
$\ker \epsilon .$ It is shown in \cite[Theorem 6.5 and Lemma
5.5]{LWZ} that $\overline{H} := H/P$ is a Hopf algebra, and indeed
satisfies all the hypotheses ($\mathbf{H}$). Moreover $io(H) =
io(\overline{H})$ and $im(H) = im(\overline{H}).$ It's conjectured
in \cite[remark 6.6]{LWZ} that there is a short exact sequence of
Hopf algebras
$$ 0 \longrightarrow K \longrightarrow H \longrightarrow
\overline{H} \longrightarrow 0. $$ Here, $K= H^{co \overline{H}},$
the coinvariant subalgebra, and $K$ is conjecturally a normal finite
dimensional Hopf subalgebra of $H$; if this is true, then it's easy
to see that $K$ must be semisimple artinian.

\subsection{The hypotheses of the main theorem}
\label{yysec7.3}
(i) \textbf{The characteristic of} $k$: The assumption
that $k$ has characteristic 0 is needed to ensure that $io(H)$ is
a unit. This hypothesis is not a consequence of the others in
(\textbf{H}), and without it the theorem is false, as is shown by
the example in Remarks (\ref{yysec2.3}) B(a). We can thus ask:

\begin{question} What are the noetherian prime affine Hopf $k$-algebras $H$
over an algebraically closed field $k$, with $\mathrm{GKdim} H =
\mathrm{gldim} H = 1$?
\end{question}

(ii) \textbf{The algebraic closure of }$k$: When $k$ is not
algebraically closed there are in general other 1-dimensional
commutative Hopf $k$-domains, besides $k[x]$ and $k[x^{\pm 1}],$
\cite[8.3]{LWZ}. Therefore we ask:

\begin{question} How should the main theorem be amended if $k$ is not
algebraically closed?\end{question}

(iii) \textbf{Regularity of }$H$: The non-regular generalised Liu
algebras found in (\ref{yysec3.4}) demonstrate that the remaining
hypotheses in (\textbf{H}) do not imply that $H$ has finite global
dimension. Here is another example.

\begin{example}
Let $n, p_0, p_1, \cdots, p_2$ be positive integers and 
$q\in k^{\times}$ with the following properties:
\begin{enumerate}
\item
$s\geq 2$ and $1<p_1 < p_2 < \cdots < p_s$;
\item
$p_0\mid n$ and $p_0, p_1,\cdots, p_s$ are pairwise relatively
prime;
\item
$q$ is a primitive $\ell$-th root of unity, where
$\ell= (n/p_0) p_1 p_2 \cdots p_s$.
\end{enumerate}

Set $m_i=p_i^{-1} \prod_{j=1}^s p_j$ for $i=1, \cdots, s$. Let
$A$ be the subalgebra of $k[y]$ generated by $y_i:= y^{m_i}$
for $i=1,\cdots, s$. The $k$-algebra automorphism 
of $k[y]$ sending $y\mapsto qy$ restricts to an algebra 
automorphism of $A$. There is a unique Hopf algebra 
structure on the skew Laurent polynomial ring 
$B=A[x^{\pm 1}; \sigma]$ such that $x$ is group-like and 
the $y_i$ are skew primitive, with 
$$\Delta(y_i)=y_i\otimes 1+x^{m_i n}\otimes y_i$$
for $i=1,\cdots, s$. It was checked in \cite[Construction 1.2]{GZ}
that this is a Hopf PI domain of GK-dimension two, and is denoted 
by $B(n,p_0,p_1, \cdots,p_s, q)$.

The Hopf algebra $B$ has infinite global dimension, therefore 
provides a negative answer to \cite[Question K]{Bro} 
(see \cite[Remark 1.6]{GZ}).

Now let $H$ be the quotient algebra $B/(x^{\ell}-1)$ where 
$\ell= (n/p_0) p_1 p_2 \cdots p_s$. Since group-like elements 
$x^{\pm \ell}$ generate a central Hopf subalgebra of $B$, 
$(x^{\ell}-1)$ is a Hopf ideal and hence $H$ is a 
quotient Hopf algebra of $B$. 

We claim that $H$ is prime of Gelfand-Kirillov dimension one, but 
not regular. As an algebra $H$ is a 
skew group ring $A\ast {\mathbb Z}/(\ell)$, whence is semiprime 
by \cite[Proposition 7.4.8(1)]{Mo}. Since $\ell$ is the order of $q$, 
the center of $H$ is $k[y^\ell]$ which is a domain. Hence
$H$ is prime. Since $A$ is not regular and $A\ast Z/(\ell)$ is
strongly ${\mathbb Z}/(\ell)$-graded, $A\ast {\mathbb Z}/(\ell)$ is  
not regular. Finally $\GKdim H=\GKdim A=1$.
\end{example}

Up to date, we do not know other examples of non-regular prime
Hopf algebras of GK-dimension one. Hence we should ask:

\begin{question} What other Hopf algebras have to be included if the
regularity hypothesis is dropped from the main theorem?
\end{question}

\subsection{GK-dimension greater than one} 
\label{GK} Beyond
GK-dimension one, there is no doubt that many classes of examples
remain to be discovered. Of course not every noetherian Hopf algebra
of GK-dimension 2 satisfies a polynomial identity: consider the
enveloping algebra of the 2-dimensional non-abelian Lie algebra,
over a field of characteristic 0. A few new examples of 
noetherian Hopf algebras are given in \cite{GZ}. However, simply 
because we don't know many examples, we ask:

\begin{question} Does every noetherian Hopf $k-$algebra of GK-dimension
2 satisfy a polynomial identity if $k$ has positive characteristic?
\end{question}

In arbitrary characteristic, one could start by considering the
Hopf algebra case of Small's famous question

\begin{question} Is every noetherian prime Hopf $k-$algebra of GK-dimension
2 either PI or primitive?
\end{question}

Simply for completeness and to emphasis the fact that much
remains to be understood about the classical components, we
restate the question from Remark (\ref{yysec2.3})B(b):

\begin{question} If $A$ is an AS-Gorenstein noetherian Hopf algebra
with a bijective antipode, is the center of $A$ contained in its
classical component?
\end{question}

\section*{Acknowledgments}
Both authors are supported in part by Leverhulme Research
Interchange Grant F/00158/X (UK). J.J. Zhang is supported by a the
US National Science Foundation. We thank S. Gelaki for helpful
advice.

\providecommand{\bysame}{\leavevmode\hbox to3em{\hrulefill}\thinspace}
\providecommand{\MR}{\relax\ifhmode\unskip\space\fi MR }
\providecommand{\MRhref}[2]{%

\href{http://www.ams.org/mathscinet-getitem?mr=#1}{#2} }
\providecommand{\href}[2]{#2}


\begin{thebibliography}{10}

\bibitem[Bra]{Bra}
A. Braun, An additivity principle for p.i. rings, J. Algebra {\bf
96} (1985), 433–441.

\bibitem[Bro]{Bro}
K. A. Brown, 
Noetherian Hopf algebras. Turkish J. Math. {\bf 31} (2007), 
suppl., 7--23. 

\bibitem[BG1]{BG1}
K. A. Brown and K. R. Goodearl,
Homological aspects of Noetherian PI Hopf algebras and
irreducible modules of maximal dimension,
J. Algebra {\bf 198} (1997), 240--265.

\bibitem[BG2]{BG2}
K. A. Brown and K. R. Goodearl,
Lectures on Algebraic Quantum Groups, Birkh{\"a}user, 2002.


\bibitem[BZ]{BZ}
K.A. Brown and J.J. Zhang, Dualising complexes and twisted
Hochschild (co)homology for noetherian Hopf algebras, J. Algebra,
{\bf 320} (2008), 1814-1850.

\bibitem[GL]{GL}
S. Gelaki and E.S. Letzter, An affine PI Hopf algebra not finite
over a normal commutative Hopf subalgebra, Proc. Amer. Math. Soc.
{\bf 131} (2003), 2673--2679


\bibitem[GS]{GS}
K.R. Goodearl and L.W. Small,
 Krull versus global dimension in
Noetherian P.I. rings,
 Proc. Amer. Math. Soc. {\bf 92} (1984),
175--178.

\bibitem[GW]{GW}
K.R. Goodearl and  R.B. Warfield Jr.,
``An introduction to noncommutative Noetherian rings'',
Second edition. London Mathematical Society Student
Texts, 61. Cambridge University Press, Cambridge, 2004.

\bibitem[GZ]{GZ}
K.R. Goodearl and J.J. Zhang,
Noetherian Hopf algebra domains of Gelfand-Kirillov
dimension two, in preparation. 

\bibitem[Hu]{Hu}
J.E. Humphreys,
``Linear algebraic groups'',
Graduate Texts in Mathematics, No. 21.
Springer-Verlag, New York-Heidelberg, 1975.

\bibitem[Liu]{Liu}
G.-X. Liu,
On noetherian affine prime regular Hopf algebras
of Gelfand-Kirillov dimension 1, Proc. Amer. Math. Soc.
(to appear).

\bibitem[LL]{LL}
M.E. Lorenz and M.Lorenz,
On crossed products of Hopf algebras,
Proc. Amer. Math. Soc. {\bf 123} (1995), no. 1, 33--38.


\bibitem[LWZ]{LWZ}
D.-M. Lu, Q.-S. Wu and J.J. Zhang,
Homological integral of Hopf algebras,
Trans. Amer. Math. Soc. {\bf 359} (2007), no. 10, 4945--4975.


\bibitem[MR]{MR}
J. C. McConnell and J. C . Robson, Noncommutative Noetherian Rings,
Wiley, Chichester, 1987.

\bibitem[Mo]{Mo}
S. Montgomery, Hopf Algebras and their Actions on Rings, CBMS
Regional Conference Series in Mathematics, {\bf 82}, Providence, RI,
1993.


\bibitem[Pa]{Pa}
D.S. Passman,
The Algebraic Structure of Group Rings,
Reprint of the 1977 original.
Robert E. Krieger Publishing
Co., Inc., Melbourne, FL, 1985.

\bibitem[Pa2]{Pa2}
D.S. Passman,
Infinite Crossed Products,
Academic Press, San Diego, 1989.

\bibitem[Sc]{Sc}
H.-J. Schneider, Principal homogeneous spaces for arbitrary Hopf
algebras, Israel J. Math. {\bf 72} (1990),167-195.

\bibitem[Sk]{Sk}
S. Skryabin,
New results on the bijectivity of antipode
of a Hopf algebra, Journal of Algebra,
J. Algebra {\bf 306} (2006), no. 2, 622-633.

\bibitem[SSW]{SSW}
L. W. Small, J.T. Stafford and R. B. Warfield Jr.,
Affine algebras of Gelfand-Kirillov dimension one are PI,
Math. Proc. Cambridge Philos. Soc.
{\bf 97} (1985), no. 3, 407--414.

\bibitem[Sw]{Sw}
M. E. Sweedler, Hopf Algebras, Graduate Texts in Mathematics,
benjamin, New York, 1969.

\bibitem[V]{V}
P. Vamos, On the minimal prime ideal of the tensor product of two
fields, Math. Proc. Cambridge Philos. Soc. {\bf 84} (1978), no. 1,
25-35.

\bibitem[Wa]{Wa}
W.C. Waterhouse,
Introduction to Affine Group Schemes, Graduate
Texts in Mathematics, 66, Springer, Berlin, 1979.


\bibitem[We]{We}
C.A. Weibel,
An Introduction to Homological Algebra, Cambridge
Studies in Advanced Mathematics, 38. Cambridge University Press,
Cambridge, 1994.


\bibitem[WZ1]{WZ1}
Q.-S. Wu and J.J. Zhang,
Noetherian PI Hopf algebras are Gorenstein,
Trans. Amer. Math. Soc.
{\bf 355} (2003), no. 3, 1043--1066.

\bibitem[WZ2]{WZ2}
Q.-S. Wu and J.J. Zhang, Regularity of involutary PI Hopf algebras,
J. Algebra {\bf 256} (2002),no.2,599-610.


\bibitem[Za]{Za}
A. Zaks, Injective dimension of semiprimary rings,
J. Algebra, {\bf 13}, (1969), 73-86.
\end{thebibliography}
\end{document}